\definecolor{ultramarine}{rgb}{0 , 0, 0.3}
\definecolor{bulgarianrose}{rgb}{0.3, 0, 0}
\numberwithin{equation}{section}
\newcommand{\myref}{/home/mojtaba/texmf/tex/latex/local/bibliography.bib}
\newcommand{\mojref}{\bibliography{\myref}%
\bibliographystyle{apalike}%
\end{document}%
}
\def\lcal{\mathcal{L}}
\def\lcalz{\lcal_{0}}
\def\lcalza{\lcalz(\abold)}
\def\lcalzvar{\lcalz(\varr)}
\def\lcalzpar{\lcalz(\parr)}
\def\kcal{\mathcal{K}}
\def\kcalh{\hat{\mathcal{K}}}
\def\IPC{{\sf IPC}}
\def\ipc{{\sf IPC}}
\def\HA{\hbox{\sf HA}{} }
\def\K4{\hbox{\sf K4}{} }
\newcommand{\ttbrace}[1]{[\![{#1}]\!]}
\newcommand{\tbnpr}[1]{\ttbrace{#1}_{_{\nnilpar}}}
\newcommand{\Ax}{\AxiomC}
\newcommand{\UI}{\UnaryInfC}
\newcommand{\BI}{\BinaryInfC}
\newcommand{\LLa}{\LeftLabel}
\newcommand{\RLa}{\RightLabel}
\newcommand{\DP}{\DisplayProof}
\newcommand{\uparan}[1]{\textup{(}#1\textup{)}}
\newcommand{\bs}[1]{\vv{#1}}
\def\xbold{{\bs{x}}}
\def\abold{{\bs{a}}}
\def\bbold{{\bs{b}}}
\def\ybold{{\bs{y}}}
\def\pbold{{\bs{p}}}
\def\qbold{{\bs{q}}}
\def\IPC{{\sf IPC}}
\def\HA{\hbox{\sf HA}{} }
\def\NNIL{{\sf N}}
\def\nnil{{\sf N}}
\def\K4{\hbox{\sf K4}{} }
\newcommand{\pres}[2]{\mathrel{\setlength{\unitlength}{1ex}
                    \begin{picture}(2,2)
                    \put(0,0){$\pr$}
                    \put(-.1,1.3){\fontsize{4}{0} \selectfont ${#1}$}
                    \put(-.1,-.8){\fontsize{4}{0} \selectfont ${#2}$}
                    \end{picture}}
                    }
\newcommand{\preslow}[2]{\mathrel{\setlength{\unitlength}{1ex}
                    \begin{picture}(2,2)
                    \put(0,0){$\pr$}
                    \put(-.1,1.3){\fontsize{4}{0} \selectfont ${#1}$}
                    \put(-.1,-1.2){\fontsize{4}{0} \selectfont ${#2}$}
                    \end{picture}}
                    }
\newcommand{\adsm}[2]{\mathrel{\setlength{\unitlength}{1ex}
                    \begin{picture}(2,2)
                    \put(0,0){$\ar$}
                    \put(-.3,1.2){\fontsize{4}{0} \selectfont ${#1}$}
                    \put(-.3,-.5){\fontsize{4}{0} \selectfont ${#2}$}
                    \end{picture}}
                    }
\newcommand{\vdashsub}[1]{\mathrel{\setlength{\unitlength}{1ex}
                    \begin{picture}(2,2)
                    \put(0,0){$\vdash$}
                    \put(0.1,1.2){\fontsize{4}{0} \selectfont 
                    $ $}
                    \put(0.1,-.4){\fontsize{4}{0} \selectfont ${#1}$}
                    \end{picture}}
                    }                                                  
\def\pr{\mid\!\approx}
\def\prtgp{\mathrel{ ^{^*}\!\!\!\prtg}}
\newcommand{\submodel}[1]{\preceq^{{#1}}}
\newcommand{\submodelp}[1]{\preceq_1^{{#1}}}
\newcommand{\submodeli}[1]{\subseteq_{}^{{#1}}}
\def\prtdg{\pres{\sft}{\dgamma}}
\def\prtgv{\pres{\sft}{\Gamma^\vee}}
\def\prtdgv{\pres{\sft}{\dgamma^{\!\vee}}}
\def\prpnpar{\mathrel{\pres{}{\pnnilpar}\ \ \ }}
\def\prnpar{\mathrel{\pres{}{\nnilpar}\ \ \ }}
\def\pce{\preccurlyeq}
\def\sce{\succcurlyeq}
\newcommand{\V}{\mathrel{V}}
\newcommand{\ra }{\rightarrow }
\newcommand{\lr}{\leftrightarrow}
\newtheorem{theorem}{Theorem}[section]
\newtheorem{question}{Question}
\newaliascnt{lemma}{theorem}  
\newtheorem{lemma}[lemma]{Lemma}  
\newaliascnt{definition}{theorem}  
\newtheorem{definition}[definition]{Definition}  
\newaliascnt{corollary}{theorem}  
\newtheorem{corollary}[corollary]{Corollary}  
\newaliascnt{proposition}{theorem}  
\newaliascnt{remark}{theorem}  
\newtheorem{remark}[remark]{Remark}  
\newaliascnt{notation}{theorem}  
\newaliascnt{example}{theorem}  
\newtheorem{example}[example]{Example}  
\newaliascnt{conjecture}{theorem}  
\newaliascnt{fact}{theorem}  
\newaliascnt{claim}{theorem}  
\newcommand{\sub}[1]{{\sf sub}(#1)}
\def\scrk{\mathscr{C}}
\def\scrf{\mathscr{F}}
\def\scrkn{\langle\scrk\rangle_n}
\newcommand{\Mod}[1]{\textup{Mod}(#1)}
\def\txa{\theta^\xbold_{\!\!A}}
\newcommand{\ta}[1]{{\theta^{#1}_{\!\!A}}}
\def\NNILa{\NNIL(\abold)}
\def\nnilpar{{\nnil(\parr)}}
\def\NNILpar{{\NNIL(\parr)}}
\def\NNILp{{\NNIL(\pbold)}}
\def\ex{\exists\,}
\def\fa{\forall\,}
\def\mipc{{modulo $\IPC$-provable equivalence}{} }
\newcommand{\charac}[2]{{\ttbrace{#1}_{#2}}}
\newcommand{\charn}[1]{\charac{#1}{n}}
\def\charkn{\charn{\kcal}}
\def\bat{B^{A'}_\theta}
\def\ar{            \mathrel{\setlength{\unitlength}{1ex}
                    \begin{picture}(1.65,1.65)           
                    \put(0,0){\line(0,1){1.65}}                  
                    \put(0,.3){\textup{\footnotesize{$\sim$}}}
                    \end{picture}}\!
        }
\def\arn{\mathrel{\adsm{}{\nnilpar}\ \ }} 
\def\arpn{\mathrel{\adsm{}{\pnnil}}}
\def\arg{\mathrel{\adsm{}{\Gamma}}}
\def\argt{\mathrel{\adsm{\sft}{\Gamma}}}
\def\prtg{\mathrel{\pres{\sft}{\Gamma}}}
\def\prg{\mathrel{\pres{}{\Gamma}}}
\def\artg{\argt}
\def\artgv{\mathrel{\adsm{\sft}{\Gamma^\vee}}\ }
\def\ARN{{{\sf AR}_{\parr}}\!\,}
\def\ARNN{{{\sf AR}^+_{\parr}}\!\,}
\def\BART{\BAR\sft}
\newcommand{\BAR}[1]{[#1]}
\def\parb{\ninetaghir{\sf par}}
\def\AR{{\sf AR}}
\def\VAR{{\sf V}^\parr_{\!\text{\fontsize{6}{0}\selectfont\AR}}}
\def\VARBP{{\sf V}'_{\!\text{\fontsize{6}{0}\selectfont\AR}}}
\def\VARBPP{{\sf V}''_{\!\text{\fontsize{6}{0}\selectfont\AR}}}
\newcommand{\itp}[2]{\itv{#1}{#2}{\parr}}
\newcommand{\ita}[2]{\itv{#1}{#2}{\parb}}
\newcommand{\itap}[2]{\itvp{#1}{#2}{\parb}}
\newcommand{\itapp}[2]{\itvpp{#1}{#2}{\parb}}
\newcommand{\itv}[3]{({#1}\xrightarrow{#3}{#2})}
\newcommand{\itvp}[3]{({#1}\mathrel{\xrightarrow{#3}{\!'}}{#2})}
\newcommand{\itvpp}[3]{({#1}\mathrel{\xrightarrow{#3}{\!''}}{#2})}
\newcommand{\nita}[2]{\nitv{#1}{#2}{\parb}}
\newcommand{\nitap}[2]{\nitvp{#1}{#2}{\parb}}
\newcommand{\nitapp}[2]{\nitvpp{#1}{#2}{\parb}}
\newcommand{\nitv}[3]{{#1}\xrightarrow{#3}{#2}}
\newcommand{\nitvp}[3]{{#1}\mathrel{\xrightarrow{#3}{\!'}}{#2}}
\newcommand{\nitvpp}[3]{{#1}\mathrel{\xrightarrow{#3}{\!''}}{#2}}
\def\eqp{\equiv_\parr}
\def\eqg{\equiv_\Gamma}
\def\mont{\textup{Mont}}
\def\montd{\mont(\Delta)\xspace}
\def\what{\hat{w}}
\def\pnnil{{\sf PN}}\def\pNNIL{{\sf P}\NNIL}
\def\pNNILp{\pNNIL(\pbold)}
\def\pnnilpar{\pnnil(\parr)}\def\pNNILpar{\pNNIL(\parr)}
\newcommand{\vdashm}{\Vdash^{^{\!\!-}}}
\newcommand{\nin}{\not\in}
\newcommand\xtwoheadrightarrow[2]{\ensurestackMath{\mathrel{%
			\stackengine{1pt}{%
				\stackengine{0pt}{\twoheadrightarrow}{\scriptstyle#2}{O}{c}{F}{F}{S}%
			}{\scriptstyle#1}{U}{c}{F}{F}{S}%
}}}
\newcommand\xrightarrowtail[2]{\ensurestackMath{\mathrel{%
			\stackengine{1pt}{%
				\stackengine{0pt}{{\rightarrowtail}\!\!\!\!\!{\twoheadrightarrow}}{\scriptstyle#2}{O}{c}{F}{F}{S}%
			}{\scriptstyle#1}{U}{c}{F}{F}{S}%
}}}
\newcommand{\xrat}[2]{\xrightarrowtail{#2}{#1}}
\newcommand{\xratth}{\xrightarrowtail{}{\theta}}
\newcommand{\xtra}[2]{\xtwoheadrightarrow{#2}{#1}}
\newcommand{\xtrath}{\xtwoheadrightarrow{}{\theta}}
\def\sft{{\varLambda}}
\def\sfn{{\sf N}}
\def\vdasht{\vdashsub\sft}
\def\tvdash{\sft\vdash}
\def\VA{{\sf sub}}
\def\cto{c_{\!_\to\!}}
\newcommand{\suba}[1]{{\atom}(#1)}
\def\atom{{\sf atom}}
\newcommand{\varr}{{\sf var}}
\newcommand{\parr}{{\sf par}}
\def\NI{{\sf NI}}
\newcommand{\cfrak}[1]{\mathfrak{c}(#1)}
\newcommand{\cfrakz}[1]{\bar{\mathfrak{c}}(#1)}
\newcommand{\ofrak}[1]{\mathfrak{o}(#1)}
\newcommand{\ifrak}[1]{\bar{\mathfrak{i}}(#1)}
\newcommand{\Ifrak}[1]{{\mathfrak{I}}(#1)}
\newcommand{\vfrak}[1]{\mathfrak{v}(#1)}
\def\nat{\mathbb{N}}
\newcommand{\ap}[3]{\lfloor{#3}\rfloor^{^{\!#2}}_{_{\!{#1}}}}
\def\dgamma{{\dar{\,}\Gamma}}
\def\dtgamma{{{\downarrow}^{\!\sft}\Gamma}}
\def\dgv{{{\downarrow}\Gamma^\vee}}
\newcommand{\darrow}[1]{{\downarrow}{#1}}
\def\fsubeq{\subseteq_{\text{fin}}}
\newcommand{\dar}[1]{{\downarrow^{\!\!^{#1}}}}
\newcommand{\sixtaghir}[1]{{#1}}
\newcommand{\seventaghir}[1]{{#1}}
\newcommand{\eighttaghir}[1]{{#1}}
\newcommand{\ninetaghir}[1]{{#1}}
\newcommand{\tentaghir}[1]{{#1}}
\def\altNNILpar{\seventaghir{\NNIL}}
\def\altnnilpar{\altNNILpar}
\def\altdnnilpar{\seventaghir{{\dar{\,}\NNIL}}}
\def\altpnnilpar{\seventaghir{\pNNIL}}
\def\altpNNILpar{\seventaghir{\pNNIL}}
\def\altarpn{\mathrel{\adsm{}{\altpnnilpar}\ \ \ }}
\def\altdNNILpar{{\dar{\,}\altNNILpar}}
\def\altarn{\mathrel{\adsm{}{\altnnilpar}\!\!}} 
\def\altprdnpar{\mathrel{\pres{}{\altdnnilpar}}}
\def\altarpn{\mathrel{\adsm{}{\altpnnilpar}}}
\def\altdpnnilpar{{\dar{\,}\altpnnilpar}}
\def\altprdpnpar{\mathrel{\pres{}{\altdpnnilpar}\ }}
\newcommand{\basisworlds}[1]{{#1}^{\mathfrak{b}}}
\newcommand{\mspan}[2]{\bgroup\def\arraystretch{1}
\begin{tabular}{l} 
{#1}
\\ 
{#2}
\end{tabular}\egroup}
\begin{document}
\title{Relative unification in intuitionistic logic:\\
towards the provability logic of Heyting Arithmetic
}
\author{
Mojtaba Mojtahedi\thanks{
Department of Mathematics, Statistics and Computer Science, 
College of Sciences, University of Tehran, Iran;
and Department of Mathematics WE16, Ghent University. Krijgslaan 281-S8, B9000 Ghent, 
Belgium.  
Email: \url{mojtahedy@gmail.com}, Home page: \url{http://mmojtahedi.ir}
}
}
\maketitle
\begin{abstract}
This paper studies relative 
unification and admissibility in intuitionistic logic. 
We generalize results of
\citep{Ghil99,Iemhoff-admissibility} and prove them relative to 
$\altNNILpar$ formulae, the class of forlmulae with 
No Nested Implications to the Left. 
The main application of this generalization is to 
characterize the provability logic of Heyting Arithmetic $\HA$
and prove its decidability  \citep{PLHA}.
\end{abstract}
\tableofcontents

\section{Introduction}
Silvio Ghilardi \citep{Ghil99,Ghil2000modal} studies 
unification in propositional logics. More precisely, he 
describes all solutions for
$ A(x_1,\ldots,x_n)\lr \top$
with respect to a background logic like intuitionistic logic $\IPC$ or a modal logic containing ${\sf K4}$. By a solution we mean a substitution 
$\theta$ such that $\theta(A \lr \top)$ holds.

On the other hand, we have a related question for 
decidability/characterization 
of admissible rules of $\IPC$.
A rule $A/B$ \textit{is admissible for a logic} ${\sf L}$ if 
${\sf L}\vdash\theta(A)$ implies ${\sf L}\vdash \theta(B)$
for every substitution $\theta$. 
In contrast to the case of classical logic, 
in which every admissible rule is also derivable,
the cases of modal logic and intuitionistic logic are not trivial. 
Probably the first such underivable admissible rule for 
$\IPC$ is the following \citep{Harrop60}: 
\begin{center}
\Ax{$\neg A\to (B\vee C)$}
\UI{$(\neg A\to B)\vee(\neg A\to C)$}
\DP
\end{center}
Using the tools and results in \citep{Ghil99}, Rosalie Iemhoff
proves the completeness of a base for 
\textit{all} admissible rules of $\IPC$
\citep{IemhoffT,Iemhoff-admissibility}, which was previously conjectured by 
de Jongh and Visser. 
Decidability of admissibility for $\IPC$ was already known
\citep{Rybakov_1987,rybakov_1992,Rybakov_Book}. 
There are 
similar results for some modal logics extending  ${\sf K4}$ both for 
unification \citep{Ghil2000modal} and 
admissibility \citep{jevrabek2005admissible,iemhoff2009proof}.

There is yet another related notion, \textit{preservativity}, 
an intuitionistic alternate for the classical notion of interpretability
or conservativity \citep{Iemhoff.Preservativity,Visser02}.  
Preservativity is a binary relation $A\pres{\sft}{\Gamma}B$
defined as ``for every $E\in\Gamma$, 
$\tvdash E\to A$ implies 
$\tvdash E\to  B$".
Albert Visser in \citep{Visser02} shows that $\NNIL$-preservativity and admissibility are tightly related, in which $\NNIL$, is the class of No
Nested Implications to the Left, introduced in 
\citep{Visser-Benthem-NNIL} and elaborated further in \citep{Visser02}. 
This class of formulas  proves to be helpful in the realm of 
intuitionistic logic. A crucial
result concerning $\NNIL$ appeared in \citep{Visser02}. He provides
an algorithm that takes   $A\in\lcalz$ and returns its best 
$\NNIL$ approximation $A^*$ from below, i.e., $  \vdash A^*\ra A$ and
for all $\NNIL$ formulas $B$ such that $  \vdash B\ra A$, we have
$  \vdash B\ra A^*$. 
Later in \Cref{sec-pres-2} we also provide an algorithm 
which computes $A^\star$, the best $\NNILpar$-approximation of $A$ from below.

The main work of the current paper is to extend 
\citep{Ghil99,Iemhoff-admissibility} and prove their results relative in 
$\altNNILpar$ formulas, the class of No Nested Implications to the Left \citep{Visser-Benthem-NNIL} \seventaghir{in the parametric language}.
First we imitate  \citep{Ghil99} and study 
projectivity and extendibility  relative in $\altNNILpar$ (\Cref{Theorem-Ghil-Ext}). This 
will lead us to a relativised version of projective approximations
(\Cref{Theorem-IPC-nnilp-Finitary}). 
Then we take a route similar to \citep{Iemhoff-admissibility} and 
provide a base called $\ARN$, for $\altNNILpar$-admissibility of $\IPC$
and prove its completeness (\Cref{Characterization-admissibility}).
This last result together with \citep{Sigma.Prov.HA,mojtahedi2021hard},
 lead us to the characterization and decidability 
of  provability logic of Heyting arithmetic $\HA$, 
which is contained in  \citep{PLHA}.

Finally we axiomatize two interesting preservativity predicates
$\pres{\IPC}{\Gamma}\ $: first when $\Gamma$ is considered as the set of 
$\altNNILpar$-projective formulas (this is same as projectivity relative in $\altNNILpar$, as defined in \Cref{Gamma-proj-nonmodal}), and second when $\Gamma:=\NNILpar$.

\section{Preliminary definitions and facts}
This section is devoted to preliminaries and conventions.  
Among other well-known notions, we 
define $\NNIL$ formulas, admissibility, preservativity and greatest lower bounds.
\subsection{Propositional language}\label{lang}
The propositional  language  $ \lcalz $ includes connectives
$ \vee$, $ \wedge $, $ \to $ and $ \bot $. Negation $ \neg $ is defined 
as $ \neg A:= A\to\bot $ and $\top:=\neg\bot$. 
\tentaghir{We assume that $ \lcalz $ includes
a set of atomic variables $ \varr $ 
and also  a set of atomic parameters 
$ \parr $.}
The union $ \varr\cup\parr $ is 
annotated as $ \atom $, the set of atoms. 
\tentaghir{Unless said otherwise, we assume that the  $\atom$ is finite.}
We use $\pbold$ and $\qbold$ as a finite set or list of parameters
and $\xbold$ and $\ybold$ for a finite set or list of variables.  
Finite lists or sets of atoms are annotated by $\abold$ and $\bbold$.
We use $x$, $y$ and $z$ (possibly with subscripts) as  meta-variables for variables and also $p$, $q$ and $r$ (possibly with subscripts) for parameters. Also $a$, $b$ and $c$
(again possibly with subscripts) are used for both atomic variables and parameters.

Let $\bs{a}=a_1,\ldots,a_n$ be a list of atoms and 
$\bs{B}=B_1,\ldots,B_n$. Then $A[\bs{a}:\bs{B}]$ 
indicate the simultaneous substitution of 
$B_i$ for $a_i$ in $A$.

We also use the notation $ \lcalz(X) $ to indicate the language of all \sixtaghir{B}oolean combinations of formulas in $ X $. 
We use $ \IPC $ for 
intuitionistic propositional logic
\citep{TD} and 
$\vdash $ indicates derivability in $\IPC$. 
\subsection{Substitutions}
A substitution $\theta$ is a function on propositional  language $\lcalz$
which commutes with all connectives, i.e.
\begin{itemize}
\item $\theta(B\circ C)=\theta(B)\circ\theta(C)$ for every $\circ\in\{\vee,\wedge,\to\}$.
\item $\theta(\bot)=\bot$.
\end{itemize}
\textit{By default we assume that all substitutions are identity on the set 
$\parr$ of parameters.} We say that a substitution is \textit{general},
if we relax this condition on $\parr$ and allow the parameters to be substituted as well.

\sixtaghir{A propositional logic is a set 
$\sft\subseteq \lcalz$ containing all
theorems of intuitionistic logic and
closed under intuitionistic derivability
and general substitutions, 
i.e.~$A\in \sft$ implies $\theta(A)\in\sft$ for every general
substitution $\theta$ and 
$\sft\vdash A$ implies $A\in\sft$. 
We always reserve 
$\sft$ as a meta-variable for logics.
Furthermore, for a given logic $\sft$, we define $\Gamma\vdasht A$ as a shorthand for 
$\Gamma\cup\sft\vdash A$.}
\subsection{Kripke models for intuitionistic logic}
A Kripke model for intuitionistic logic, is a triple
$ \kcal=(W,\prec,\V) $ with following properties:
\begin{itemize}
	\item $ W\neq\emptyset $.
	\item $ (W, \prec) $ is a partial order (transitive and irreflexive). We write $ \pce $ for the reflexive closure of $ \prec $.
	\item $ \V $ is the valuation on atoms, 
	i.e.~$ V\subseteq W\times\atom  $.
	\item $ w\pce u $ and $ w\V a $ implies $ u\V a $ for every $ w,u\in W $ and $ a\in\atom $.
\end{itemize}
The valuation $ \V $ may be extended to include all  formulas as follows:
\begin{itemize}
	\item $ \kcal,w\Vdash a $ iff $ w\V a $, for $ a\in\atom $.
	\item $ \kcal,w\Vdash A\wedge B $ iff $ \kcal,w\Vdash A $ and 
	$ \kcal,w\Vdash B $.
	\item $ \kcal,w\Vdash A\vee B $ iff $ \kcal,w\Vdash A $ or 
	$ \kcal,w\Vdash B $.
	\item $ \kcal,w\Vdash A\to B $ iff for every $ u\sce w $ if we have 
	$ \kcal,w\Vdash A $ then
	$ \kcal,w\Vdash B $.
\end{itemize}
We also define the following notions for Kripke models:
\begin{itemize}
	\item \textit{Finite:} if $ W $ is a finite set. 
	\item \textit{Rooted:} if there is some node $ w_0\in W $
	such that $ w_0\pce w $ for every $ w\in W $.
	\item \textit{Tree:} if for every $ w\in W $ the set 
	$ \{u\in W: u\pce w\} $ is finite linearly  ordered 
	(by $ \pce $) set. 
\end{itemize}
By default we assume that \textit{all Kripke models of $\IPC$ in this paper 
are finite, rooted and tree.} As we will see in \Cref{sec-ARmod}, 
some other sort of Kripke semantics are used, called $\ARN$-models,
which might not be finite or tree.
Given $A\in\lcalz$, we define $\Mod{A}$ as the class of all 
(finite rooted tree) Kripke models of $A$. 
\subsection{No Nested Implications in the Left}\label{sec-NNIL-def}
The class of {\em No Nested Implications to the Left}, 
$\NNIL$ formulas, was discovered by Albert Visser and first 
published in
\citep{Visser-Benthem-NNIL}, and more explored in \citep{Visser02,NNIL-rev}. 
For simplicity of notations, we may write ${\sf N}$ for $\NNIL$.
The crucial
result of \citep{Visser02} is to provide
an algorithm that  
takes   $A\in\lcalz$ and returns its best 
$\NNIL$ approximation $A^*$ from below, i.e., $  \vdash A^*\ra A$ and
for all $\NNIL$ formulas $B$ such that $  \vdash B\ra A$, we have
$  \vdash B\ra A^*$. 
Later in this paper we define another algorithm $A^\star$
which calculates the best $\NNILpar$-approximation of $A$ from below
(\Cref{sec-pres-2}).
The classes $\NNIL$ and $ \NI $ of formulas 
in $\lcalz$ are defined inductively: 
\begin{itemize}
\item $ a\in\NNIL $ and $a\in\NI $ for every $ a\in\atom $.
\item $ B\circ C\in \NNIL $ if $ B,C\in\NNIL $. 
Also $ B\circ C\in \NI $ if $ B,C\in\NI $. ($ \circ\in\{\vee,\wedge\} $)
\item $ B\to C\in\NNIL $ if $ B\in\NI $ and $ C\in\NNIL $.
\end{itemize}
\subsection{Notations on sets of formulas}\label{notation-set}
In rest of the paper we deal with several sets of formulas and following notations  make life easier.
Given $A\in\lcalz$, let $ \sub{A} $ be the set of all subformulas of $ A $. 
For simplicity of notations, we write 
${ X_1\ldots X_n}$ for ${ X_1}\cap\ldots\cap{ X_n}$, when 
${ X_i}$  are sets of formulas. 
For a set $\Gamma$ of formulas define 
\begin{itemize}
	\item $\Gamma^\vee:=\{\bigvee \Delta:\Delta\fsubeq \Gamma
	\text{ and } \Delta\neq\emptyset
	\}.$ ($X\fsubeq Y$ indicates that $X$ is a finite subset of $Y$)
    \item $ \Gamma(X) $ indicates the set $ \Gamma\cap\lcalz(X) $. 
	\item $ \dtgamma:= $ the class of all 
	$\Gamma$-projective formulas in $\sft$. 
	We say that a formula $ A $ is \textit{$ \Gamma $-projective in $\sft$}, 
	if there is some substitution $ \theta $  and \seventaghir{$ B\in\Gamma(\parr) $} such that 
	$ \tvdash \theta(A)\lr B $ and $ A\vdasht x\lr \theta(x) $ for every $ x\in\varr $ (see \Cref{Gamma-proj-nonmodal}).  Remember that all substitutions are identity 	on $\parr$ and hence this definition implies 
	that $ A\vdasht a\lr \theta(a) $ for every $ a\in\atom $.
	Whenever $\sft=\IPC$, we may omit the superscript $ \sft $ and simply write $ \darrow \Gamma$. 
Also note that with this definition we have 
$\seventaghir{\Gamma(\parr)}\subseteq \dtgamma$, as witnessed by identity substitution.	
\end{itemize}
Also define 
\begin{itemize}
	\item $ \sfn:= $ as defined in \Cref{sec-NNIL-def}.
	\item ${\sf P}^X:={\sf Prime}^X:=$ the set of all 
	$X$-prime formulas, i.e.~the set of formulas $A$
	such that for every $B,C$ with $X\vdash A\to (B\vee C)$
	either we have $X\vdash A\to B$ or $X\vdash A\to C$.
Whenever $X=\emptyset$,  we may omit 
	the $X$ from notations.
\end{itemize}
And finally we assume that
$ (.)^\vee $ has the  lowest precedence after $ \darrow{(.)} $. This means that 

$$\darrow{XY}^\vee:=\left(\darrow{\left(XY\right)}\right)^\vee. $$  
\subsection{Admissibility and preservativity}\label{pres-admis}
Given a logic $\sft$, the binary relation $\adsm{\sft}{}$
is defined to hold for those pairs $A$ and $B$ such that the inference 
rule $A/B$ is admissible. More precisely $A\adsm{\sft}{}B$ iff 
for every substitution $\theta$, $\tvdash \theta(A)$ implies 
$\tvdash \theta(B)$.
The admissibility relationship is trivial when one considers classical
propositional logic, since every admissible $A/B$ is also derivable. 
However this relationship is highly nontrivial when one considers 
a modal logic or intuitionistic logic. Probably the first known 
non-derivable admissible rule is the following rule \citep{Harrop60}:
\begin{prooftree}
\Ax{$\neg A\to (B\vee C)$}
\UI{$(\neg A\to B)\vee(\neg A\to C)$}
\end{prooftree}
Harvey Friedman asked in 1975 for decidability of admissibility 
in the intuitionistic propositional logic.  Then 
\citep{Rybakov_1987,rybakov_1992,Rybakov_Book}  answers to this question 
positively. Although it was shown that no finite base exists for all 
admissible rules of intuitionistic logic $\IPC$ \citep{Rybakov87},
de Jongh and Visser introduced a recursive base and conjectured it to generate all admissible rules of $\IPC$. Then Iemhoff proved this 
conjecture \citep{Iemhoff-admissibility,IemhoffT}. 

Here in this paper, we consider a relativised version  of admissibility. 
Given a logic $\sft$ and a set $\Gamma$ of formulas define 
the $\Gamma$-admissibility relation in $\sft$ as follows
\begin{center}
	$A\argt B$ iff for every  substitution $\theta$
	and $C\in\seventaghir{\Gamma(\parr)}$: 
	$ \tvdash C\to\theta(A)$ 
	implies $ \tvdash C\to \theta(B)$.
\end{center}
Note that there is a hidden role for the language $\lcalz$
in the definition of $\argt$, when we consider substitution $\theta$.
However since almost everywhere in the paper we fix the language $\lcalz$,
by default we assume substitutions over this fixed language and we do not explicitly mention $\lcalz$. 

There is also another binary relation on formulas, called 
preservativity. 
The $\Gamma$-preservativity relation in $\sft$ is defined 
as follows:

\begin{equation*}
A\prtg B \quad \text{ iff } 
\quad \forall\,E\in\Gamma(\tvdash E\to A \Rightarrow \tvdash E\to B).
\end{equation*}
Preservativity could be considered as intuitionistic analogue of classical
interpretability or conservativity. 
This notion for the intuitionistic propositional logic,  well studied in
\citep{Visser02} and  \citep{Iemhoff.Preservativity} provided 
Kripke semantics for it. \citep{Zhou-PhD,Iemhoff2005} include some 
more elaboration on preservativity and provability, including fixed-point theorem and Beth property.

Following theorem says that $\argt$ and $\prtg$ are \sixtaghir{descending} 
on $\Gamma$. All over this paper we may 
use this fact without mentioning. 
\begin{theorem}\label{pres-admis-asce}
If $\Gamma\subseteq \Gamma'$  
then $ {\pres{\sft}{\Gamma'}}\subseteq {\prtg}$ and 
${\adsm{\sft}{\Gamma'}}\subseteq {\argt}$.
\end{theorem}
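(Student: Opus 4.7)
The plan is to prove both inclusions directly from the definitions in Section~\ref{pres-admis}, since each relation is defined by a universal quantification over the parameter class (the class $\Gamma$ of ``test formulas''), and enlarging the domain of a universal quantifier only strengthens the statement. So the inclusions go the way asserted: the relation indexed by the larger set is contained in the relation indexed by the smaller one.

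For the preservativity part, I would fix $A,B$ with $A \pres{\sft}{\Gamma'} B$ and unfold the definition: for every $E \in \Gamma'$, if $\sft \vdash E \to A$ then $\sft \vdash E \to B$. To conclude $A \prtg B$, I take an arbitrary $E \in \Gamma$; by the hypothesis $\Gamma \subseteq \Gamma'$ this $E$ also lies in $\Gamma'$, so the implication $\sft\vdash E\to A \Rightarrow \sft\vdash E\to B$ is already given. Hence $A \prtg B$.

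For the admissibility part, I would proceed symmetrically. Suppose $A \adsm{\sft}{\Gamma'} B$, i.e.\ for every substitution $\theta$ and every $C \in \Gamma'$ one has $\sft\vdash \theta(C\to A) \Rightarrow \sft\vdash \theta(C\to B)$. To derive $A \argt B$, pick an arbitrary substitution $\theta$ and an arbitrary $C \in \Gamma$; the assumption $\Gamma \subseteq \Gamma'$ places $C$ in $\Gamma'$, so the required implication is an instance of the hypothesis.

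There is no real obstacle here: the statement is pure bookkeeping about how the defining universal quantifier over $\Gamma$ interacts with set inclusion, and no properties of $\sft$ or of the substitutions beyond those already in the definitions are needed. The only thing worth flagging explicitly is that one should quantify over the same underlying language $\lcalz$ and the same class of substitutions in both sides of the admissibility inclusion, which the ambient convention already fixes.
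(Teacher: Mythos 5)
Your proof is correct and is exactly the routine unfolding of the definitions that the paper intends, since it leaves this verification to the reader. Nothing is missing: the inclusion reverses because enlarging the set of test formulas strengthens the universally quantified condition.
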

\begin{proof}
Left to the reader.
\end{proof}
\begin{theorem}\label{pres-admis-rel}
$A\argt B$ implies $A\preslow{\sft}{\dtgamma}B$.
\end{theorem}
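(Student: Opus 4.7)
The approach is a direct chase of definitions: we unfold $E\in\dtgamma$, use the witness substitution $\theta$ to convert the $\Gamma$-admissibility hypothesis into the required derivability, and transport back along the equivalence $E\vdasht x\lr\theta(x)$.

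Fix propositions $A,B$ with $A\argt B$, take any $E\in\dtgamma$ with $\tvdash E\to A$, and aim for $\tvdash E\to B$. By $\Gamma$-projectivity there exist a substitution $\theta$ and some $C\in\Gamma$ with $\tvdash \theta(E)\lr C$ and $E\vdasht x\lr\theta(x)$ for every $x\in\varr$. Because $\theta$ is the identity on parameters by the standing convention, a routine induction on formula structure lifts these atomic equivalences to $E\vdasht D\lr\theta(D)$ for every $D\in\lcalz$. Applying $\theta$ to the hypothesis $\tvdash E\to A$ gives $\tvdash \theta(E)\to\theta(A)$, and together with $\tvdash \theta(E)\lr C$ this becomes $\tvdash C\to\theta(A)$. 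Since $C\in\Gamma\subseteq\lcalz(\parr)$, we also have $\theta(C)=C$, so the last derivability is in fact $\tvdash \theta(C\to A)$. Invoking $A\argt B$ with the pair $(C,\theta)$ now yields $\tvdash \theta(C\to B)$, i.e.\ $\tvdash C\to\theta(B)$, and hence $\tvdash \theta(E)\to\theta(B)$.

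To conclude, chain the propagated equivalences: from $E\vdasht E\lr\theta(E)$ we get $\tvdash E\to\theta(E)$, which composes with $\tvdash \theta(E)\to\theta(B)$ to yield $\tvdash E\to\theta(B)$; then $E\vdasht B\lr\theta(B)$ delivers the desired $\tvdash E\to B$. There is no real technical obstacle: the only delicate point — and the only place where the parameter convention is genuinely used — is the identity $\theta(C)=C$, which hinges on $\Gamma$ being built from parameter-only propositions, the setting intended throughout the paper (as seen from the eventual instantiation $\Gamma=\NNILpar$).
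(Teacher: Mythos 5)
Your proof is correct and follows essentially the same route as the paper's: unfold the projectivity witness $(\theta,C)$ for $E$, push $\tvdash E\to A$ through $\theta$ to invoke $A\argt B$ at $C$, and pull the conclusion back along $E\vdasht D\lr\theta(D)$. You are in fact slightly more careful than the paper in flagging that $\theta(C)=C$ (hence that the hypothesis really has the form $\tvdash\theta(C\to A)$ required by the definition of $\argt$) rests on $\Gamma\subseteq\lcalzpar$, a point the paper's proof uses silently.
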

\begin{proof}
Let $A\argt B$ and $E\in\dtgamma$ such that $\tvdash E\to A$.
Since $E\in\dtgamma$ there is some $\theta$ and $E^\dagger\in
\seventaghir{\Gamma(\parr)}$ such that 
$E\vdasht \theta(a)\lr a$ for every $a\in\atom$ and 
$\tvdash \theta(E)\lr E^\dagger$.
\seventaghir{%
Hence by $\tvdash E\to A$ we have $\tvdash \theta(E)\to\theta(A)$ and thus
$\tvdash E^\dagger \to \theta(A)$. 
Since $E^\dagger\in\lcalzpar$ we have $\theta(E^\dagger)=E^\dagger$ and thus
we may conclude $\tvdash \theta(E^\dagger\to A)$. 
Then by $A\argt B$
we get $\tvdash  \theta(E^\dagger\to B)$ and thus 
$E\vdasht \theta(E^\dagger\to B)$. Since $E\vdasht \theta(a)\lr a$,
we may conclude $E\vdasht E^\dagger\to B$. Then since $\tvdash \theta(E)\lr E^\dagger$, we have $E\vdasht \theta(E)\to B$. Again from $E\vdasht \theta(a)\lr a$ we may conclude $E\vdasht E\to B$ and thus $\tvdash E\to B$.}
\end{proof}
\begin{question}
What can be said about the other direction of \Cref{pres-admis-rel}?
\end{question}

Later in this paper we axiomatize $\prtg$  and $\argt$
for several pairs $(\sft,\Gamma)$. Before we continue with this, let us see some basic axioms. 

Let $\sft$ be a  logic.
The logic $ \BART $
proves statements $ A\rhd B $ for $A,B\in\lcalz$
and has the following axioms and rules: 
\\[4mm]
\textbf{Axioms}
\begin{itemize}[leftmargin=1.5cm]
	\item[${\sf Ax}:$] \quad $A\rhd B$, 
	for every $  \tvdash A\to B$. 
\end{itemize}	
\textbf{Rules}
\begin{center}
	\bgroup
	\begin{tabular}{c c}
		\Ax{$A\rhd B$}
		\Ax{$A\rhd C$}
		\RLa{Conj}
		\BI{$A\rhd B\wedge C$}
		\DP \quad \quad \quad 
		&
		\Ax{$A\rhd B$}
		\Ax{$B\rhd C$}
		\RLa{Cut}
		\BI{$A\rhd C$}
		\DP \quad \quad \quad 
	\end{tabular}
	\egroup
\end{center}
\vspace{4mm}
The above mentioned axiom and rules are not interesting, because 
$\BART\vdash A\rhd B$ iff $\tvdash A\to B$. However we define several interesting additional rules:
\begin{center}
	\bgroup
	\def\arraystretch{1}
	\begin{tabular}{c c}
		\quad \quad \quad \Ax{$B\rhd A$}
		\Ax{$C\rhd A$}
		\RLa{Disj}
		\BI{$B\vee C\rhd A$}
		\DP
		&
		\quad \quad \quad 
		\Ax{$A\rhd B$}
		\Ax{($ C\in\Delta $)}
		\RLa{$\montd$}
		\LLa{}
		\BI{$ C\to A\rhd  C\to B$}
		\DP
	\end{tabular}
	\egroup
\end{center}
Remember (\Cref{notation-set}) that $A$ is called 
$X$-prime, if  
$X\vdash A\to (B\vee C)$
		implies either  $X\vdash A\to B$ or $X\vdash A\to C$ for every
		 $B,C$.  Moreover a set $\Gamma$ of formulas is called \textit{$X$-prime}, if every $A\in\Gamma$ is 
		 $X$-prime.
		Also we say that $\Gamma$ is 
		 \textit{closed under 
		$\Delta$-conjunctions}, if 
		$A\in\Gamma$ and $B\in\Delta$ implies $A\wedge B\in \Gamma$ 
		\uparan{up to $\sft$-provable equivalence}.  
\begin{theorem}[\textbf{Soundness}]\label{gen-pres-sound}
	$\BART$ is sound for  relative admissibility 
	interpretations,
	i.e.~$\BART\vdash A\rhd B$ implies $A\argt B$ 
	  for every set 
	$\Gamma$ of formulas and every  logic $\sft$. Moreover
	\begin{enumerate}
		\item \seventaghir{if $\Gamma$ only includes $\sft$-prime  formulas
		then Disj is also sound,}
		\item if $\Gamma$ is closed under $\Delta$-conjunctions, then $\montd$ 
		is sound.
	\end{enumerate}
\end{theorem}
\begin{proof}
	Easy induction on the complexity of proof $\BART\vdash A\rhd B$
	and left to the reader. 
\end{proof}
\begin{theorem}\label{vee-pres}
	${\prtg}={\prtgv\ }$ and  ${\artg}={\artgv}$ .
\end{theorem}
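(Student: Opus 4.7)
The plan is to show each containment separately. The inclusions ${\prtgv} \subseteq {\prtg}$ and ${\artgv} \subseteq {\artg}$ are immediate from \cref{pres-admis-asce}, since $\Gamma \subseteq \Gamma^\vee$ by definition (take $\Delta = \{C\}$ for $C \in \Gamma$, so $\bigvee \Delta = C \in \Gamma^\vee$). So the only content is the reverse inclusions.

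For ${\prtg} \subseteq {\prtgv}$, suppose $A \prtg B$ and let $E \in \Gamma^\vee$ with $\tvdash E \to A$. Write $E = C_1 \vee \ldots \vee C_n$ for $C_i \in \Gamma$. Since $\sft$ is closed under the basic intuitionistic rules, $\tvdash E \to A$ gives $\tvdash C_i \to A$ for each $i$; applying $A \prtg B$ to each $C_i$ yields $\tvdash C_i \to B$, and recombining by disjunction elimination yields $\tvdash E \to B$.

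For ${\artg} \subseteq {\artgv}$, suppose $A \argt B$, let $\theta$ be any substitution, and let $C = C_1 \vee \ldots \vee C_n \in \Gamma^\vee$ with $\tvdash \theta(C \to A)$. Because substitutions commute with all connectives, $\theta(C \to A) = \theta(C_1) \vee \ldots \vee \theta(C_n) \to \theta(A)$, and so $\tvdash \theta(C_i \to A)$ for each $i$. Applying $A \argt B$ with the same substitution $\theta$ and formula $C_i \in \Gamma$ gives $\tvdash \theta(C_i \to B)$ for each $i$, and reassembling yields $\tvdash \theta(C \to B)$.

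The argument is essentially one line, and I do not anticipate an obstacle: the only ingredient beyond the definitions is that disjunction distributes over implication in the premise position (a basic $\IPC$-theorem inherited by any logic $\sft$ closed under $\IPC$) and that substitutions commute with $\vee$ by definition. No projectivity or approximation machinery is needed here, in contrast to \cref{pres-admis-rel}.
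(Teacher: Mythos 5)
Your proof is correct and follows essentially the same route as the paper: the nontrivial inclusion is handled by splitting $E\in\Gamma^\vee$ into its disjuncts, applying the hypothesis to each, and recombining. The paper only writes out the preservativity half and leaves the admissibility half to the reader, whereas you spell out the latter (correctly noting that substitutions commute with $\vee$), but this is the same argument.
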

\begin{proof}
	We only  show $A\prtg B$ iff $A\prtgv B$ and leave the similar argument for $A\artg B$ iff $A\artgv B$ to the reader. 
	The right-to-left direction holds since 
	$\Gamma\subseteq \Gamma^\vee$. For the other direction assume that 
	$A\prtg B$ and let $E\in\Gamma^\vee$ such that 
	$\tvdash E\to A$. Then $E=\bigvee_i E_i$ with $E_i\in\Gamma$. 
	Hence for every $i$ we have $\tvdash E_i\to A$. 
	Then $A\prtg B$ implies $\tvdash E_i\to B$. Thus 
	$\tvdash E\to B$, as desired.
\end{proof}
\noindent\textbf{Notation.} Whenever $\sft=\IPC$ we may omit the $\sft$
form notations $\argt$ and $\prtg$ and simply write 
$\arg$ and $\prg$ for them. Also if $\Gamma:=\{\top,\bot\}$ we may omit 
$\Gamma$ from notations.
\subsection{Greatest lower bounds}\label{glb}
Given a set $\Gamma\cup\{A\}$ 
of formulas, and a  logic 
$\sft$,  we say that $B$ is a lower bound for
$A$ w.r.t.~$(\Gamma,\sft)$, if the following conditions met:
\begin{enumerate}
	\item $B\in\Gamma$,
	\item $\tvdash B\to A$.
\end{enumerate}
Moreover we say that $B$ is the greatest 
lower bound (glb)   for $A$ w.r.t.~$(\Gamma,\sft)$, 
if it is a lower bound for $A$ w.r.t.~$(\Gamma,\sft)$ and
for every lower bound $B'$ for $A$ w.r.t.~$(\Gamma,\sft)$
we have $\tvdash B'\to B$. Note that
\sixtaghir{if such glb exists}, 
up to $\sft$-provable equivalence, it is unique 
and  we annotate it as 
$\ap{\Gamma}{\sft}{A}$.
\\
We say that $ (\Gamma,\sft) $ is downward compact, if 
every $A\in\lcalz$ has glb w.r.t.~$ (\Gamma,\sft) $.
%

\begin{theorem}\label{Gamma-approx-preserv}
	$B$ is the glb for $ A $ w.r.t.~$(\Gamma,\sft)$, iff 
	\begin{itemize}
		\item $B\in\Gamma$,
		\item $\tvdash B\to A$,
		\item $A\prtg B$.
	\end{itemize}
	Hence we have
	$A\prtg \ap\Gamma\sft A$. 
\end{theorem}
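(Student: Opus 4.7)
The plan is to prove the biconditional by unpacking the three notions: lower bound, glb, and $\prtg$. Both directions are essentially immediate from the definitions, so I would not expect any real obstacle — the proof is a matter of rearranging what it means to be a greatest element among lower bounds versus the preservativity condition $A\prtg B$.

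For the forward direction, suppose $B$ is the glb of $A$ w.r.t.\ $(\Gamma,\sft)$. Then by definition $B$ is itself a lower bound, giving the first two bullets $B\in\Gamma$ and $\tvdash B\to A$. For the third bullet, take any $E\in\Gamma$ with $\tvdash E\to A$; then $E$ is a lower bound for $A$ w.r.t.\ $(\Gamma,\sft)$, so the glb property yields $\tvdash E\to B$. This is exactly the definition of $A\prtg B$ from \cref{pres-admis}.

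For the converse, assume the three bullets hold. The first two say that $B$ is a lower bound. To see it is the greatest, let $B'$ be any lower bound of $A$ w.r.t.\ $(\Gamma,\sft)$, i.e., $B'\in\Gamma$ and $\tvdash B'\to A$. Applying $A\prtg B$ to the witness $E := B'\in\Gamma$ gives $\tvdash B'\to B$, which is precisely the maximality condition for $B$.

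Finally, the last assertion $A\prtg \ap{\Gamma}{\sft}{A}$ is immediate: by definition of the notation $\ap{\Gamma}{\sft}{A}$, this proposition is the glb of $A$ w.r.t.\ $(\Gamma,\sft)$, so by the equivalence just proved it satisfies the third bullet, namely $A\prtg \ap{\Gamma}{\sft}{A}$. No induction, no auxiliary construction, and no use of the $\NNIL$-machinery is required — the statement is purely a definitional characterization, which is presumably why it is stated here as an easy consequence for later reference.
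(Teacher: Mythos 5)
Your proof is correct and is exactly the definitional unpacking the paper intends (its own proof is simply ``Left to the reader''): both directions follow by matching the quantification over lower bounds with the quantification over $E\in\Gamma$ in the definition of $\prtg$. The only implicit caveat, which you handle appropriately, is that the final assertion $A\prtg\ap{\Gamma}{\sft}{A}$ presupposes the glb exists, i.e.\ that $(\Gamma,\sft)$ is downward compact for $A$.
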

\begin{proof}
	Left to the reader.
\end{proof}
\begin{question}
	As we saw in \Cref{Gamma-approx-preserv}, the glb may be expressed via 
	preservativity relation $ \prtg $. One may think of $\Gamma$-conservativity relation:
	$$ A\prtgp B \quad \text{ iff }\quad \forall\,E\in\Gamma(\tvdash A\to E \Rightarrow \tvdash B\to E).$$
	\citep[Corollary 7.2]{Visser02} axiomatizes $\prtg$ for $\sft=\IPC$ and $\Gamma=\NNIL$. 	
	We ask for an axiomatization for $ \prtgp $ when we let $ \sft=\IPC $ and 
	$ \Gamma=\NNIL $. 
\end{question}
\begin{corollary}\label{Cor-Gamma-approx-preserv}
	If $\ap\Gamma\sft A$ exists, then for every $B\in\lcalz$ we have 
	
	$$\tvdash \ap\Gamma\sft A \to B\quad \text{ iff } 
	\quad 
	A\prtg B.
	$$
\end{corollary}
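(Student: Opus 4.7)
The plan is to derive both directions directly from \autoref{Gamma-approx-preserv}, which already packages the key facts: $\ap{\Gamma}{\sft}{A}\in\Gamma$, $\tvdash \ap{\Gamma}{\sft}{A}\to A$, and $A\prtg \ap{\Gamma}{\sft}{A}$.

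For the right-to-left direction, suppose $A\prtg B$. Since $\ap{\Gamma}{\sft}{A}$ is a lower bound for $A$ w.r.t.\ $(\Gamma,\sft)$, we have $\ap{\Gamma}{\sft}{A}\in\Gamma$ together with $\tvdash \ap{\Gamma}{\sft}{A}\to A$. Applying the definition of $\prtg$ with the witness $E:=\ap{\Gamma}{\sft}{A}$ immediately yields $\tvdash \ap{\Gamma}{\sft}{A}\to B$.

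For the left-to-right direction, suppose $\tvdash \ap{\Gamma}{\sft}{A}\to B$. To show $A\prtg B$, pick an arbitrary $E\in\Gamma$ with $\tvdash E\to A$. By \autoref{Gamma-approx-preserv} we have $A\prtg \ap{\Gamma}{\sft}{A}$, and since $E\in\Gamma$ witnesses that $A$ is implied by $E$, we obtain $\tvdash E\to \ap{\Gamma}{\sft}{A}$. Chaining with the assumption $\tvdash \ap{\Gamma}{\sft}{A}\to B$ (closure of $\sft$ under modus ponens/transitivity of implication) gives $\tvdash E\to B$, as required.

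There is essentially no obstacle: the corollary is a packaging result whose entire content is already contained in \autoref{Gamma-approx-preserv}. The only mild point of care is to invoke \autoref{Gamma-approx-preserv} in the correct direction in each case (using $\ap{\Gamma}{\sft}{A}\in\Gamma$ for one direction, and $A\prtg \ap{\Gamma}{\sft}{A}$ for the other). No properties of $\sft$ beyond being a logic closed under modus ponens are needed.
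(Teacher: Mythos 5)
Your proof is correct and follows essentially the same route as the paper's own argument: the right-to-left direction instantiates the definition of $\prtg$ at $E:=\ap{\Gamma}{\sft}{A}$ using the lower-bound properties, and the left-to-right direction uses $A\prtg\ap{\Gamma}{\sft}{A}$ from \cref{Gamma-approx-preserv} and chains implications. No gaps.
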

\begin{proof}
	First assume that $\tvdash \ap\Gamma\sft A \to B$. Also let $E\in\Gamma$
	such that $\tvdash E\to A$. \Cref{Gamma-approx-preserv} implies 
	$A\prtg\ap\Gamma\sft A$ and hence $\tvdash E\to \ap\Gamma\sft A$. 
	Then by $\tvdash \ap\Gamma\sft A\to B$ we get 
	$\tvdash E\to B$, as desired. \\
	For the other direction let 
	$A\prtg B$. By definition we have $\ap\Gamma\sft A\in\Gamma$
	and $\tvdash \ap\Gamma\sft A\to A$. Hence by $A\prtg B$
	we get $\tvdash \ap\Gamma\sft A\to B$, as desired. 
\end{proof}

\section{$\altNNILpar$-fication: unification to $\NNIL$}
\label{Sec-Ghil}
Silvio Ghilardi, in \citep{Ghil99} characterizes 
projective formulas in the language $\lcalz(\varr)$
with the aid of Kripke semantics. Then he uses this characterization to prove that the unification type of
$\IPC$ is finitary. Afterwards, Rosalie Iemhoff 
\citep{IemhoffT,Iemhoff-admissibility} uses this 
result together with a special sort of Kripke models, 
called AR-models, 
to characterize the admissible rules of $\IPC$. 
In this section we consider a  relativised version of those results.
The difference from the previous version is that 
we are not allowed substitute for parameters (a reserved set of atoms), and also instead of unification, we expect to 
simplify the formula to a 
$\altNNILpar$ formula, called $\altNNILpar$-fication. 
In fact, previous results will be special cases of ours when 
$\parr=\emptyset$ and hence $\NNILpar=\{\top,\bot\}$.
The methods of our proof follow  main roads taken in \citep{Ghil99,Iemhoff-admissibility}.
\\
We start with a relativised version of projective unification 
(\Cref{Gamma-proj-nonmodal}) and the 
extension property (\Cref{relative-ext-prop}). 
Then (\Cref{rel-ext-proj})
we prove a correspondence between relativised 
projectivity and extendibility.
Having such a Kripke semantical characterization 
in hand, 
then we prove that every formula has a finitary projective approximation (\Cref{proj-res}). 
Actually we prove something more: every formula 
has a finitary projective resolution (\Cref{def-proj-res}). 
Finally at the end of this section (\Cref{NNIL-resol}), we prove that 
in the specific  case, when $A\in\NNIL$, this finitary projective 
resolution takes an elegant form. 

\subsection{Relative projectivity}\label{Gamma-proj-nonmodal}

Given  $A\in\lcalz$, a substitution $\theta$ is 
called \textit{$A$-identity} (in $\IPC$)
if 

\begin{equation}
\text{For all atomic $a$ we have } \quad 
A\vdash  a\lr \theta(a).
\end{equation}
When one considers unification for propositional logics, 
projectivity is proved to be of great help \citep{Ghilardi97}.
As we will see, our study is not an exception.     
\\ 
If  $\Gamma\subseteq\seventaghir{\lcalz}$, a substitution $\theta$
is a \textit{$\Gamma$-fier}  (as a generalization for uni-fier) for 
$A$, if
$$
 \vdash \theta(A)\in\seventaghir{\Gamma(\parr)} 
\quad 
\text{i.e.~$\theta(A)$ is $\IPC$-equivalent to some 
$E\in\seventaghir{\Gamma(\parr)}$.}
$$
In this case we use the notation $ A\xtrath \Gamma $. 
$\theta$ is a unifier for $A$ if it is $\{\top\}$-fier for $A$. 
We say that a substitution  $ \theta $ projects $ A $ to $ \Gamma $
(notation: $  A\xratth \Gamma $)
if $ \theta  $ is $ A $-identity and  $ \Gamma $-fier. 
 We say that $ A $ is $ \Gamma $-projective (notation 
 $ A\xrat{}{} \Gamma $) if there is some $ \theta $ such that 
 $ A\xratth\Gamma $.
We say that 
$A$ is projective, if it is $\{\top \}$-projective.
Also $\dar{\,}{\Gamma}$
indicates the set of all formulas which are 
$\Gamma$-projective.
\sixtaghir{In all above notations, when the set $\Gamma$
is a singleton set $\{B\}$, we may skip braces
and simply write $B$ for $\Gamma$.}
\seventaghir{For example $A\xrat{\theta}{} B$ means that $\theta$ is $A$-identity and $B\in\lcalzpar$ and $\vdash\theta(A)\lr B$.}

\subsubsection*{Uniqueness of $\Gamma$-projections}
\seventaghir{Let $A\xrat{\theta}{} A'$ and  $A\xrat{\tau}{}A''$.}
Since   $\theta$ and $\tau$ are both $A$-identity,
for every atomic $a $ we have $A\vdash  \theta(a)
\lr\tau(a)$. Hence $A\vdash  \theta(A)\lr\tau(A)$
and then $A\vdash  A'\lr A''$. By applying 
$\theta$ to both sides of this derivation, 
we have $\theta(A)\vdash  \theta(A')\lr\theta(A'')$.
Since $\theta$ is  identity over parameters and $A',A''\in\lcalzpar$, 
we have $A'\vdash  A'\lr A''$. 
Hence $ \vdash A'\to A''$. Similarly we have 
$ \vdash A''\to A'$, and hence $ \vdash A'\lr A''$.
This argument shows that for every $\Gamma$-projective 
$A$, there is a unique (modulo $\IPC$-provable 
equivalence) $A^\dagger\in\Gamma$ such that 
for some $A$-identity $\theta$
we have $ \vdash \theta(A)\lr A^\dagger$. Such unique 
$A^\dagger$ is called the \textit{$\Gamma$-projection}
of $A$. 

Given a formula in $A$, the uniform post-interpolant of $A$ with respect to the set $\parr$
of atomic formulas, is a formula $B\in\lcalzpar$ with following properties:
\begin{itemize}
\item $\vdash A\to B$,
\item for every $C\in\lcalzpar$ such that $\vdash A\to C$ we have $\vdash B\to C$.
\end{itemize}
This means that $B$ is the least upper bound  for $A$ in $\lcalzpar$. 
The existance of such formula is non-trivial, 
however for the intuitionistic logic it always exists \citep{visser1996uniform}.
\begin{theorem}\label{Gamma-projectivity-pres}
\seventaghir{Let $A$ be $\Gamma$-projective. Then
$ A^\dagger$ is the uniform post-interpolant of $A$ with respect to the set $\parr$ of atomic formulas.}
\end{theorem}
\begin{proof}
Let $\theta$ be the $A$-identity $\Gamma$-fier for 
$A$, i.e.~$A\vdash B\lr \theta(B)$ for every $B$, 
and $  \vdash \theta(A)\lr A^\dagger$. 
Hence we have $A\vdash A\lr \theta(A)$ and thus $A\vdash A\lr A^\dagger$. 
This implies $\vdash A\to A^\dagger$. 
Next assume that $E\in\lcalzpar$ such that $\vdash A\to E$. Then $\vdash \theta(A\to E)$
and hence $\vdash A^\dagger\to \theta(E)$. Since \seventaghir{$E\in\lcalzpar$}
we get $\theta(E)=E$ and thus 
$\vdash A^\dagger\to E$. Hence we may conclude that $A^\dagger$ is the  uniform 
post interpolant of $A$ with respect to $\parr$.
\end{proof}

\begin{lemma}\label{proj-pres-admiss}
If  \seventaghir{$ A\xratth A^\dagger$}
 and $ B $ is an arbitrary formula,
then we have 
$$ \vdash A\to B \quad \text{ iff } \quad \vdash A^\dagger\to\theta( B).$$
\end{lemma}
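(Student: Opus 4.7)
The plan is a direct two-way implication that uses only the definitions of $A$-projectivity and of a $\Gamma$-fier, together with the standing convention that substitutions fix parameters. The first observation to record is an easy induction: from $A \vdash a \lr \theta(a)$ for every atomic $a$, one lifts this to $A \vdash C \lr \theta(C)$ for every proposition $C$, which will be used for $C = A$ and $C = B$. The second key observation is that since $A^\dagger \in \Gamma \subseteq \lcalzpar$ and $\theta$ is identity on $\parr$, we have $\theta(A^\dagger) = A^\dagger$, so $\theta(A^\dagger \to B) = A^\dagger \to \theta(B)$.

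For the left-to-right direction, I would assume $\vdash A \to B$, apply $\theta$ to both sides to obtain $\vdash \theta(A) \to \theta(B)$, and then rewrite $\theta(A)$ as $A^\dagger$ using $\vdash \theta(A) \lr A^\dagger$. This yields $\vdash A^\dagger \to \theta(B)$, which by the identification above is exactly $\vdash \theta(A^\dagger \to B)$.

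For the converse, I would assume $\vdash \theta(A^\dagger \to B)$, i.e.\ $\vdash A^\dagger \to \theta(B)$. Using the lifted projectivity $A \vdash A \lr \theta(A)$ together with $\vdash \theta(A) \lr A^\dagger$, one gets $A \vdash A^\dagger$, hence $A \vdash \theta(B)$. Then $A \vdash B \lr \theta(B)$ (again the lifted projectivity) gives $A \vdash B$, i.e.\ $\vdash A \to B$.

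There is no real obstacle; the whole argument is essentially bookkeeping around the fact that $\theta$ is idempotent on $A$ (in the sense that $A$ cannot distinguish $C$ from $\theta(C)$) and fixes the parameters appearing in $A^\dagger$. The only point to be careful about is the parameter-fixing convention, since without it the identification $\theta(A^\dagger) = A^\dagger$ fails and the lemma as stated would need $A^\dagger$ replaced by $\theta(A^\dagger)$ on the right-hand side.
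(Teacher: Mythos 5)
Your proof is correct and follows essentially the same route as the paper: the only cosmetic difference is that you re-derive $\vdash A\to A^\dagger$ inline from the lifted projectivity $A\vdash A\lr\theta(A)$ and $\vdash\theta(A)\lr A^\dagger$, whereas the paper cites this as \cref{Gamma-projectivity-pres}. Your explicit note that $\theta(A^\dagger)=A^\dagger$ because $A^\dagger\in\lcalzpar$ and substitutions fix parameters is exactly the bookkeeping the paper leaves implicit.
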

\begin{proof}
	The left-to-right direction is obvious. For   other direction, 
	let $\vdash A^\dagger\to\theta( B)$. Hence 
	$A\vdash \theta(A^\dagger\to B)$ and then $A\vdash A^\dagger\to B$.
	\Cref{Gamma-projectivity-pres} implies $\vdash A\to A^\dagger$
	and thus $\vdash A\to B$.
\end{proof}

\subsection{Relative extendibility}\label{relative-ext-prop}
Given a Kripke model $\kcal=(W,\pce,\V)$ and $w\in W$,
$\kcal_w$ indicates the restriction of $\kcal$ to 
the nodes $u\succcurlyeq w$. 
\sixtaghir{%
For a set $\abold\subseteq\atom$ and  
$\kcal'=(W',\pce',\V')$ and 
$\kcal=(W,\pce,\V)$, 
define $\kcal'\submodel\abold \kcal$,
}
if
 there  exists a relation $R\subseteq W'\times W$ such 
 that 
 \begin{itemize}
 \item $\kcal',w'\Vdash a$ iff $\kcal,w\Vdash a$, 
 for every  $a\in \abold$ and $(w',w)\in R$.
 \item $v'\sce'w'\mathrel{R} w$ implies   
 $\ex v\in W\ (v'\mathrel{R}v\sce w)$, for every $w\in W$
 and $w',v'\in W'$.
 \item $\fa w'\in W'\ex w\in W(w'\mathrel{R} w)$. 
 \end{itemize}
Also we say that $\kcal'\submodelp\abold\kcal$
if the above relation $R$, is function. In this case the 
second condition takes a more readable face:
\begin{itemize}
\item $w'\pce' v'$ implies $f(w')\pce f(v')$.
\end{itemize}
\sixtaghir{%
Moreover we say that 
$\kcal'$ is a $\abold$-submodel of $\kcal$,
denoted as $\kcal'\submodeli\abold \kcal$, if   
the above function $f$  is injective.
}
Also $\scrk\submodel\abold\kcal$   for 
a class $\scrk$ of Kripke models and a Kripke model
$\kcal$ indicates that for every $\kcal'\in\scrk$
we have  $\kcal'\submodel\abold\kcal$.   We have similar notations 
for $\scrk\submodeli\abold\kcal$ and  $\scrk\submodelp\abold\kcal$.
\sixtaghir{
Furthermore,  we define $\abold$-bisimilarity 
$\kcal'\sim^\abold\kcal$ as follows.
$\kcal'\sim^\abold\kcal$ if the following conditions hold: 
\begin{itemize}
\item For every $a\in\abold$, $\kcal\Vdash a$ iff $\kcal'\Vdash a$.
\item For every $w\in W$ other than the root,
there is some $w'\in W'$ such that $\kcal_w\sim^\abold \kcal'_{w'}$.
\item For every $w'\in W'$ other than the root,
there is some $w\in W$ such that $\kcal_w\sim^\abold \kcal'_{w'}$.
\end{itemize}
For $\kcal\sim^\abold\kcal'$,
it can be easily proved by induction on complexity of a formula $A\in\lcalza$ that 
$\kcal\Vdash A$ iff $\kcal'\Vdash A$.
}	

Since we only consider Kripke models  with finite rooted tree frames, we have the equivalency of $\submodelp\abold$  and 
$\submodel\abold$:
\begin{lemma}\label{Remark-embed-sub}
$\kcal'\submodel\abold \kcal$
is equivalent to $\kcal'\submodelp\abold\kcal$. 
\end{lemma}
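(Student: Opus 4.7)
The right-to-left direction is essentially trivial: any function $f\colon W'\to W$ is in particular a relation $R:=\{(w',f(w')):w'\in W'\}$, the totality condition is immediate, and the clause $w'\pce' v'\Rightarrow f(w')\pce f(v')$ is exactly the instance of the general forth-condition one obtains when $R$ is functional (taking $v\in W$ to be $f(v')$). So I would dispatch this direction in a single sentence.

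For the harder direction, assume $\kcal'\submodel\abold\kcal$ witnessed by a relation $R\subseteq W'\times W$. Because $\kcal'$ is a finite rooted tree, every $w'\in W'$ has a unique $\pce'$-path from the root $w'_0$, so the length of this path is a well-founded measure on $W'$. I would define a function $f\colon W'\to W$ by induction on this depth: for the root $w'_0$ use the totality condition (third clause of $\submodel\abold$) to pick any $w_0\in W$ with $w'_0\mathrel{R}w_0$ and set $f(w'_0):=w_0$; for a non-root $w'\in W'$ with immediate $\pce'$-predecessor $v'$, assume $f(v')$ is defined, observe that $v'\pce' w'$ i.e.\ $w'\sce' v'$, apply the forth-condition (second clause) to the pair $v'\mathrel{R}f(v')$ to obtain some $z\in W$ with $w'\mathrel{R}z$ and $z\sce f(v')$, and set $f(w'):=z$.

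It remains to check that this $f$ realises the functional submodel relation $\submodelp\abold$. The atomic clause is immediate from the relational atomic clause applied to the pair $(w',f(w'))\in R$ that the construction places into $R$ at every stage. For monotonicity, if $w'\pce' v'$ in $\kcal'$, then in a tree there is a unique $\pce'$-chain $w'=u'_0\pce'\cdots\pce' u'_k=v'$ through successive immediate successors, and the construction guarantees $f(u'_i)\pce f(u'_{i+1})$ at each link; transitivity of $\pce$ in $\kcal$ then gives $f(w')\pce f(v')$.

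The only delicate point is getting the direction of the forth-condition right — it is phrased on the superset $W$ via $v'\sce' w'\mathrel{R}w$, and one must feed the \emph{parent} $v'$ (with its already-defined image) in as the lower node $w$ here and the child as the upper node $v'$ of the statement, so as to produce an image for the child lying above $f(\text{parent})$. Once the indices are disentangled, the finite-tree hypothesis does all the work and no further technicalities arise.
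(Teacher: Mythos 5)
Your proof is correct and follows essentially the same route as the paper's: both directions are handled by noting that a monotone function is a special case of the relation $R$, and conversely by selecting a function $f\subseteq R$ recursively along the tree order, using totality at the root and the forth-condition at each immediate successor. The only cosmetic difference is that you carry out the depth induction and the index bookkeeping explicitly where the paper leaves the final monotonicity check to the reader.
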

\begin{proof}
We only reason for the left-to-right direction. 
Let $\kcal'\submodel\abold \kcal$ and $R$ is a relation with above mentioned properties.   
We define $f (w')$ by induction on the distance of 
a $w'\in W'$
from the minimal elements such that 
$w'\mathrel{R} f(w')$ and for every 
$v'\pce' w'$ we have  $f(v')\pce f(w')$.  

For every $\pce'$-minimal node $w'\in W'$, 
define $f(w')$ an arbitrary $w\in W$ 
with $w' \mathrel{R} w$. 
Note that such a $w$ always exists and satisfies all
required properties.   

Then we define $f(w')$ for $w'\in W'$ 
which is not minimal.  As induction hypothesis, 
we assume that for every 
$u'\prec' w'$ we already defined $f(u')\in W$ 
such that $u' \mathrel{R} f(u')$ and $v'\pce' u'$
implies $f(v')\pce f(u')$. 
Since $\kcal'$ is tree,  
there is a unique predecessor 
$u'\prec' w'$. 
Then by the induction hypothesis and properties of $R$, 
there is some 
$w\in W$ such that $f(u')\pce w $ and $w'\mathrel{R}w$. For some such $w$, define $f(w'):=w$. 
\end{proof}
Although $\submodeli\abold$ is not equivalent to 
$\submodelp\abold$, we have the following \sixtaghir{correspondence}.

\begin{lemma}\label{Remark-embed-sub2}
\sixtaghir{$\kcal_0\submodelp\abold\kcal_1$ iff 
$\exists\,\kcal_2\ (\kcal_0\submodeli\abold\kcal_2\ \&\ \kcal_2\sim^\abold\kcal_1 )$.}
\end{lemma}
\begin{proof}
%
\sixtaghir{We first prove the left-to-right direction.}
The proof is almost identical to the proof of theorem 6.9 in 
\citep{Visser-Benthem-NNIL} and we refer the 
reader to it for more details. 
Let $\kcal_i=(W_i,\pce_i,\V_i)$ for $i\in\{0,1\}$ 
and $f:W_0\to W_1$ be the embedding of $\kcal_0$ in $\kcal_1$. 
Moreover we may assume that $f$ is surjective, 
otherwise we add a copy of $\kcal_1$ to $\kcal_0$ 
with a fresh root in beneath of them and then extend 
the embedding to the new nodes. 

Define $\kcal_2:=(W_2,\pce_2,\V_2)$ as follows:
\begin{itemize}
\item $W_2:=\{ (w_0, w_1): w_0\in W_0 \text{ and } f(w_0)\pce_1 w_1\in W_1\}$.
\item $(w_0, w_1)\pce_2 (w'_0, w'_1)$ iff 
either of the following holds:
\begin{itemize}
\item $w_0\pce_0 w'_0$ and $w_1=f(w_0)$,
\item $w_0=w'_0$ and $w_1\pce_1 w'_1$.
\end{itemize}
\item $(w_0, w_1)\V_2 a$ iff $w_1\V_1 a$. 
\end{itemize}
It is straightforward to show that $\kcal_2$ is a finite rooted 
tree-frame Kripke model with root 
$(\rho, f(\rho))$ 
in which $\rho$ is the root of $\kcal_0$.
Also 
$\kcal_1$ and $\kcal_2$ are bisimilar. 
Moreover one may easily show that $g$ as defined in the following, 
is an injective embedding of $\kcal_0$ into $\kcal_2$:
$g(w_0):=(w_0, f(w_0))$.

\sixtaghir{
For the proof of other direction, assume that  $\kcal_0\submodelp \abold \kcal_2\sim^\abold \kcal_1$ and $\kcal_i=(W_i,\pce_i,\V_i)$ for $i\in\{0,1,2\}$. 
Let $f:W_0\longrightarrow W_2$ be the injective function witnessing $\kcal_0\submodelp\abold \kcal_2$. By induction on $w\in W_2$ ordered by $\prec_2$, 
we also define the function $g:W_2\longrightarrow W_1$ such that 
$(\kcal_2)_w\sim^\abold (\kcal_1)_{g(w)}$ for every $w\in W_2$.
\begin{itemize}
\item As the basic step of induction, for the root $w$ of $W_2$, define $g(w)$ as the root of $W_1$.
\item For any  $w\in W_2$ other than  the root, we do as follows.
Let $w$ be the immediate successor of $v\in W_2$. This means that $v\prec_2 w$ and 
there is no $v'$ such that $v\prec_2 v'\prec_2 w$. By induction hypothesis, $g(v)$
is already defined and $(\kcal_2)_v\sim^\abold(\kcal_1)_{g(v)}$.
Hence there is some $u\in W_1$ such that $(\kcal_2)_w\sim^\abold(\kcal_1)_{u}$. 
Define $g(w)$ as some such $u$.
\end{itemize}
Then it can easily observed that the composition function 
$g\circ f:W_0\longrightarrow W_1$ is a function witnessing 
$\kcal_0\submodelp\abold \kcal_1$.
}
\end{proof}
Remember that $\NNIL(\abold)$ indicates formulas in $\NNIL$ which are  boolean combinations of atomic formulas in $\abold$. 
The following theorem is a Kripke semantical 
characterization of $\NNIL$ formulas 
\citep{Visser-Benthem-NNIL}. 
\begin{theorem}\label{Theorem-NNIL-Submodel}
Given $\abold\subseteq\atom$, we have
$A\in\NNIL(\abold)$ iff the class of Kripke 
models of $A$ is closed under $\submodeli\abold$.
\end{theorem}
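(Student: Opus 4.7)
The plan is to prove the two directions separately; the forward direction is a straightforward induction, whereas the backward direction invokes the $\NNIL$-approximation machinery from \citep{Visser02}.

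For the forward direction $(\Rightarrow)$, I would proceed by induction on the complexity of $A\in\NNIL(\abold)$. The base case $A=a\in\abold$ is immediate, since by definition of $\submodeli\abold$ the valuation on atoms in $\abold$ is preserved. The cases $A=B\wedge C$ and $A=B\vee C$ follow directly from the induction hypothesis applied to the immediate subformulas, both of which lie in $\NNIL(\abold)$. The only substantive case is $A=B\to C$ with $B\in\NI(\abold)$ and $C\in\NNIL(\abold)$. Suppose $\kcal'\submodeli\abold\kcal$ and $\kcal\Vdash A$; fix $w\in W'$ with $\kcal',w\Vdash B$. Since $B$ is built from atoms in $\abold$ using only $\vee$ and $\wedge$, its forcing at any node is a boolean combination of atomic values at that very node, and these agree between $\kcal'$ and $\kcal$ at $w$ by $\submodeli\abold$. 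Hence $\kcal,w\Vdash B$, so $\kcal,w\Vdash C$. Observing that $\kcal'_w\submodeli\abold\kcal_w$ (the restriction of a submodel is a submodel), the induction hypothesis applied to $C\in\NNIL(\abold)$ gives $\kcal',w\Vdash C$. Since $w\in W'$ was arbitrary, $\kcal'\Vdash B\to C$.

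For the backward direction $(\Leftarrow)$, I would rely on the $\NNIL$-approximation from \citep{Visser02} mentioned in the introduction: there exists $A^\star\in\NNIL(\abold)$ with $\vdash A^\star\to A$ such that every $B\in\NNIL(\abold)$ with $\vdash B\to A$ also satisfies $\vdash B\to A^\star$. It suffices to prove $\vdash A\to A^\star$, for then $A$ is $\IPC$-equivalent to $A^\star\in\NNIL(\abold)$. Assume toward contradiction that some finite rooted tree Kripke model $(\kcal,\rho)$ validates $A$ at $\rho$ but not $A^\star$. Using the closure of $\Mod{A}$ under $\submodeli\abold$ together with the characteristic-formula machinery of \citep{Visser-Benthem-NNIL}, one produces an $\submodeli\abold$-submodel $\kcal^\dagger$ of $\kcal$ (which still models $A$ by hypothesis) whose characteristic $\NNIL(\abold)$-formula $\chi_{\kcal^\dagger}$ implies $A$ strictly more strongly than $A^\star$ does, contradicting the maximality of $A^\star$. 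Combining the two directions, and using \autoref{Remark-embed-sub2} to freely pass between $\submodelp\abold$ and $\submodeli\abold$ when dealing with Kripke countermodels, closes the argument.

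The forward direction is essentially routine once one exploits the structural asymmetry between $\NI$ and $\NNIL$, namely that implication-free antecedents evaluate locally. The main technical obstacle lies in the backward direction: building the refuting submodel and its associated characteristic $\NNIL(\abold)$-formula so as to undercut $A^\star$. This is the content of the theorem in \citep{Visser-Benthem-NNIL}, which I would cite for the detailed construction rather than reproduce here.
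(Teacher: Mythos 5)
The paper offers no proof of this statement at all --- its ``proof'' is the single line ``See \citep{Visser-Benthem-NNIL} or \citep{Visser02}'' --- so any genuine argument you supply is by construction a different route. Your forward direction is correct and complete: the only non-trivial case is $B\to C$ with $B\in\NI(\abold)$, and your observation that an implication-free formula over $\abold$ is evaluated pointwise (hence agrees on $\kcal'$ and $\kcal$ at each $w\in W'$), combined with persistence of $B\to C$ throughout $\kcal$ and the fact that $\kcal'_w\submodeli\abold\kcal_w$, does close the induction.

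The backward direction, however, is garbled at its pivotal step. There is no need to manufacture a proper submodel $\kcal^\dagger$, and ``implies $A$ strictly more strongly than $A^\star$ does'' is not an inequality that makes sense or that you need. The repair, using exactly your ingredients, is: suppose $\kcal\Vdash A$ but $\kcal\nVdash A^\star$, and set $\chi_\kcal:=\bigwedge\ttbrace{\kcal}_{\NNIL(\abold)}$, which is a single $\NNIL(\abold)$-formula by \cref{Remark-NNIL-finiteness}. By \cref{Theorem-NNIL-Submodel2} every model of $\chi_\kcal$ satisfies $\submodel\abold\kcal$, hence by \cref{Remark-embed-sub,Remark-embed-sub2} is an $\submodeli\abold$-submodel of some model of $A$, hence models $A$ by the closure hypothesis; so $\vdash\chi_\kcal\to A$. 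Since $A^\star$ is the greatest $\NNIL(\abold)$-lower bound of $A$, this forces $\vdash\chi_\kcal\to A^\star$; but $\kcal\Vdash\chi_\kcal$ while $\kcal\nVdash A^\star$, a contradiction. Two further small points: the existence of $A^\star$ for an arbitrary $\abold$ follows directly from \cref{Remark-NNIL-finiteness} by taking the disjunction of all $\NNIL(\abold)$-lower bounds of $A$ (the algorithm of \citep{Visser02} computes it only for $\abold=\atom$ and is not needed for mere existence); and what the backward direction actually yields is that $A$ is $\IPC$-\emph{equivalent} to a formula of $\NNIL(\abold)$, which is how the theorem's statement must be read, since closure of $\Mod{A}$ under $\submodeli\abold$ is invariant under provable equivalence while syntactic membership in $\NNIL(\abold)$ is not.
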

\begin{proof}
See \citep{Visser-Benthem-NNIL} or 
\citep{Visser02}.
\end{proof}

\begin{lemma}\label{Remark-NNIL-finiteness}
Modulo $\IPC$-provable equivalence, 
$\NNIL(\abold)$ is finite.
\end{lemma}
\begin{proof}
Observe that each formula can be written 
as $\bigvee\bigwedge C$ in which $C$ is an atomic or implication, 
which we call it a component. 
We assume that disjunction and conjunction over empty set of formulas are defined as $\bot$ and $\top$, 
respectively. 
Observe that the number of formulas in $n$ atoms,
$f(n)$, is less than or equal to $2^{2^{g(n)}}$, in which $g(n)$ is the 
number of components in $n$ atoms. Then observe that 
$g(n+1)\leq (n+1)f(n)+ n+1$, because one may assume that 
each component is either of the form  
$p\to A$ for some 
atomic $p$  and some $A$ in $n$ atoms,  
 or  it is  atomic. Hence the following recursive function is an upperbound for the 
number of all formulas in $n$ atoms:

\[f(0):=2\quad \quad , \quad 
\quad 
f(n+1):=2^{2^{(n+1)(f(n)+1)}}\qedhere
\] 
\end{proof}
Define $\ttbrace{\kcal}_{_\Gamma}:=\{A\in\Gamma: \kcal\Vdash A\}.$
\begin{theorem}\label{Theorem-NNIL-Submodel2}
Let $\kcal,\kcal'$ be two  Kripke models and $\abold\subseteq\atom$. 
Then $\kcal'\Vdash \ttbrace{\kcal}_{\nnil(\abold)}$ iff 
$\kcal'\submodel\abold \kcal$.
\end{theorem}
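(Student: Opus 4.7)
My plan is to handle the two directions separately. For the right-to-left direction I will chain the results already in the paper: given $A\in\nnil(\abold)$ with $\kcal\Vdash A$ and $\kcal'\submodel\abold\kcal$, \cref{Remark-embed-sub} lets me replace the hypothesis by $\kcal'\submodelp\abold\kcal$, then \cref{Remark-embed-sub2} produces $\kcal''$ with $\kcal'\submodeli\abold\kcal''\Vdash A$, and finally \cref{Theorem-NNIL-Submodel} (closure of $\nnil(\abold)$-models under $\submodeli\abold$) yields $\kcal'\Vdash A$.

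For the left-to-right direction I plan to build the witness relation explicitly as
\[
R:=\bigl\{(w',w)\in W'\times W \,:\, P_{w'}=P_w \text{ and } \kcal'_{w'}\Vdash\ttbrace{\kcal_w}_{\nnil(\abold)}\bigr\},
\]
where $P_u:=\{a\in\abold:u\Vdash a\}$ is the $\abold$-atomic type (computed in whichever model contains $u$). The first clause of $\submodel\abold$ is baked into $R$, so what remains is (i) exhibiting an $R$-partner in $W$ for the root $\rho'$ of $\kcal'$, and (ii) the extendibility clause: for $(w',w)\in R$ and $v'\sce' w'$, producing $v\in W$ with $w\pce v$ and $(v',v)\in R$.

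Both (i) and (ii) collapse to a single schematic claim which I expect to be the main obstacle, and which I will prove by contradiction. Assuming no suitable partner exists, I will partition the candidate nodes $v$ (those $\sce w$ in $\kcal$ for (ii), or all of $W$ for (i)) by comparing $P_v$ with $P_{v'}$: set $V_+:=\{v:P_v\supsetneq P_{v'}\}$, $V_0:=\{v:P_v=P_{v'}\}$, $V_-:=\{v:P_v\not\supseteq P_{v'}\}$. For each $v\in V_+$ I pick $a_v\in P_v\setminus P_{v'}$, and for each $v\in V_0$ pick $A_v\in\ttbrace{\kcal_v}_{\nnil(\abold)}\setminus\ttbrace{\kcal'_{v'}}_{\nnil(\abold)}$ (which exists by the failure hypothesis, since in $V_0$ the atomic types already match). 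I then assemble
\[
\phi:=\bigwedge_{a\in P_{v'}}a\;\to\;\Bigl(\bigvee_{v\in V_+}a_v\;\vee\;\bigvee_{v\in V_0}A_v\Bigr),
\]
which lies in $\nnil(\abold)$ by the $\NI\to\nnil$ formation rule (the antecedent is a conjunction of atomics, hence in $\NI$, and the succedent is a disjunction of atomics and NNIL formulas). A short verification should give $\kcal_w\Vdash\phi$, since any $u\sce w$ forcing the antecedent automatically lies in $V_+\cup V_0$ and therefore forces its designated disjunct; on the other hand $\kcal'_{v'}$ forces the antecedent while refuting every disjunct by the very choice of the $a_v$'s and $A_v$'s, so $\kcal'_{w'}\nVdash\phi$. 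This contradicts $\kcal'_{w'}\Vdash\ttbrace{\kcal_w}_{\nnil(\abold)}$ in case (ii) and the overall hypothesis $\kcal'\Vdash\ttbrace{\kcal}_{\nnil(\abold)}$ in case (i). Finiteness of $W$ keeps all the disjunctions finite, and \cref{Remark-NNIL-finiteness} is invoked only implicitly to guarantee that representatives $A_v$ are available.
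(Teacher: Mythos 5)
Your proof is correct, and it is worth noting that the paper itself contains no proof of \cref{Theorem-NNIL-Submodel2} at all --- it only cites Theorem~7.1.2 of the $\NNIL$ paper of Visser, van Benthem, de Jongh and de Lavalette --- so what you have written is a genuinely self-contained argument rather than a variant of the paper's. The right-to-left half chains \cref{Remark-embed-sub}, \cref{Remark-embed-sub2} and \cref{Theorem-NNIL-Submodel} exactly as those lemmas are set up to be used. The left-to-right half is the standard ``maximal candidate relation plus distinguishing formula'' argument: you take $R$ to be the largest relation that could possibly witness $\kcal'\submodel\abold\kcal$ and verify the zig condition by contradiction, assembling a single formula $\phi\in\nnil(\abold)$ forced by $\kcal_w$ and refuted at $v'$; this is in substance the argument of the cited source. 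The bookkeeping checks out: totality of $R$ on $W'$ follows from (i) and a single application of (ii) because every node of the rooted model $\kcal'$ lies above its root, membership of $\phi$ in $\nnil(\abold)$ follows from the $\NI\to\NNIL$ clause since the antecedent is a conjunction of atomics, and finiteness of $W$ keeps both disjunctions finite (you do not actually need \cref{Remark-NNIL-finiteness} even implicitly --- one witness $A_v$ per $v\in V_0$ suffices and $V_0$ is finite). The only blemishes are degenerate cases: if $P_{v'}=\emptyset$ or $V_+\cup V_0=\emptyset$ then $\phi$ becomes $\top\to(\cdots)$ or $(\cdots)\to\bot$, and the paper's inductive definition of $\NNIL$ and $\NI$ does not literally list $\top$ and $\bot$ as base cases; either add them (the paper tacitly does so elsewhere, e.g.\ in the proof of \cref{Remark-Finiteness-c(A)}) or replace $\phi$ in those cases by the bare consequent, respectively by $\neg\bigwedge P_{v'}$. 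Neither is a real gap.
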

\begin{proof}
See \citep[theorem~7.1.2]{Visser-Benthem-NNIL}.
\end{proof}
Given a substitution 
$\theta$ 
and a Kripke model $\kcal=(W,\pce,\V)$, we define 
$\theta(\kcal):=(W,\pce,\V')$  
as follows. 
For every atomic  $a$, 
define  $w\V' a$ iff $\kcal,w\Vdash \theta(a)$. 

\begin{lemma}\label{Lem-sub-swap}
Given a general substitution $\theta$,
Kripke model $\kcal$ and $A\in\lcalz$, we have 

$$\kcal,w\Vdash \theta(A) \quad \quad 
\text{iff} \quad \quad 
\theta(\kcal),w\Vdash A.
$$
\end{lemma}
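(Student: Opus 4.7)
The plan is to prove this by straightforward structural induction on the complexity of $A$. The key observation is that $\theta(\kcal)$ differs from $\kcal$ only in its valuation $\V'$, while the underlying frame $(W,\pce)$ is unchanged. Since the inductive definition of forcing for $\wedge,\vee,\to$ refers only to the frame structure and recursive forcing on subformulas (and $\theta$ commutes with all connectives by definition), the induction step will go through mechanically.

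For the base cases: when $A = a$ is atomic, both sides are equivalent by the very definition of $\V'$ in $\theta(\kcal)$, namely $w \V' a \iff \kcal,w \Vdash \theta(a)$. The case $A = \bot$ is trivial since $\theta(\bot) = \bot$ and both sides are false at every node. For the inductive steps with $\wedge$ and $\vee$, I would simply unfold the semantic clause, apply the fact that $\theta(B \circ C) = \theta(B) \circ \theta(C)$, and invoke the induction hypothesis on $B$ and $C$.

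The only case warranting a moment's care is implication. Here I would compute:
\begin{align*}
\kcal, w \Vdash \theta(B \to C) &\iff \kcal, w \Vdash \theta(B) \to \theta(C) \\
&\iff \forall u \sce w\ (\kcal, u \Vdash \theta(B) \Rightarrow \kcal, u \Vdash \theta(C)) \\
&\iff \forall u \sce w\ (\theta(\kcal), u \Vdash B \Rightarrow \theta(\kcal), u \Vdash C) \\
&\iff \theta(\kcal), w \Vdash B \to C,
\end{align*}
where the third equivalence uses the induction hypothesis at each $u \sce w$, which is legitimate because $\theta(\kcal)$ shares the same accessibility relation $\pce$ as $\kcal$.

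There is no real obstacle here; the lemma is a routine translation lemma, and the argument is essentially bookkeeping. The only subtlety worth flagging is that we must ensure $\theta(\kcal)$ is indeed a well-defined Kripke model, i.e.~that the persistence condition on $\V'$ holds: if $w \pce u$ then $\kcal, w \Vdash \theta(a) \Rightarrow \kcal, u \Vdash \theta(a)$, which follows immediately from persistence of forcing in $\kcal$. With this in place, the induction runs without friction.
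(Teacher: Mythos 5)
Your proof is correct and follows exactly the route the paper intends: the paper's proof is just the one-line instruction ``use induction on the complexity of $A$,'' and your write-up supplies that induction in full, including the worthwhile check that $\theta(\kcal)$ satisfies persistence and is hence a well-defined Kripke model.
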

\begin{proof}
Use induction on the complexity of $A$.
\end{proof}
\begin{remark}
Let $\theta$ and $\tau$ be two general substitutions
and $(\theta\tau)$ is their composition. 
Above lemma, implies that 
$(\theta\tau)(\kcal)=\tau(\theta(\kcal))$. This conflicts 
with our standard notation for the composition of 
functions. This confliction could be resolved
by choosing another name, e.g.~$\theta^*$ for the operation 
on Kripke models corresponding to $\theta$. 
However, for the simplicity of notations, we prefer not to do so.
\end{remark}

In the rest of this section
(\Cref{relative-ext-prop}), we define some notions for
the Kripke semantics. The rationale behind them 
becomes
clearer when one looks at the proof of 
\Cref{Theorem-Ghil-Ext}.

Given   Kripke models $\kcal$ and $\kcal'$, 
we say that $\kcal'$ is an
$\abold$-\textit{variant} of $\kcal$, if 
$\kcal'$ and $\kcal$ share the same frame and the same 
atomic valuations, except possibly at the root and only 
for atoms not in $\abold$, for which we may 
have different  valuations. 
\sixtaghir{In other words, $\abold$-\textit{variant} Kripke models should have the same valuation for atomics in  $\abold$.} 
We also say that $\kcal'$ is
a variant of $\kcal$, 
if it is $\emptyset$-variant of $\kcal$.
Moreover for a Kripke model $\kcal$ with the root $w_0$ we define 
$\kcal\vdashm  A$ iff $\kcal,w\Vdash A$ for 
every $w\neq w_0$.
\\
We say that  a formula $A$ is  \textit{$\abold$-subextendible}
if for every Kripke model $\kcal\Vdash A$ and every 
$\kcal'\submodeli\abold \kcal$ with $\kcal'\vdashm A$
there is an  $\abold$-variant $\kcal''$ of $\kcal'$
such that $\kcal''\Vdash A$. 
  
\sixtaghir{
\begin{example}
Every $A\in\NNIL(\abold)$ is $\abold$-subextendible. Furthermore, for $p\in\parr$
and $x\nin\parr$, the formula 
$B:=p\wedge x$ is $\parr$-subextendible while $C:=\neg\neg p \wedge x$ is not.
\end{example}
\begin{proof}
First statement is due to this fact that 
validity of $\NNIL(\abold)$-formulas are preserved under  $\abold$-submodels (see \Cref{Theorem-NNIL-Submodel}).

To see why $B$ is $\parr$-subextendible, 
let $\kcal\Vdash A$ and $\kcal'\submodeli \parr \kcal$ and $\kcal'\vdashm B$
seeking to find some $\kcal''\Vdash B$ which is $\parr$-variant of $\kcal'$.
Since $\kcal'\submodeli\parr \kcal\Vdash p$, we get $\kcal'\Vdash p$. 
However, it might not be the case that $\kcal'\Vdash x$, in which case we 
define $\kcal''$ by changing the valuation of $\kcal'$ on $x$ at the root. 
Since $\kcal'\vdashm B$, the resulted  model $\kcal''$ is a Kripke model 
indeed (in the sense of having monotonicity of truth on atomics).  
Moreover obviously $\kcal''$ is a $\parr$-variant of $\kcal'$ 
and $\kcal''\Vdash B$. This finishes showing $\parr$-subextendibility of $B$.

Finally to see why $C$ is not $\parr$-subextendible, consider 
$\kcal:=(W,\prec,\V)$ as follows. Define $W:=\{0,1\}$ with $0\prec 1$
 as the only  accessibility relation. Also 
$1\V p$ and $0,1\V x$ are only valuations for atomics.
Then we have $\kcal\Vdash C$. Moreover the model $\kcal':=(W',\prec',\V')$
defined as $W':=\{0\}$ and $\prec':=\emptyset$ and 
$0\V x$ as only atomic valuation. Then $\kcal'\vdashm C$ (actually $\kcal'\vdashm D$ for every $D$) while there is no $\parr$-variant $\kcal''$ of $\kcal'$ such that 
$\kcal''\Vdash C$ (note that any $\parr$-variant of $\kcal'$ should validate 
$\neg p$ and hence not validating $\neg\neg p$). 
\end{proof}
}  
  
For later applications in this paper it is helpful to define 
$\abold$-subextendibility also for a class of Kripke models. 
Let $\scrk$ is a \sixtaghir{set} of  Kripke models. 
Define the Kripke model $\sum(\scrk)$ as the disjoint union of all Kripke models in $\scrk$ with a fresh root 
$w_0$ such that for every atomic $a$ we have 
$\sum(\scrk),w_0\Vdash a$ iff $\scrk\Vdash a$.
Also for a
Kripke model $\kcal$ with
$\scrk\submodeli{\abold}\kcal$  (see \Cref{relative-ext-prop})
define $\sum(\scrk,\kcal)$ 
as   disjoint union 
of the Kripke models in $\scrk$ 
with a fresh root $w_0$  and following valuation for atoms
$a\in\abold$:

$$\sum(\scrk,\kcal),w_0\Vdash a\quad \text{iff}\quad 
\kcal\Vdash a. $$
We say that $\scrk$ is  
\textit{$\abold$-subextendible}, if
for every finite $\scrk'\subseteq \scrk$ 
with $\scrk'\submodeli\abold \kcal\in\scrk$, 
there is an $\abold$-variant of $\sum(\scrk',\kcal)$ which
belongs to $\scrk$. We say that $\scrk$ is \textit{extendible}
if  it is nonempty and $\emptyset$-subextendible.
One may easily observe that $A$ is $\parr$-subextendible iff $\Mod A$ is so.

\sixtaghir{
We have the following correspondence between two notions of $\abold$-subsextendibility.
\begin{lemma}\label{remark-parr-subext-semant}
$A\in\lcalz$ is $\abold$-subextendible iff $\Mod A$ 
is so. 
\end{lemma}
\begin{proof}
For the left-to-right: 
let   $\scrk\submodeli\abold\kcal\Vdash A$ and $\scrk$ is a finite set 
of models of $A$. Then obviously  $\sum(\scrk,\kcal)\vdashm A$ and hence by 
$\abold$-subextendibility of $A$, there is a $\abold$-variant 
of $\sum(\scrk,\kcal)\vdashm A$ which validates $A$.

For the right-to-left direction: 
let $\kcal\Vdash A$ and $\kcal'\submodeli\abold \kcal$ and $\kcal'\vdashm A$. 
Without loss of generality, we also may assume that $\kcal$ and $\kcal'$ has the same valuations for atomics in $\abold$, otherwise we may replace $\kcal_{f(w')}$
for $\kcal$ with $f$ as the function witnessing $\kcal'\submodeli\abold \kcal$
and $w'$ as the root of $\kcal'$. Obviously, we also have 
$\kcal_{f(w')}\Vdash A$ and $\kcal'\submodeli\abold \kcal_{f(w')}$. 
Let $w_i$ for $0\leq i\leq n$ be all the immediate successors\footnote{an immediate successor of a node is some accessible node such that there is no other nodes in between.}
of the root of $\kcal'$.  Define $\scrk:=\{\kcal'_{w_i}: 0\leq i\leq n\}$. Then 
$\scrk\submodeli\abold \kcal$ and $\scrk\Vdash A$.
Hence by $\abold$-subextendibility of $\Mod A$, there is an $\abold$-variant 
$\kcal''$ of $\sum(\scrk,\kcal)$ such that $\kcal''\Vdash A$. 
Since $\kcal'$ and $\kcal$ validate the same atomics in $\abold$, 
this implies that $\kcal''$ is also an $\abold$-variant of $\kcal'$.
\end{proof}
}

There is a well-known another notion in the literature
\citep{Ghil99}
for extendibility.  It says that $A$ is \textit{extendible} 
if 
 ``for every Kripke model $\kcal$ such that $\kcal\vdashm  A$ there is 
a variant $\kcal'$ of $\kcal$ such that $\kcal'\Vdash A$".  The following remark shows the relationship between our notion of $\abold$-subextendibility and extendibility.
\begin{remark}\label{remark-extendibility-emptyset}
A formula $A$ is extendible
 iff  it is satisfiable and 
$\emptyset$-subextendible.
\end{remark}
\begin{proof}
First assume that $A$ is extendible. It is obvious that 
then $A$ is $\emptyset$-subextendible. It remains only to show that $A$ is satisfiable.
Let $\kcal$
be an arbitrary single-node Kripke model. 
Then obviously $\kcal\vdashm  A$ and hence 
by extendibility,  there is a variant $\kcal'$ of 
$\kcal$ such that 
$\kcal'\Vdash A$. Thus $A$ is satisfiable. 
\\
For the other direction, assume that $A$
is satisfiable and $\emptyset$-subextendible. Also assume 
that $\kcal'$ is a Kripke model such that 
$\kcal'\vdashm  A$. Define the Kripke model 
$\kcal$ as follows. $\kcal$ and $\kcal'$ share the same frame and the atomic valuation of $\kcal$ at all nodes are the same and equal to the valuation which classically validates $A$ (which is guaranteed by satisfiability condition on $A$). Then we have 
$\kcal'\submodeli\emptyset \kcal$ and hence by 
$\emptyset$-subextendibility of $A$ we get some 
variant $\kcal''$ of $\kcal'$ such that $\kcal''\Vdash A$.
\end{proof}

%

\subsection{$\altNNILpar$-projectivity and $\parr$-extendibility}
\label{rel-ext-proj}
In this section we will prove \Cref{Theorem-Ghil-Ext},
an extension of  Ghilardi's 
characterization of projective formulas
via the notion of extendibility 
(see \Cref{Theorem-Ghil}). 
\\
For a formula $A$ and 
a set $\xbold\subseteq \varr$,
define the substitution 
$\txa:\lcalz \longrightarrow\lcalz$ as follows:

\begin{equation*}
\txa(x):=\begin{cases}
A\to x \quad  &: x\in\varr\cap\xbold\\
A\wedge x &: x\in\varr\setminus\xbold
\end{cases}
\end{equation*}

Let $\xbold_{\!1},\ldots,\xbold_{\!s}$ be  a list of all subsets 
of $\varr$ such that
$\xbold_{\!i}\subseteq\xbold_{\!j}$ implies
$i\leq j$.  Finally define

\begin{equation*}
\ta{} :=\ta{\xbold_{\!s}}\ta{\xbold_{\!s-1}}\ldots
\ta{\xbold_{\!1}} 
\end{equation*}
The following theorem, is the main preliminary tool 
provided in \citep{Ghil99} to characterize the  unification type of $\IPC$. We refer the reader to 
\citep[theorem 5]{Ghil99} for  its proof. 
We will  prove a generalization of this in \Cref{Theorem-Ghil-Ext}. 

\begin{theorem}\label{Theorem-Ghil}
For $A\in\lcalz$,
the following conditions are equivalent:
\begin{enumerate}
\item $\ta{}$ is a unifier for $A$, 
i.e.~$ \vdash\ta{}(A)$, 
\item $A$ is projective, 
\item $A$ is extendible.
\end{enumerate}
\end{theorem}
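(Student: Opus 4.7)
\textbf{The plan} is to establish the cycle $(1) \Rightarrow (2) \Rightarrow (3) \Rightarrow (1)$.

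For $(1) \Rightarrow (2)$, I would verify that the substitution $\ta{}$ defined in the statement is itself $A$-projective; combined with the hypothesis $\vdash \ta{}(A)$, this exhibits $\ta{}$ as the desired $A$-projective unifier. To check $A \vdash a \lr \ta{}(a)$ for every $a \in \atom$, note that on parameters $\ta{}$ is the identity, while for a variable $x$ each basic factor $\ta{\xbold}(x)$ is either $A \to x$ or $A \wedge x$, both $A$-provably equivalent to $x$. A short induction on the number of factors in the composition $\ta{\xbold_{\!s}} \cdots \ta{\xbold_{\!1}}$, using that $A$-provable equivalences are preserved by $A$-projective substitution, then lifts this to $A \vdash x \lr \ta{}(x)$.

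For $(2) \Rightarrow (3)$, let $\theta$ be an $A$-projective unifier. Given an extendibility configuration $\kcal' \submodeli{\emptyset} \kcal \Vdash A$ with $\kcal' \Vdash^- A$, I would define a variant $\kcal''$ of $\kcal'$ that differs from $\kcal'$ only at the root $w_0$ by setting $\kcal'', w_0 \Vdash a$ iff $\kcal', w_0 \Vdash \theta(a)$ for every atomic $a$. At a proper successor $u$ of $w_0$ in $\kcal'$ we have $\kcal', u \Vdash A$, so the $A$-projectivity of $\theta$ yields $\kcal', u \Vdash a \lr \theta(a)$; hence $\kcal''$ has the same atomic valuations as the model $\theta(\kcal')$ of \cref{Lem-sub-swap} at every node. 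Consequently $\kcal'' \Vdash A$ iff $\theta(\kcal') \Vdash A$ iff $\kcal' \Vdash \theta(A)$, and the last holds because $\theta$ unifies $A$.

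For $(3) \Rightarrow (1)$, which I expect to be the main obstacle, I would show by induction on the depth of a finite rooted tree Kripke model $\kcal$ that $\ta{}(\kcal) \Vdash A$; by \cref{Lem-sub-swap} and completeness this delivers $\vdash \ta{}(A)$. The enumeration condition $\xbold_{\!i} \subseteq \xbold_{\!j} \Rightarrow i \leq j$ is the engine of the induction: at the root $w_0$ one identifies the set $\xbold$ of variables currently forced and argues that once the prefix of factors indexed by subsets strictly smaller than $\xbold$ has been applied, every proper successor of $w_0$ already forces $A$ by the inductive hypothesis; the designated factor $\ta{\xbold}$ then produces a root-variant of this configuration that satisfies $A$ via one application of the extendibility hypothesis; and any later factor $\ta{\xbold_{\!i}}$ with $\xbold \subseteq \xbold_{\!i}$ leaves the valuation at this root-node untouched, because at a node forcing $A$ both $A \to x$ and $A \wedge x$ collapse to $x$.

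The delicate point in $(3) \Rightarrow (1)$ is the bookkeeping: at every stage of the induction one must certify that the model at hand is legitimately a subframe of some model of $A$ whose proper successors satisfy $A$, so that extendibility is applicable, and one must confirm that the compositional action of $\ta{}$ genuinely encodes these successive root extensions without later factors undoing the earlier ones. The precise ordering of the subsets of $\varr$ is exactly what makes this bookkeeping consistent, and I expect this to be the technically hardest part of the proof.
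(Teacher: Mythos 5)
Your plan is correct and follows essentially the same route as the paper, which for this theorem simply cites Ghilardi's Theorem~5 and later reproves the statement in generalized, parametric form as \cref{Theorem-Ghil-Ext} via the same cycle: $(1)\Rightarrow(2)$ by checking that each factor $\ta{\xbold}$ is $A$-projective and that $A$-projective substitutions compose, $(2)\Rightarrow(3)$ by taking $\theta(\kcal')$ as the required root-variant, and $(3)\Rightarrow(1)$ by induction on the height of the model. The only caveat is that the combinatorial core of $(3)\Rightarrow(1)$ --- that the ordered composition $\ta{}$ genuinely realizes at the root the variant supplied by extendibility --- is precisely what the paper isolates as \cref{Lem-Ghil-1} and again delegates to Ghilardi without proof, so the ``bookkeeping'' you flag as delicate is exactly the content left unverified on both sides.
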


\begin{lemma}\label{Lem-Ghil-1}
If   $\ta{}(\kcal)\vdashm  A$ and $A$ is valid in  a 
$\parr$-variant of $\ta{}(\kcal)$ then  $\ta{}(\kcal)\Vdash A$. 
\end{lemma}
\begin{proof}
See proof of the theorem 5 in \citep{Ghil99}.
\end{proof}

Before we continue with a generalization 
of above theorem, let us give another definition, \sixtaghir{which is essentially a calculation of $\altNNILpar$-projection of $A$ (see \cref{Gamma-proj-nonmodal})}. 
Let $\kcal$ be  a Kripke model and 
$\pbold\subseteq\parr$.    
Then define $A^\ddagger$ as follows: 

\begin{equation*}
A^\ddagger:=
\bigwedge_{\pbold\subseteq\parr}\left(
\bigwedge\pbold\to \bigvee_{\kcal\Vdash \pbold,A} 
\bigwedge\tbnpr{\kcal}
\right)
\end{equation*}
Note that since by \Cref{Remark-NNIL-finiteness} the set  
$\NNILpar$ is finite and $\tbnpr{\kcal}\subseteq\NNILpar$, 
the conjunction $\bigwedge \tbnpr{\kcal}$ is 
a formula and also the disjunction may considered as a finite disjunction.

\sixtaghir{Remember that previously we defined 
$ A^\dagger $ for $ \altNNILpar $-projective $ A $ as the unique 
$ A'\in\altNNILpar $ such that $ A\xrat{}{} A' $. 
We will see in \Cref{Theorem-Ghil-Ext} 
that these two definitions are the same up to 
$ \IPC $-provable equivalence. Hence by \Cref{Gamma-projectivity-pres}
$A^\ddagger$ is the best $\NNILpar$-approximation of $A$ from above.}

\begin{remark}\label{dagger-dist-arrow}
	Note that by above definition, 
	if $  \vdash A\to B $ then $  \vdash A^\ddagger\to B^\ddagger $.
\end{remark}

\begin{theorem}\label{Theorem-Ghil-Ext}
	For $A\in\lcalz $, the following conditions are equivalent:
	\begin{enumerate}
		\item $A\xtra{\ta{}}{} A^\ddagger$,
		\item $A\xrat{}{} \altNNILpar$,
		\item $A$ is $\parr$-subextendible. 
	\end{enumerate}
\end{theorem}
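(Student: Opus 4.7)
The plan is to prove the cycle $(1) \Rightarrow (2) \Rightarrow (3) \Rightarrow (1)$, adapting Ghilardi's strategy from \citep{Ghil99} to the relative setting where parameters must remain fixed. Two preliminary observations will be invoked throughout. First, $A^\dagger \in \NNILpar$ by inspection of its shape: it is a finite conjunction of formulas $\bigwedge\pbold \to \bigvee_\kcal \bigwedge\tbnpr{\kcal}$, where $\bigwedge\pbold \in \NI$ and the right-hand disjuncts lie in $\NNILpar$. Second, $\vdash A \to A^\dagger$ holds unconditionally: for any $\kcal \Vdash A$ and any $w$ forcing $\bigwedge\pbold$, the restriction $\kcal_w$ itself satisfies $A$ and $\pbold$ and witnesses the required disjunct via \cref{Theorem-NNIL-Submodel2}.

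For $(1) \Rightarrow (2)$, I first verify that $\ta{}$ is $A$-projective independently of the hypothesis: each factor $\ta{\xbold}$ satisfies $A \vdash x \lr \ta{\xbold}(x)$ for every $x \in \varr$ (using $A \vdash (A \to x) \lr x$ and $A \vdash (A \wedge x) \lr x$), and $A$-projectivity is preserved under composition. Combined with the $\NNILpar$-fier content of $(1)$, this exhibits $\ta{}$ as a witness for $A \xrat{}{} \NNILpar$.

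For $(2) \Rightarrow (3)$, take $\theta$ witnessing $A \xrat{\theta}{} A^\ddagger \in \NNILpar$, and let $\kcal \Vdash A$, $\kcal' \submodeli\parr \kcal$, $\kcal' \Vdash^- A$. Define $\kcal''$ to be the $\parr$-variant of $\kcal'$ whose root variable-valuation is set so that $\kcal'', w_0 \Vdash x$ iff $\kcal', w_0 \Vdash \theta(x)$ (legal because $\theta$ fixes parameters). At any non-root $w$, $\kcal', w \Vdash A$ gives $\kcal', w \Vdash a \lr \theta(a)$ for every atomic $a$, so $\theta(\kcal')$ and $\kcal'$ coincide at $w$; hence $\kcal'' = \theta(\kcal')$ globally, and \cref{Lem-sub-swap} turns $\kcal'' \Vdash A$ into $\kcal' \Vdash A^\ddagger$. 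But $\kcal \Vdash A$ implies $\kcal \Vdash A^\ddagger$, and since $A^\ddagger \in \NNILpar$ is preserved under $\submodeli\parr$ by \cref{Theorem-NNIL-Submodel}, we obtain $\kcal' \Vdash A^\ddagger$ and thus $\kcal'' \Vdash A$.

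For $(3) \Rightarrow (1)$, the easy half $\vdash \ta{}(A) \to A^\dagger$ needs no extendibility: if $\ta{}(\kcal) \Vdash A$, then for every $w$ and every $\pbold$ with $\kcal, w \Vdash \bigwedge\pbold$ (equivalently $\ta{}(\kcal), w \Vdash \bigwedge\pbold$, since $\ta{}$ fixes $\parr$), the restriction $\ta{}(\kcal)_w$ witnesses the required disjunct through $\kcal_w \submodel\parr \ta{}(\kcal)_w$ (identity on the frame, identical parameter valuations) together with \cref{Theorem-NNIL-Submodel2}. The converse $\vdash A^\dagger \to \ta{}(A)$ is the crux and where $\parr$-extendibility becomes indispensable. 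Starting from $\kcal \Vdash A^\dagger$ I intend to construct $\ta{}(\kcal) \Vdash A$ by a joint induction on the depth of nodes in $\kcal$ and the layers of the iterated substitution $\ta{\xbold_s} \circ \cdots \circ \ta{\xbold_1}$: at each node $w$, $A^\dagger$ supplies (via \cref{Theorem-NNIL-Submodel2}) a model $\kcal^\star \Vdash A$ with $\kcal_w \submodel\parr \kcal^\star$, and $\parr$-extendibility then lets me paste $\kcal^\star$ on top of the $A$-models already built strictly above $w$, yielding a $\parr$-variant that forces $A$ at $w$. The ordering of the $\xbold_i$ controls exactly which variables are reset at each layer, keeping the pasting consistent. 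The main obstacle is precisely this last induction: the relative version requires replacing ordinary variants and extendibility by $\parr$-variants and $\parr$-extendibility while preserving the delicate combinatorics of Ghilardi's layer-and-depth argument, with \cref{Theorem-NNIL-Submodel,Theorem-NNIL-Submodel2} serving as the bridge between $\NNILpar$-content and $\submodel\parr$-structure.
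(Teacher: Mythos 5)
Your implications $(1)\Rightarrow(2)$ and $(2)\Rightarrow(3)$ are correct and essentially the paper's own: the same observation that $\ta{}$ is $A$-projective because each factor $\ta{\xbold_{\!i}}$ is and projectivity is closed under composition, and the same device of taking $\kcal''=\theta(\kcal')$ and checking via $A$-projectivity on the non-root nodes that it is a $\parr$-variant of $\kcal'$. Your two preliminary facts are also sound, and your direct semantic arguments for $\vdash A\to A^\dagger$ and for $\vdash \ta{}(A)\to A^\dagger$ (taking $\ta{}(\kcal)_w$ itself as the witnessing model of $\pbold,A$, of which $\kcal_w$ is a $\parr$-submodel) are if anything more self-contained than the paper's, which routes the first of these through \cref{Gamma-projectivity-pres}.

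The gap is in $(3)\Rightarrow(1)$, and it is exactly the step you flag as ``the main obstacle.'' The paper's induction on the height of $\kcal$ reduces everything to the following situation: $\ta{}(\kcal)\Vdash^- A$ by induction hypothesis, and from $\ta{}(\kcal),w_0\Vdash A^\dagger$ one extracts, via \cref{Theorem-NNIL-Submodel2} together with \cref{Remark-embed-sub,Remark-embed-sub2} (a conversion from $\submodel\parr$ to $\submodeli\parr$ that your sketch skips but that the definition of extendibility requires), a model $\kcal_2\Vdash A$ with $\ta{}(\kcal)\submodeli\parr\kcal_2$. Applying $\parr$-extendibility to this pair yields only that \emph{some} $\parr$-variant of $\ta{}(\kcal)$ forces $A$. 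The decisive remaining claim --- that this already forces $\ta{}(\kcal)$ \emph{itself} to satisfy $A$ --- is \cref{Lem-Ghil-1}, the relative form of Ghilardi's variant lemma, and it is the only point where the particular shape and ordering of the substitutions $\ta{\xbold_{\!i}}$ plays a role. Your sentence about the ordering of the $\xbold_{\!i}$ ``keeping the pasting consistent'' names this issue without proving it, and your picture of ``pasting $\kcal^\star$ on top of the $A$-models already built'' does not match what extendibility delivers: it is applied to all of $\ta{}(\kcal)$ sitting inside the external model $\kcal_2$, and it outputs a variant whose root valuation you do not control. Until the variant lemma is stated and proved (or correctly imported), $(3)\Rightarrow(1)$ is not established.
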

\begin{proof}
	$1\to 2$: 
	From the definitions of $A^\ddagger$, evidently 
	$A^\ddagger\in\NNILpar$. 
	Also  observe that 
	$\txa$ is $A$-identity. Then  
	since $A$-identity 
	substitutions are closed under composition, 
	$\ta{}$ is $A$-identity.
	\\
	$2\to 3$: Let $A\xratth A'\in\NNILpar$  
	and $\kcal\Vdash A$ and  
	$\kcal'\submodeli\parr \kcal$ and   
	$\kcal'\vdashm  A$. We are
	seeking some variant $\kcal''$ of $\kcal'$ such that 
	$\kcal''\Vdash A$.  
	Let $\kcal''=\theta(\kcal')$. 
	First note that 
	since  $\theta$ is $A$-identity, $\kcal''$ is a 
	$\parr$-variant of $\kcal'$. 
	Since $\kcal\Vdash A'$,  
	$A'\in\NNILpar$ and $\kcal'\submodeli\parr\kcal$, 
	\Cref{Theorem-NNIL-Submodel}  implies that 
	$\kcal'\Vdash A'$. Hence 
	$\kcal'\Vdash \theta(A)$, and by 
	\Cref{Lem-sub-swap}
	we have $\kcal''\Vdash A$.
	\\
	$3\to 1$: Let $A$ be $\parr$-subextendible. We
	show $ \vdash A^\ddagger\lr \ta{}(A)$. 
	We use induction on the height of 	Kripke model $\kcal$
	and show $\kcal\Vdash A^\ddagger\lr \ta{}(A)$. 
	Suppose	that $w_0$ is the root of $\kcal$. 
	By the induction hypothesis 
	$\kcal\vdashm  A^\ddagger\lr \ta{}(A)$, 
	and since $A^\ddagger$ does not have any 
	atomic variables,  we have 
	$\kcal\vdashm  \ta{}(A^\ddagger\lr A)$.
	Then by \Cref{Lem-sub-swap}, 
	$\ta{}(\kcal)\vdashm  A^\ddagger
	\lr A$. We show  $\ta{}(\kcal)\Vdash A^\ddagger\lr A$.  
	If 	$\ta{}(\kcal)\nVdash^- A^\ddagger$, then 
	$\ta{}(\kcal)\nVdash^- A$ and hence 
	$\ta{}(\kcal),w_0\nVdash A^\ddagger$
	and 
	$\ta{}(\kcal),w_0\nVdash A$. 
	Then 
	$\ta{}(\kcal),w_0\Vdash A^\ddagger
	\lr A$ and we are done. 
	So assume that $\ta{}(\kcal)\vdashm  A\wedge A^\ddagger$. 
	It is sufficient to show the following items:
	\begin{itemize}[leftmargin=*]
		\item   $\ta{}(\kcal),w_0\Vdash A$ 
		implies 
		$\ta{}(\kcal),w_0\Vdash A^\ddagger$.  
		We use the definition of $A^\ddagger$. 
		Consider some $\pbold\subseteq\parr$.  
		Also assume that $\ta{}(\kcal),w\Vdash \pbold$ 
		for some node $w$ of $\kcal$.
		Then since $\ta{}(\kcal),w\Vdash \pbold$		
		and $\ta{}(\kcal),w\Vdash A$,   the following formula is one of the disjuncts in the definition of $A^\ddagger$:
		
		\begin{equation*}		
		\bigwedge \tbnpr{\ta{}(\kcal)_w},
		\end{equation*}
		in which $\ta{}(\kcal)_w$ is the  restriction of 
		$\ta{}(\kcal)$ to $w$ and its upward nodes.
		Obviously $\ta{}(\kcal),w\Vdash \bigwedge \tbnpr{\ta{}(\kcal)_w}$ and hence 
		$\ta{}(\kcal),w_0\Vdash A^\ddagger$.
		\item $\ta{}(\kcal),w_0\Vdash A^\ddagger$ implies 
		$\ta{}(\kcal),w_0\Vdash A$. 
		Let $\ta{}(\kcal),w_0\Vdash A^\ddagger$. 
		Also assume that 
		$\ta{}(\kcal)$ is $\pbold$-model,  
		i.e.~$\ta{}(\kcal),w_0\Vdash\pbold$ and 
		$\ta{}(\kcal),w_0\nVdash \bigvee (\parr\setminus\pbold)$. 
		Since $\ta{}(\kcal),w_0\Vdash A^\ddagger$, 
		for some 
		$\kcal_1\in\Mod{A}$ with $\kcal_1\Vdash \pbold$
		we have 
		$\ta{}(\kcal)\Vdash \tbnpr{\kcal_1}$.
		\Cref{Theorem-NNIL-Submodel2} implies that 
		$\ta{}(\kcal) \submodel\parr\kcal_1$.  Then 
		\Cref{Remark-embed-sub} implies 
		$\ta{}(\kcal) \submodelp\parr\kcal_1$
		and thus by \Cref{Remark-embed-sub2}
		there is some $\kcal_2\Vdash A$  such that 
		$\ta{}(\kcal) \submodeli\parr\kcal_2$.		
		Since $A$ is $\parr$-subextendible, there is a  
		$\parr$-variant $\kcal'$ of $ \ta{}(\kcal)$
		such that $\kcal'\Vdash A$.		 
		Thus
		\Cref{Lem-Ghil-1} implies  $\ta{}(\kcal)\Vdash A$.
		\qedhere
	\end{itemize}
\end{proof}
\begin{corollary}\label{Decid-nnilpar-proj}
$\altNNILpar$-projectivity is decidable. In other words,
given $A\in\lcalz$, one may algorithmically decide $A\in\altdnnilpar$.
\end{corollary}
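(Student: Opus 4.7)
The plan is to convert the semantic characterization given by \cref{Theorem-Ghil-Ext} into a decidable syntactic test. Condition~1 of that theorem asserts that $A\in\dnnilpar$ iff $A\xtra{\ta{}}{}A^\dagger$, where $\ta{}$ is the substitution built canonically from $A$ as the composition $\ta{\xbold_s}\cdots\ta{\xbold_1}$ over the (finitely many) subsets $\xbold_i\subseteq\varr$. This supplies a single canonical candidate witness for $\NNILpar$-projectivity, depending only on $A$.

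First I would verify that $\ta{}$ is always $A$-projective, independently of whether $A$ itself is projective. For each $\xbold\subseteq\varr$, the substitution $\txa$ satisfies $A\vdash x\lr\txa(x)$ for every atomic $x$ by direct inspection of the three cases $x\in\xbold$, $x\in\varr\setminus\xbold$, and $x\in\parr$. Moreover $A$-projectivity is preserved under composition: if $A\vdash x\lr\sigma(x)$ and $A\vdash x\lr\theta(x)$ for every atomic $x$, then substituting $\theta(x_i)$ for $x_i$ inside $\sigma(x)$ yields $A\vdash \sigma(x)\lr\theta(\sigma(x))$, and hence $A\vdash x\lr\theta(\sigma(x))$. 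Thus $\ta{}$ is effectively computable from $A$ and is $A$-projective.

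Given this, \cref{Theorem-Ghil-Ext} reduces the question $A\in\dnnilpar$ to: is $\ta{}(A)$ $\IPC$-equivalent to some proposition in $\NNILpar$? By \cref{Remark-NNIL-finiteness}, $\NNILpar$ is finite modulo $\IPC$-provable equivalence, with an explicit recursive upper bound on the syntactic size of representatives. So I would enumerate a complete list $B_1,\ldots,B_N$ of $\NNILpar$-representatives in $\parr$ and, using the decidability of $\IPC$, test $\vdash\ta{}(A)\lr B_i$ for each $i$; the answer is ``yes'' iff the test succeeds for some $i$.

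I do not expect any deep obstacle here: the substantive work has already been done in \cref{Theorem-Ghil-Ext}, which replaces the existential quantifier over substitutions inherent in the definition of $\NNILpar$-projectivity by a fixed canonical witness $\ta{}$. All that remains is to turn the condition ``$\ta{}(A)\in\NNILpar$ up to $\IPC$-provable equivalence'' into a decidable question, and this falls out immediately from decidability of $\IPC$ combined with the finite-modulo-equivalence bound of \cref{Remark-NNIL-finiteness}.
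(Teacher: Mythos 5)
Your proof is correct and follows essentially the same route as the paper: both reduce the problem via \cref{Theorem-Ghil-Ext} to a decidable $\IPC$-question about the canonical substitution $\ta{}$. The only (harmless) difference is that the paper tests $\vdash \ta{}(A)\lr A^\dagger$ against the explicit formula $A^\dagger$, whereas you test equivalence of $\ta{}(A)$ to each member of a finite enumeration of $\NNILpar$ modulo $\IPC$-equivalence via \cref{Remark-NNIL-finiteness}, which conveniently sidesteps having to compute $A^\dagger$ itself.
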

\begin{proof}
Given $A$, by \Cref{Theorem-Ghil-Ext} 
it is sufficient to decide $\IPC\vdash \ta{}(A)\lr A^\ddagger$, which 
is decidable since $\IPC$ is decidable.
\end{proof}

\subsection{Projective resolution}\label{proj-res}
The main result in \citep{Ghil99} is that 
the unification type 
of $\IPC$ is finitary. It means that for every 
$A\in\lcalzvar$, there exists 
a finite \textit{complete set of unifiers} for $A$, i.e.~a finite set 
$\Theta$ of unifiers for $A$ such that
every unifier of $A$ is less general than 
some $\theta\in \Theta$.  We say that $\theta$ is less general than $\gamma$ if there is some substitution 
$\lambda$ such that for every $x\in\varr$ we have 

\begin{equation*} 
\vdash \theta(x)\lr\lambda(\gamma(x)).
\end{equation*}
The proof of the above mentioned fact is based on projective approximations. 
Later  \citep{ghilardi2002resolution} provides a resolution/tableaux method for computation of the projective approximations.
The aim for this subsection is providing
a relativised version 
of projective approximations
in \Cref{Theorem-IPC-nnilp-Finitary}.
\begin{definition}\label{def-proj-res}
Given  $\Gamma,\Pi\subseteq\lcalz$ and $A\in\lcalz$, 
we say that $\Pi$  is 
$\Gamma$-projective resolution for $A$ if 
\begin{itemize}
\item $ \Pi $ is a \sixtaghir{finite} set of independent formulas, i.e.~for 
$ B,C\in\Pi $, $ \vdash B\to C $ implies $ B=C $. 
\item Every $B\in \Pi$ is $\Gamma$-projective.
\item $ A\arg\bigvee\Pi $. 
\item   $ \vdash \bigvee\Pi\to A$.
\end{itemize}
A  $\{\top\}$-projective resolution is also called 
projective resolution.
\end{definition}
\noindent  
Note that $\emptyset $ is a projective resolution for 
a formula which is not unifiable.
The greatest lower bound (glb) for a formula $A$ is defined in \Cref{glb}. Intuitively 
a glb for $A$ w.r.t.~$(\Gamma,\sft)$ is the best 
$\Gamma$-approximation from below inside the logic $\sft$.
\begin{remark}\label{remark-resol-glb}
If $\Pi$ is $\Gamma$-projective resolution of $A$ then 
$\bigvee\Pi$ is a glb for $A$ w.r.t.~$(\dgv,\IPC)$.
\end{remark}
\begin{proof}
By \Cref{pres-admis-rel} we have $A\prtdg\bigvee\Pi$ and hence by \Cref{vee-pres} we have $A\prtdgv\bigvee\Pi$.
Thus
\Cref{Gamma-approx-preserv} implies desired result.
\end{proof}
\begin{theorem}\label{Theorem-IPC-Finitary}
Whenever  $\parr=\emptyset$,
every  $A\in\lcalz $ has   projective
resolution. 
\end{theorem}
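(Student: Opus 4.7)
The plan is to reduce to Ghilardi's projective approximation theorem \citep{Ghil99} and phrase it in the current terminology. When $\parr=\emptyset$, we have $\NNILpar=\{\top,\bot\}$ up to $\IPC$-provable equivalence, so ``$\NNILpar$-projective'' collapses to classical projectivity in the sense of Ghilardi (for unifiable $A$), and the first two clauses of \cref{def-proj-res} simplify accordingly. First I would dispose of the easy case: if $A$ is not unifiable, then $\Pi:=\emptyset$ satisfies \cref{def-proj-res} trivially, since $\bigvee\emptyset=\bot$ and any substitution falsifying $A$ gives $A\arg\bot$ (by contraposition of admissibility).

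For the main case I would proceed by induction along a well-founded measure on $A$, combined with the Kripke-semantic characterization provided by \cref{Theorem-Ghil} (the $\parr=\emptyset$ specialization of \cref{Theorem-Ghil-Ext}). The inductive hypothesis: every proposition strictly simpler than $A$ admits a projective resolution. If $A$ is itself extendible, then \cref{Theorem-Ghil} gives projectivity of $A$, and $\Pi:=\{A\}$ is the resolution. Otherwise, fix a witness $(\kcal,\kcal')$ of non-extendibility, i.e.\ $\kcal\Vdash A$ and $\kcal'\submodeli{}\kcal$ with $\kcal'\Vdash^- A$ but no variant of $\kcal'$ forces $A$ at its root. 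Enumerate, up to bisimulation of bounded depth, the finitely many candidate variants $\kcal'_1,\dots,\kcal'_k$ of $\kcal'$, and for each construct a refinement $A_i$ of $A$ that forbids exactly this defect while remaining implied by $A$ on projective models of $A$. Take $A_0,\dots,A_n$ to be the resulting finite list of strict strengthenings of $A$ with $\vdash A_i\to A$ and $A\arg \bigvee_i A_i$.

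By the inductive hypothesis each $A_i$ has a projective resolution $\Pi_i$. I would then set $\Pi:=\bigcup_i\Pi_i$, prune it to obtain an independent set (dropping any $B\in\Pi$ such that $\vdash B\to B'$ for some distinct $B'\in\Pi$), and verify the four clauses of \cref{def-proj-res}: each member of $\Pi$ is projective (already true for each $\Pi_i$); $\vdash \bigvee\Pi\to A$ follows by combining $\vdash\bigvee\Pi_i\to A_i$ with $\vdash A_i\to A$; and $A\arg\bigvee\Pi$ follows by composing $A\arg\bigvee_i A_i$ with $A_i\arg\bigvee\Pi_i$ through transitivity of admissibility and \cref{pres-admis-asce}.

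The principal obstacle is guaranteeing termination of the induction, for which one needs the right complexity measure. The natural choice is the number of Kripke models of $A$ (up to bisimulation within a fixed signature and some bounded depth controlled by $A$); the measure strictly decreases because each $A_i$ excludes at least one of the finitely many Kripke models witnessing non-extendibility of $A$. Verifying this bookkeeping is exactly the content of Ghilardi's combinatorial lemma in \citep{Ghil99}, which in the $\parr=\emptyset$ setting applies verbatim. A small subtlety is ensuring that the pruned $\Pi$ is genuinely independent while still satisfying $A\arg\bigvee\Pi$ and $\vdash\bigvee\Pi\to A$; this is routine since removing a $B$ dominated by $B'\in\Pi$ does not alter the disjunction up to $\IPC$-provable equivalence.
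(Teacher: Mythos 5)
There is a genuine gap at the heart of your induction. The entire content of the theorem is the step where a non-extendible $A$ is replaced by finitely many strict strengthenings $A_1,\dots,A_n$ with $\vdash A_i\to A$ and $A\arg \bigvee_i A_i$: you never actually construct the $A_i$, and you never argue why every unifier of $A$ must unify one of them --- you only say each $A_i$ should ``forbid exactly this defect while remaining implied by $A$ on projective models of $A$''. Asserting this family with the stated properties is essentially asserting the theorem, so the proposal is circular at its key step. The termination argument is also broken as written: you take as measure the number of Kripke models of $A$ up to bounded bisimulation and claim it strictly decreases because each $A_i$ ``excludes at least one of the finitely many Kripke models witnessing non-extendibility of $A$''; but the witnessing model $\kcal'$ (no variant of which forces $A$ at the root) is not a model of $A$ to begin with, so excluding it cannot shrink $\Mod{A}$. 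In addition, the bisimulation depth would have to be fixed uniformly across the whole induction while your refinements may raise implicational complexity; none of that bookkeeping is supplied, and it is not ``the content of Ghilardi's combinatorial lemma'' --- a refinement-style proof does exist (it is essentially Ghilardi's 2002 resolution/tableaux algorithm, and \cref{star-prop} in this paper runs a similar recursion with the carefully designed measure $\ofrak{A}$), but making it work requires precisely the pieces you omit.

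For contrast, the paper disposes of this statement by citing \citep[theorem~5]{Ghil99} and then proves the generalization \cref{Theorem-IPC-nnilp-Finitary} by a direct, non-inductive construction: for each unifier $\theta$ of $A$ one forms the class $\scrk:=\theta(\Mod{\top})$, takes its $\leq_n$-downward closure $\scrkn$ for $n:=\max\{\cto(A),1\}$, and uses \cref{Lem-Mod(A)-Kripke} to obtain a defining formula $\bat$ with $\cto(\bat)\leq n$; \cref{Lem-extendible,Lem-stable-extendible,Theorem-Ghil-Ext} show $\bat$ is extendible and hence projective, one checks $\vdash\bat\to A$ and that $\theta$ unifies $\bat$, and finiteness of the resulting set follows from the uniform bound on $\cto$ via \cref{Remark-Finiteness-c(A)}. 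That route avoids both the refinement construction and the termination issue that your sketch leaves open.
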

\begin{proof}
See \citep[theorem 5]{Ghil99}.
We will also prove a generalization of this
result in \Cref{Theorem-IPC-nnilp-Finitary}. 
\end{proof}

\noindent
First some preliminary definitions. We refer the reader 
to \citep{Ghil99} for more information on these notions.

Let 
$\kcal=(W,\pce,\V)$ and $\kcal'=(W',\pce',\V')$
are two Kripke models with the roots 
$w_0$ and $w'_0$. Also let 
$\kcal(w):=\{a\in\atom : \kcal,w\Vdash a\} $ and  $\lcalz(\kcal)$ be defined as $\lcalz(\bigcup_{w\in W}\kcal(w))$.
We say that  $\kcal$ \textit{has finite valuations} if for every 
$w\in W$ we have $\kcal(w)$ is finite. Also define:

\begin{align*}
\kcal\sim_0 \kcal' \quad &\text{iff}\quad 
\kcal(w_0)=\kcal'(w_0')\\
\kcal\sim_{n+1} \kcal' \quad &\text{iff}\quad 
\fa w\in W\,\ex w'\in W' (\kcal_w\sim_n\kcal'_{w'})
\text{ and  vice versa}\\
\kcal\leq_0\kcal' \quad &\text{iff} \quad 
\kcal(w_0)\supseteq
\kcal'(w'_0)\\
\kcal\leq_{n+1}\kcal' \quad 
&\text{iff} \quad 
\fa w\in W\,\ex w'\in W' (\kcal_w\sim_n\kcal'_{w'})
\end{align*}
Evidently $\sim_n$ is an equivalence relation and 
$\leq_n$ is reflexive transitive. One may easily 
observe by induction on $n$ that $\kcal\sim_{n+1}\kcal'$
implies $\kcal\sim_n\kcal'$. Hence 
$\kcal\sim_n\kcal'$ ($\kcal\leq_n\kcal'$) implies 
$\kcal\sim_m\kcal'$ ($\kcal\leq_m\kcal'$) for every $m\leq n$. 
\\
Let $\cto(A)$ indicate the maximum number of nested implications in $A$:
\begin{itemize}
\item $\cto(a)=\cto(\top)=\cto(\bot)=0$ for atomic $a$.
\item $\cto(A\circ B):=\max\{\cto(A),\cto(B)\}$, for $\circ\in\{\vee,\wedge\}$.
\item $\cto(A\to B):=1+\max\{\cto(A),\cto(B)\}$.
\end{itemize}
Remember that by default we assume the set $\atom$ to be a finite set. 
\begin{remark}\label{Remark-Finiteness-c(A)}
Modulo $\IPC$-provable equivalence, there are finitely many 
formulas $A\in \lcalz$ with $\cto(A)\leq n$.
\end{remark}
\begin{proof}
By induction on $n$, we define an upper bound $f(n)$
for the number of formulas $A\in\lcalz$ with 
$\cto(A)\leq n$. 
\begin{enumerate}[leftmargin=*]
\item $f(0):$ Observe that any $A$ with $\cto(A)=0$ is $\IPC$-equivalent to a disjunction of conjunctions of atoms. 
Hence $f(0)=2^{2^m}$ is an obvious upper bound, 
in which $m$ is the number of atoms in $\atom$.
\item $f(n+1):$ For every implication $B\to C$
with $\cto(B\to C)\leq n+1$, we have $\cto(B),\cto(C)\leq n$,
and hence $f(n)^2$ is an upper bound for the number of 
inequivalent such formulas. Then since 
\mipc
every formula is a disjunction of conjunctions of atoms or implications, the following definition is an upperbound:

\begin{equation*}
f(n+1):=2^{2^{[m+f(n)^2]}}.\qedhere
\end{equation*}
\end{enumerate}
\end{proof}

\begin{lemma}\label{Lem-nnilpn-c(n)}
Every $A\in\NNIL$ has an $\IPC$-provable equivalent 
$A'\in\NNIL$ with $\cto(A')\leq \#\atom$. 
\end{lemma}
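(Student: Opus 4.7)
My plan is to rewrite $A$ into an equivalent $\NNIL$ formula in a normal form where (i) every implication has a single atom as antecedent and (ii) along any descending chain of nested implications in the formula tree, the antecedent atoms are pairwise distinct. Once both conditions are met, such a chain passes through at most $\#\atom$ distinct atoms, so $\cto(A')\le\#\atom$.

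To achieve (i), I would normalize $A$ recursively using the $\IPC$-equivalences
\[
(B_1\vee B_2)\to C\equiv (B_1\to C)\wedge (B_2\to C),\qquad (B_1\wedge B_2)\to C\equiv B_1\to (B_2\to C),
\]
\[
\top\to C\equiv C,\qquad \bot\to C\equiv\top,
\]
each of which preserves $\NNIL$ (disjuncts and conjuncts of an $\NI$ formula are again $\NI$). Normalizing the consequent first and then applying these rules with an inner recursion on the size of the antecedent guarantees termination and leaves every implication in the form $a\to C'$ with $a\in\atom$. To achieve (ii), I would then repeatedly replace an implication $a\to\psi$ by $\psi$ whenever it occurs inside the scope of some enclosing implication with the same atomic antecedent $a$. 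Each such rewrite strictly decreases the number of implication subformulas, so the process terminates, and it preserves $\NNIL$ since it only erases implications.

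The main technical point will be the soundness of the Phase~2 rewrite, because the local equivalence $a\to\psi\equiv\psi$ holds only under the hypothesis $a$. I would derive $\vdash a\to\bigl((a\to\psi)\lr\psi\bigr)$ and then invoke the replacement theorem inside the scope of the outer $a\to$ to lift the equivalence up through the intervening $\vee,\wedge$ context to a genuine provable equivalence of the two whole formulas. Once both phases have terminated, if any branch were still to cross two implications sharing the same antecedent atom, the innermost of them would be subject to a further Phase~2 rewrite, contradicting termination; hence every branch through $k$ nested implications lists $k$ pairwise distinct atoms of $\atom$, forcing $k\le\#\atom$ and therefore $\cto(A')\le\#\atom$.
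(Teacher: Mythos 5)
Your proposal is correct and follows essentially the same route as the paper: bring the $\NNIL$ formula to a normal form in which every implication has an atomic antecedent and no antecedent atom recurs along a chain of nested implications, then read off the bound $\cto(A')\le\#\atom$ from the distinctness of those atoms. The paper leaves both normalization steps as an "observe" plus an induction on $\#\atom$, whereas you spell out the rewrite system, its termination, and the soundness of the collapse $a\to\psi\equiv\psi$ under the hypothesis $a$ via replacement; this is a legitimate filling-in of the same argument rather than a different one.
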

\begin{proof}
\sixtaghir{First we show that} every $A\in\NNIL$ has an $\IPC$-equivalent $B\in\NNIL$ 
such that \sixtaghir{in $B$ only atoms of $A$ are appeared and} $B=\bigwedge_i\bigvee_j B_i^j$
and every implication in 
$B^j_i$ is of the form $a\to C$, with $a\in\atom$ and 
$C$ does not contain $a$.  
\sixtaghir{We will prove the above claim by induction on complexity of $A$. The cases for atomic, conjunction and disjunction are easy and left to the reader. So assume that $A=E\to F$. Since $A\in\NNIL$, $E$ does not have implications. 
Hence it is equivalent to some formula of the form $\bigvee\Gamma $ such that every $D\in\Gamma$ is of the form $\bigwedge \Delta_D$ and $\Delta_D$ is a non-empty set of atomic formulas for every $D\in \Gamma$. 
Thus $A$ is equivalent to $\bigwedge_{D\in \Gamma}(\bigwedge \Delta_D\to F)$. 
For every $D\in \Gamma$, fix some  $p_D\in \Delta_D$ and let 
$\Delta'_D:=\Delta_D\setminus\{p_D\}$. Hence $A$ is equivalent to 
$\bigwedge_{D\in\Gamma}(p_D\to (\bigwedge\Delta'_D\to F[p_D:\top]))$, in which 
$F[p_D:\top]$ is the replacement of $\top$ for $p_D$ in $F$. This formula is of required shape and we are done.} 

Then one may easily prove the statement of this lemma by induction on the number of elements in $\atom$: \sixtaghir{by induction hypothesis, for every $B^i_j=p\to C$, we have $\cto(C)\leq \#\atom -1$ and hence $\cto(B^i_j)\leq\#\atom$. This implies 
$\cto(B)\leq \# \atom$.}
\end{proof}

\begin{lemma}\label{Lem-Characteristic-Kripke}
For every  Kripke model $\kcal$, 
there exists a formula $\charkn\in\lcalz$
 with the following properties:
\begin{itemize}
\item $\kcal'\Vdash \charkn$ iff $\kcal'\leq_n \kcal$.
\item $\cto(\charkn)\leq n$.
\end{itemize}
\end{lemma}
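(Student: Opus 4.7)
The plan is to proceed by induction on $n$, aiming for the slightly stronger claim that the $\cto\leq n$ fragment is \emph{complete} for $\leq_n$: for any Kripke models $\kcal$ and $\kcal''$,
\[
\kcal''\leq_n \kcal\quad\text{iff}\quad \tau_n(\kcal)\subseteq \tau_n(\kcal''),
\]
where $\tau_n(\kcal'') := \{A\in \lcalz : \cto(A)\leq n,\ \kcal''\Vdash A\}$. By \cref{Remark-Finiteness-c(A)}, $\tau_n(\kcal'')$ is finite modulo $\IPC$-provable equivalence, so the lemma then follows by taking
$\charn{\kcal} := \bigwedge \tau_n(\kcal)$: it is a formula of $\cto\leq n$ with $\kcal'\Vdash \charn{\kcal}$ iff $\tau_n(\kcal)\subseteq \tau_n(\kcal')$ iff, by the claim, $\kcal'\leq_n \kcal$.

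The base case $n=0$ is immediate from $\charac{\kcal}{0}=\bigwedge \kcal(w_0)$. Throughout the induction I will use the easy fact that $\sim_{n+1}\Rightarrow\sim_n$ and $\leq_{n+1}\Rightarrow\leq_n$, so that a level-$(n+1)$ witness serves at lower levels. For the forward direction at level $n+1$, suppose $\kcal'\leq_{n+1}\kcal$ and $A\in\tau_{n+1}(\kcal)$. Induction on the shape of $A$ handles the Boolean cases, and atomic preservation follows from $\sim_n\Rightarrow\sim_0$; the nontrivial case is an implication $A=B\to D$ with $\cto(B),\cto(D)\leq n$. For any $w'\in W'$ forcing $B$, the hypothesis yields $w\in W$ with $\kcal'_{w'}\sim_n \kcal_w$; by the IH this equates their $\tau_n$-theories, so $\kcal_w\Vdash B$, hence $\kcal_w\Vdash D$, and finally $\kcal'_{w'}\Vdash D$.

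The converse direction is where the construction happens. Given $\kcal'\not\leq_{n+1}\kcal$, fix $w'\in W'$ such that $\kcal'_{w'}\not\sim_n \kcal_w$ for every $w\in W$, and set $\alpha := \charn{\kcal'_{w'}}$; by the IH, for each $v\in W$ the condition $\kcal_v\Vdash \alpha$ is equivalent to $\kcal_v\leq_n \kcal'_{w'}$. Combined with $\kcal_v\not\sim_n \kcal'_{w'}$ this forces the strict inclusion $\tau_n(\kcal'_{w'})\subsetneq \tau_n(\kcal_v)$, so I may pick some $B_v\in \tau_n(\kcal_v)\setminus \tau_n(\kcal'_{w'})$. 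Then
\[
A := \alpha\to \bigvee_{v\in W,\ \kcal_v\Vdash \alpha} B_v
\]
(with empty disjunction read as $\bot$) has $\cto\leq n+1$, is forced by $\kcal$ because each $\alpha$-forcing $u\in W$ forces its own $B_u$, and is refuted in $\kcal'$ at $w'$ because $\kcal'_{w'}\Vdash \alpha$ yet fails every $B_v$, using the disjunction property at $w'$ in the tree model. Persistence then transfers the refutation to the root $w'_0$, giving $A\in \tau_{n+1}(\kcal)\setminus \tau_{n+1}(\kcal')$.

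The main obstacle I anticipate is the bookkeeping around the two-sidedness of $\sim_n$ versus the one-sidedness of $\leq_n$ and the order-reversing link with $\tau_n$-inclusion; the converse direction rests on using the IH as a \emph{biconditional} characterisation of $\leq_n$, so that the simultaneous $\leq_n$ and $\not\sim_n$ forces a strict $\tau_n$-separation and thereby produces the distinguishing formula $B_v$.
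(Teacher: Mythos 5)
Your proof is correct, but it takes a different route from the paper, which simply writes down the explicit recursive characteristic formula
$\charac{\kcal}{n+1}:=\bigwedge_{\{\kcal':\fa w(\kcal'\nsim_n\kcal_w)\}}\bigl(\charac{\kcal'}{n}\to\bigvee_{\{\kcal'':\kcal'\nleq_n\kcal''\}}\charac{\kcal''}{n}\bigr)$ and defers the verification to Ghilardi's Proposition~1. You instead define $\charn{\kcal}$ non\-recursively as $\bigwedge\tau_n(\kcal)$ (legitimate by \cref{Remark-Finiteness-c(A)}) and carry the induction on the \emph{semantic} statement ``$\kcal''\leq_n\kcal$ iff $\tau_n(\kcal)\subseteq\tau_n(\kcal'')$'', which is exactly \cref{Cor-Kripke-bisim-property}; the lemma then falls out as a corollary, reversing the paper's order of deduction (the paper derives that corollary \emph{from} the lemma). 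There is no circularity: your distinguishing formula $\alpha\to\bigvee_v B_v$ only invokes the level-$n$ instance of your own induction hypothesis, and it is essentially one conjunct of Ghilardi's formula, localised to the single witnessing node $w'$ rather than taken uniformly over all $\sim_n$-classes unrealised in $\kcal$. The details all check: the forward direction correctly reduces the implication case to the level-$n$ biconditional via the equality $\tau_n(\kcal'_{w'})=\tau_n(\kcal_w)$, and in the converse direction the combination of $\kcal_v\leq_n\kcal'_{w'}$ with $\kcal_v\not\sim_n\kcal'_{w'}$ does yield the strict theory inclusion needed to extract $B_v$. Two cosmetic points: refuting $\bigvee_v B_v$ at $w'$ needs only the local clause for $\vee$ in the forcing definition, not any ``disjunction property''; and your argument yields a self-contained proof of \cref{Cor-Kripke-bisim-property} as a by-product, which the paper obtains separately. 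What your approach buys is independence from the external citation; what the paper's buys is an explicitly computed formula, which is harmless here since both constructions agree up to $\IPC$-provable equivalence.
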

\begin{proof}
Here we only give the definition of $\charkn $ by induction on $n$, and refer the 
reader to \citep[proposition 1]{Ghil99} for its proof.
\\
Let
$\kcal=(W,\pce,\V)$
and define $\charac{\kcal}{0}:=\bigwedge\kcal(w_0)$ and 

\begin{equation*}
\charac{\kcal}{n+1}:=\bigwedge_{\{\kcal':\fa w\in W(\kcal'\nsim_n\kcal_w)\}} 
\left(
\charac{\kcal'}{n}\to 
\bigvee_{\{\kcal'': \kcal'\nleq_n\kcal''\}}
\charac{\kcal''}{n} 
\right)
.\qedhere
\end{equation*}
\end{proof}

\begin{corollary}\label{Cor-Kripke-bisim-property}
For every Kripke models $\kcal$ and $\kcal'$, 
we have $\kcal'\leq_n\kcal$ iff 
for every $A\in\lcalz$ with $\cto(A)\leq n$ we have 
$\kcal\Vdash A$ implies $\kcal'\Vdash A$. 
\end{corollary}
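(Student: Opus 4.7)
The plan is to handle the two directions separately. The $(\Leftarrow)$ direction falls straight out of the preceding characteristic-formula lemma, while the $(\Rightarrow)$ direction I would prove by induction on $n$ together with an inner structural induction on $A$.

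For $(\Leftarrow)$: by \cref{Lem-Characteristic-Kripke} the formula $\charkn$ satisfies $\cto(\charkn)\leq n$, and any Kripke model $\kcal''$ forces $\charkn$ iff $\kcal''\leq_n\kcal$. Since $\leq_n$ is reflexive we have $\kcal\Vdash\charkn$, so the hypothesis applied to $A:=\charkn$ yields $\kcal'\Vdash\charkn$, whence $\kcal'\leq_n\kcal$.

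For $(\Rightarrow)$: I would first record the auxiliary fact that $\sim_n$ implies both $\leq_n$ and its converse. This is clear from the definitions by a trivial induction on $n$: at level $0$ because equality of root-valuations implies inclusion in both directions, and at successor levels because the $\sim_{n+1}$-clause strengthens the $\leq_{n+1}$-clause in either direction. I would then induct on $n$ to prove: if $\kcal'\leq_n\kcal$ and $\cto(A)\leq n$, then $\kcal\Vdash A$ implies $\kcal'\Vdash A$. The base $n=0$ is a structural induction on $A$ (built from atoms using $\wedge,\vee,\bot,\top$); the atomic case uses $\kcal(w_0)\subseteq\kcal'(w'_0)$ directly, which is exactly the content of $\kcal'\leq_0\kcal$. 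For the step, let $\kcal'\leq_{n+1}\kcal$ and induct on the structure of $A$ with $\cto(A)\leq n+1$. In the atomic case, the $\leq_{n+1}$-clause at $w'_0$ produces $w\in W$ with $\kcal'_{w'_0}\sim_n\kcal_w$; the $\sim_0$-part gives $\kcal'(w'_0)=\kcal(w)$, and persistence in $\kcal$ gives $\kcal(w_0)\subseteq\kcal(w)$, so atoms propagate. The Boolean cases are immediate from the structural hypothesis.

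The critical case is $A=B\to C$ with $\cto(B),\cto(C)\leq n$, and this is where the outer IH on $n$ is used. Given $w'\in W'$ with $\kcal'_{w'}\Vdash B$, I would pick $w\in W$ with $\kcal'_{w'}\sim_n\kcal_w$ using $\kcal'\leq_{n+1}\kcal$. The auxiliary fact supplies both $\kcal_w\leq_n\kcal'_{w'}$ and $\kcal'_{w'}\leq_n\kcal_w$. Applying the outer IH to $B$ in the first direction transfers $\kcal'_{w'}\Vdash B$ to $\kcal_w\Vdash B$; since $\kcal\Vdash B\to C$ entails $\kcal_w\Vdash B\to C$ by Kripke persistence, we obtain $\kcal_w\Vdash C$; finally the outer IH applied to $C$ in the second direction transfers this back to $\kcal'_{w'}\Vdash C$, as required. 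The main obstacle is precisely this symmetric use of $\sim_n$: one has to shuttle the premise $B$ from $\kcal'_{w'}$ into $\kcal_w$ and then the conclusion $C$ back from $\kcal_w$ into $\kcal'_{w'}$, using the two directions of $\sim_n\Rightarrow{\leq_n}$ at strictly lower complexity.
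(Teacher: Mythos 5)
Your proposal is correct and follows essentially the same route as the paper: the right-to-left direction via the characteristic formula $\charkn$ of \cref{Lem-Characteristic-Kripke} (using reflexivity of $\leq_n$ to get $\kcal\Vdash\charkn$), and the left-to-right direction by induction on $n$, which the paper states without detail and you have filled in correctly, including the key shuttle through $\sim_n\Rightarrow{\leq_n}$ in both directions for the implication case. No gaps.
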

\begin{proof}
For the left to right direction, use induction on $n$. 
For the right to left, \sixtaghir{use \Cref{Lem-Characteristic-Kripke}.}
\end{proof}
\begin{corollary}\label{Corollary-n-sim-Kripke}
$\kcal'\sim_n\kcal$ iff for every $A$ with $\cto(A)\leq n$
we have 
\begin{center}
$\kcal\Vdash A$ iff $\kcal'\Vdash A$.
\end{center}
\end{corollary}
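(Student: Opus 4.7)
The plan is to reduce this corollary directly to \cref{Cor-Kripke-bisim-property}. The bridge is the observation
\begin{equation*}
\kcal'\sim_n\kcal \quad\text{iff}\quad \kcal'\leq_n\kcal \text{ and } \kcal\leq_n\kcal',
\end{equation*}
which I would establish by a short induction on $n$. For the base case, $\sim_0$ demands the equality $\kcal'(w'_0)=\kcal(w_0)$ of root valuations, while $\leq_0$ in both directions gives both containments $\kcal'(w'_0)\supseteq\kcal(w_0)$ and $\kcal(w_0)\supseteq\kcal'(w'_0)$; these are the same assertion. For the inductive step, the definition of $\sim_{n+1}$ is a conjunction of two quantifier blocks: the first, $\fa w\in W\,\ex w'\in W'(\kcal_w\sim_n\kcal'_{w'})$, is literally the definition of $\kcal\leq_{n+1}\kcal'$; the second, $\fa w'\in W'\,\ex w\in W(\kcal_w\sim_n\kcal'_{w'})$, is the definition of $\kcal'\leq_{n+1}\kcal$ once one uses that $\sim_n$ is symmetric (which is an immediate additional induction, or simply transparent from the definition).

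Once the bridge is in place, the corollary follows by applying \cref{Cor-Kripke-bisim-property} in each direction. From $\kcal\leq_n\kcal'$ one reads off that $\kcal'\Vdash A$ implies $\kcal\Vdash A$ for every $A$ with $\cto(A)\leq n$, and from $\kcal'\leq_n\kcal$ one reads off the converse implication. Conjoining gives the biconditional $\kcal\Vdash A\iff \kcal'\Vdash A$ for all such $A$. Conversely, if the biconditional holds for every $A$ with $\cto(A)\leq n$, then both one-sided implications hold, so by \cref{Cor-Kripke-bisim-property} we obtain $\kcal\leq_n\kcal'$ and $\kcal'\leq_n\kcal$, hence $\kcal'\sim_n\kcal$ via the bridge.

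There is essentially no obstacle; the only point requiring a moment of care is verifying that the second quantifier clause in the definition of $\sim_{n+1}$ really is $\kcal'\leq_{n+1}\kcal$ and not some mismatched variant, which is resolved by the symmetry of $\sim_n$. Given the brevity of the argument, I would simply write it out in two or three lines and, if desired, leave details to the reader.
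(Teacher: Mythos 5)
Your proposal is correct and follows exactly the paper's proof: the paper likewise first observes that $\kcal\sim_n\kcal'$ is equivalent to $\kcal\leq_n\kcal'\leq_n\kcal$ and then invokes \cref{Cor-Kripke-bisim-property} in both directions. Your additional care about the base case and the symmetry of $\sim_n$ in the successor case just fills in the detail the paper leaves implicit.
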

\begin{proof}
First observe that $\kcal\sim_n \kcal'$ is equivalent to 
$\kcal\leq_n\kcal'\leq_n\kcal$ and then use 
\Cref{Cor-Kripke-bisim-property}.
\end{proof}
\begin{lemma}\label{Lem-Mod(A)-Kripke}
A class  $\scrk$ of Kripke models 
is of the form $\Mod{A}$ 
with $\cto(A)\leq n$, iff $\scrk$ is $\leq_n$-downward 
closed, i.e.~for every Kripke model $\kcal'$ 
with 
$\kcal'\leq_n\kcal\in\scrk$  we have $\kcal'\in\scrk$.
\end{lemma}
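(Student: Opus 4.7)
The plan is to prove the two directions separately, leveraging the characteristic formulas $\charn{\kcal}$ introduced in \cref{Lem-Characteristic-Kripke}, together with the bounded-complexity finiteness provided by \cref{Remark-Finiteness-c(A)}.

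For the forward direction, suppose $\scrk=\Mod{A}$ with $\cto(A)\leq n$, and let $\kcal'\leq_n \kcal\in\scrk$. Then $\kcal\Vdash A$, and \cref{Cor-Kripke-bisim-property} yields $\kcal'\Vdash A$, hence $\kcal'\in\scrk$. This is essentially a one-line invocation of the bisimulation-style corollary.

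For the backward direction, assume $\scrk$ is $\leq_n$-downward closed. By \cref{Remark-Finiteness-c(A)}, modulo $\IPC$-provable equivalence there are only finitely many propositions with $\cto\leq n$; in particular the set $\{\charn{\kcal}:\kcal\in\scrk\}$ is finite up to equivalence. Choose representatives $\kcal_1,\ldots,\kcal_k\in\scrk$ so that every $\charn{\kcal}$ with $\kcal\in\scrk$ is $\IPC$-equivalent to some $\charn{\kcal_i}$, and set
$$A:=\bigvee_{i=1}^{k}\charn{\kcal_i}$$
(with the convention $A:=\bot$ if $\scrk=\emptyset$). Then $\cto(A)\leq n$ by the second clause of \cref{Lem-Characteristic-Kripke}. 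To show $\scrk=\Mod{A}$: if $\kcal\in\scrk$ then $\kcal\leq_n\kcal$ (reflexivity of $\leq_n$), so $\kcal\Vdash\charn{\kcal}$ by the first clause of \cref{Lem-Characteristic-Kripke}; since $\charn{\kcal}$ is equivalent to some $\charn{\kcal_i}$, we have $\kcal\Vdash A$. Conversely, if $\kcal\Vdash A$ then $\kcal\Vdash\charn{\kcal_i}$ for some $i$, whence $\kcal\leq_n\kcal_i\in\scrk$, and $\leq_n$-downward closure gives $\kcal\in\scrk$.

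The only real subtlety is ensuring that the disjunction defining $A$ is finite, which is why \cref{Remark-Finiteness-c(A)} is used to prune $\{\charn{\kcal}:\kcal\in\scrk\}$ to finitely many representatives; everything else is a direct translation of the two bullets in \cref{Lem-Characteristic-Kripke}. I do not expect any genuine obstacle here.
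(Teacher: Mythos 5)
Your proof is correct and follows essentially the same route as the paper: the forward direction is the same one-line appeal to \cref{Cor-Kripke-bisim-property}, and the backward direction builds the same disjunction of characteristic formulas $\charn{\kcal}$ over $\scrk$, using \cref{Remark-Finiteness-c(A)} to keep it finite. You merely spell out the verification of $\scrk=\Mod{A}$ that the paper leaves to the reader, so nothing further is needed.
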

\begin{proof}
For the left-to-right direction, let $\kcal'\leq_n\kcal\in\Mod{A}$ for some $A$ with $\cto(A)\leq n$. Since 
$\kcal\Vdash A$, \Cref{Cor-Kripke-bisim-property} implies 
$\kcal'\Vdash A$ and hence $\kcal'\in\Mod{A}$. 
\\
For the other direction, let $\scrk$ be 
$\leq_n$-downward closed and define 

\begin{equation*}
A:=\bigvee_{\kcal\in\scrk}\charac{\kcal}{n}.
\end{equation*}
By 
\Cref{Remark-Finiteness-c(A)}, the disjunction is finite 
and hence $A$ is indeed a formula.  
One may easily observe that \Cref{Lem-Characteristic-Kripke} implies that 
$\cto(A)\leq n$  and 
$\scrk=\Mod{A}$.
\end{proof}

\begin{lemma}\label{Lem-extendible}
If a class of Kripke models $\scrk$ is $\parr$-subextendible and $\theta$
is a substitution, then $\theta(\scrk)$ 
is also $\parr$-subextendible.
\end{lemma}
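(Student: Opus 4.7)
The plan is to unfold the definition of $\parr$-extendibility for $\theta(\scrk)$ and lift the configuration back to $\scrk$, apply extendibility there, then push the witness forward through $\theta$. So I would begin by fixing a finite $\scrm' \subseteq \theta(\scrk)$ with $\scrm' \submodeli{\parr} \nfrak \in \theta(\scrk)$. Choose representatives: write $\scrm' = \{\theta(\kcal_1),\dots,\theta(\kcal_n)\}$ and $\nfrak = \theta(\kcal)$ for suitable $\kcal_1,\dots,\kcal_n,\kcal \in \scrk$.

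The single observation that drives everything is that, by our default convention, $\theta$ fixes every parameter, so $\theta(\mfrak)$ shares the frame of $\mfrak$ and agrees with $\mfrak$ on the valuation of each $p \in \parr$ at every world. Consequently the $\parr$-submodel relation is insensitive to $\theta$: from $\theta(\kcal_i) \submodeli{\parr} \theta(\kcal)$ one immediately gets $\kcal_i \submodeli{\parr} \kcal$, and hence $\{\kcal_1,\dots,\kcal_n\} \submodeli{\parr} \kcal$. Now invoke $\parr$-extendibility of $\scrk$: there exists $\mfrak \in \scrk$ which is a $\parr$-variant of $\sum(\{\kcal_1,\dots,\kcal_n\},\kcal)$. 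Apply $\theta$ to land $\theta(\mfrak) \in \theta(\scrk)$.

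The remaining step is the verification that $\theta(\mfrak)$ is a $\parr$-variant of $\sum(\scrm',\nfrak)$. The two models share the underlying frame, namely the disjoint union of the frames of the $\kcal_i$ with a fresh root $w_0$. At every non-root world $w$, $w$ belongs to some $\kcal_i$ and the $\parr$-variant condition forces $\mfrak$ to coincide with $\kcal_i$ there on all atomics, so $\theta(\mfrak),w \Vdash a$ iff $\kcal_i,w \Vdash \theta(a)$ iff $\theta(\kcal_i),w \Vdash a$ iff $\sum(\scrm',\nfrak),w \Vdash a$. At the root $w_0$, for any parameter $p$ we use $\theta(p)=p$ together with the fact that $\mfrak$ is a $\parr$-variant of $\sum(\{\kcal_i\},\kcal)$ to get $\theta(\mfrak),w_0 \Vdash p$ iff $\mfrak,w_0 \Vdash p$ iff $\kcal,w_0 \Vdash p$ iff $\theta(\kcal),w_0 \Vdash p$ iff $\sum(\scrm',\nfrak),w_0 \Vdash p$. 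For non-parameter atomics at $w_0$ no agreement is required, by the definition of $\parr$-variant.

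There is no real obstacle; the proof is a bookkeeping exercise whose only delicate point is precisely the one highlighted above: that the convention ``$\theta$ is identity on $\parr$'' propagates cleanly through the three ingredients $\theta(\cdot)$, $\sum(\cdot,\cdot)$, and $\submodeli{\parr}$, so that $\theta$ and the $\sum$-construction commute up to a $\parr$-variant.
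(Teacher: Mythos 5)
Your proof is correct and is exactly the argument the paper has in mind (the paper's own proof is just ``Easy and left to the reader''): unfold the definition for $\theta(\scrk)$, use that $\theta$ preserves frames and parameter valuations to pull the configuration back to $\scrk$, apply extendibility there, and push the witness forward, checking the $\parr$-variant condition separately at the root and at non-root worlds. The only point worth stating slightly more carefully is that at a non-root world $w$ the equivalence $\mfrak,w\Vdash\theta(a)\Leftrightarrow\kcal_i,w\Vdash\theta(a)$ needs agreement of $\mfrak$ with $\kcal_i$ on the whole cone above $w$, not just at $w$ itself --- which holds because all worlds above $w$ are also non-root.
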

\begin{proof}
Easy and left to the reader.
\end{proof}
\noindent 
We say that a class $\scrk$ of Kripke models 
is stable, if for every $\kcal\in \scrk$ and every node 
$w$ in $\kcal$ we have $\kcal_w\in\scrk$. 
\begin{remark}\label{Remark-stable}
A class $\scrk$ of Kripke models
 is $\parr$-subextendible iff for 
every finite stable 
class of models $\scrk'$ which is $\parr$-submodel of 
some $\kcal\in\scrk$, a $\parr$-variant of 
$\sum(\scrk',\kcal)$ belongs to $\scrk$.
\end{remark}
\begin{proof}
Easy and left to the  reader.
\end{proof}
\noindent Define 
$\scrkn:=\{\kcal: \ex\kcal'\in\scrk\ 
(\kcal\leq_n\kcal')\text{ and } \kcal 
\text{ is  a Kripke model}\}$.
\begin{lemma}\label{Lem-stable-extendible}
If $\scrk$ is $\parr$-subextendible
stable class of   Kripke models, then so is 
$\scrkn$, for every $n> \#\parr$. \uparan{$\#\parr$ indicates the number of elements in $\parr$.}
\end{lemma}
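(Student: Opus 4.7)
The plan establishes two properties of $\scrkn$: stability and $\parr$-extendibility.  Stability follows easily: the transitivity of $\leq_n$ (a direct consequence of its semantic characterization in \cref{Cor-Kripke-bisim-property}) makes $\scrkn$ a $\leq_n$-downward closed class, so by \cref{Lem-Mod(A)-Kripke} we have $\scrkn=\Mod{A}$ for some $A$ with $\cto(A)\leq n$.  Intuitionistic persistence then closes $\Mod{A}$ under subtrees: if $\kcal\Vdash A$ then $A$ is forced at every node of $\kcal$, so $\kcal_w\Vdash A$ for each $w$ and hence $\kcal_w\in\scrkn$.

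For $\parr$-extendibility, by \cref{Remark-stable} it suffices to take a finite stable family $\scrf\subseteq\scrkn$ with $\scrf\submodeli\parr\kcal_0\in\scrkn$ and produce a $\parr$-variant of $\sum(\scrf,\kcal_0)$ in $\scrkn$.  The strategy is to lift the configuration $(\scrf,\kcal_0)$ into $\scrk$ through $\leq_n$, invoke $\parr$-extendibility of $\scrk$ on the lifted pair, and then pull the resulting model back via $\leq_n$.  Concretely: pick $\hat\kcal_0\in\scrk$ with $\kcal_0\leq_n\hat\kcal_0$; for each node $w$ of $\kcal_0$ where some $\kcal'\in\scrf$ is rooted along the $\submodeli\parr$-embedding, use the definition of $\leq_n$ to choose $\hat w$ in $\hat\kcal_0$ with $(\kcal_0)_w\sim_{n-1}(\hat\kcal_0)_{\hat w}$, and set $\hat\kcal':=(\hat\kcal_0)_{\hat w}$ (which lies in $\scrk$ by stability).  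The family $\hat\scrf$ of these $\hat\kcal'$ together with all their subtrees is finite, stable, a subclass of $\scrk$, and satisfies $\hat\scrf\submodeli\parr\hat\kcal_0$ by construction.  Applying $\parr$-extendibility of $\scrk$ yields a $\parr$-variant $\tkcal_0\in\scrk$ of $\sum(\hat\scrf,\hat\kcal_0)$, and we then choose the $\parr$-variant $\kcal$ of $\sum(\scrf,\kcal_0)$ whose non-$\parr$ atomic valuations at the root match those of $\tkcal_0$; it remains to check $\kcal\leq_n\tkcal_0$, which will give $\kcal\in\scrkn$.

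The main obstacle is verifying $\kcal\leq_n\tkcal_0$ at the subtree level.  Every non-root subtree of $\kcal$ is some $\kcal'\in\scrf$ or a subtree thereof, while the corresponding non-root subtree of $\tkcal_0$ is $\hat\kcal'=(\hat\kcal_0)_{\hat w}$ (or a subtree); since $\submodeli\parr$ only constrains $\parr$-valuations, $\kcal'$ and $\hat\kcal'$ may disagree on non-$\parr$ atomics, so $\kcal'\sim_{n-1}\hat\kcal'$ is not automatic.  The assumption $n>\#\parr$ intervenes here via \cref{Lem-nnilpn-c(n)}: every $\NNIL(\parr)$-formula is $\IPC$-equivalent to one of complexity at most $\#\parr<n$, so $\sim_{n-1}$ decides every such formula, and by \cref{Theorem-NNIL-Submodel2} the $\NNIL(\parr)$-theories of $\kcal'$ and $\hat\kcal'$ must agree through the ambient submodel structure.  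Together with the flexibility of $\parr$-variants and, if needed, an iterated refinement of $\hat\scrf$ using stability and $\parr$-extendibility of $\scrk$ at deeper levels, this closes the gap and supplies the required $\sim_{n-1}$-witnesses for $\kcal\leq_n\tkcal_0$.
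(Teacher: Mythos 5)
Your overall architecture (lift the configuration into $\scrk$, apply $\parr$-extendibility there, pull the result back through $\leq_n$) matches the paper's, and your stability argument via \cref{Lem-Mod(A)-Kripke} and persistence is fine. But there is a genuine gap in the lifting step, and you have located it yourself without repairing it. You define $\hat\kcal'$ as the subtree $(\hat\kcal_0)_{\hat w}$ with $(\kcal_0)_w\sim_{n-1}(\hat\kcal_0)_{\hat w}$, where $w$ is the node of $\kcal_0$ at which $\kcal'$ sits under the $\submodeli\parr$-embedding. This makes $\hat\kcal'$ a $\sim_{n-1}$-partner of $(\kcal_0)_w$, \emph{not} of $\kcal'$: the relation $\submodeli\parr$ only forces $\kcal'$ and $\kcal_0$ to agree on parameters (and $\kcal'$ need not contain all nodes above $w$), so on variable-formulas of implication depth up to $n-1$ the models $\kcal'$ and $\hat\kcal'$ can differ arbitrarily. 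Consequently the final verification $\kcal\leq_n\tkcal_0$ breaks down at every non-root subtree, and the proposed repairs cannot save it: agreement of $\NNIL(\parr)$-theories --- which is all that \cref{Theorem-NNIL-Submodel2} together with $n>\#\parr$ can give you --- is much weaker than $\sim_{n-1}$, which quantifies over \emph{all} formulas of depth below $n$ in \emph{all} atomics; and a $\parr$-variant only adjusts the root valuation, never the subtrees.

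The paper's proof makes the opposite choice of lift and deploys the $\NNIL(\parr)$ machinery in the other place. Each $\kcal'_i\in\scrf'$ is itself a member of $\scrkn$, so by stability of $\scrk$ it has its own $\sim_{n-1}$-partner $\kcal_i\in\scrk$, and likewise $\kcal'\sim_{n-1}\kcal\in\scrk$. The price of this choice is that the lifted family $\{\kcal_i\}_i$ is no longer automatically a $\parr$-submodel of $\kcal$ --- and this is exactly where $n>\#\parr$ enters: by \cref{Lem-nnilpn-c(n)} the formulas in $\tbnpr{\kcal'}$ have depth at most $\#\parr\leq n-1$, so $\sim_{n-1}$ transports $\kcal'_i\Vdash\tbnpr{\kcal'}=\tbnpr{\kcal}$ over to $\kcal_i$, and \cref{Theorem-NNIL-Submodel2} converts this back into $\kcal_i\submodel\parr\kcal$. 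With that in hand the $\parr$-extendibility of $\scrk$ applies, and the pull-back $\kcalh'\sim_{n-1}\kcalh$ goes through because each non-root subtree of $\kcalh'$ now has a genuine $\sim_{n-1}$-partner among the non-root subtrees of $\kcalh$. You should rebuild your $\hat\scrf$ along these lines rather than out of subtrees of $\hat\kcal_0$.
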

\begin{proof}
We only prove here the $\parr$-subextendibility
 of $\scrkn$ and leave other properties to the reader.\\
Let $\scrf'=\{\kcal'_i\}_i$ be a finite set of models 
in $\scrkn$, 
which are  $\parr$-submodels of some $\kcal'\in\scrkn$.
By \Cref{Remark-stable} we may also assume 
that $\scrf'$ is stable.
We must show that a $\parr$-variant of 
$\sum(\scrf',\kcal')$ belongs to $\scrkn$.
Since $\kcal'\in\scrkn$ and $\scrk$ is stable, 
there is some 
$\kcal\in\scrk$ such that $\kcal'\sim_{n-1}\kcal$. 
Similarly, since $\kcal'_i\in\scrkn$, there is some 
$\kcal_i\in\scrk$
such that $\kcal'_i\sim_{n-1}\kcal_{i}$. Let 
$\scrf:=\{\kcal_i\}_i$. 
\\
First we show that  $\scrf$ is a 
$\parr$-submodel of $\kcal$. 
Since $\scrf'$ is a $\parr$-submodel of $\kcal'$, 
by \Cref{Theorem-NNIL-Submodel2} 
we have $\kcal'_i\Vdash \tbnpr{\kcal'}$. 
From $\kcal'_i\sim_{n-1}\kcal_i$, 
\Cref{Lem-nnilpn-c(n),Corollary-n-sim-Kripke} we get 
$\kcal_i\Vdash \tbnpr{\kcal'}$. Also 
since $\kcal'\sim_{n-1}\kcal$, by 
\Cref{Lem-nnilpn-c(n),Corollary-n-sim-Kripke} we have 
$ \tbnpr{\kcal'}=\tbnpr{\kcal}$. Hence $\kcal_i\Vdash \tbnpr{\kcal}$, and by \Cref{Theorem-NNIL-Submodel2}
we have $\kcal_i$ is a $\parr$-submodel of 
$\kcal$. Hence $\scrf$ is a $\parr$-submodel of 
$\kcal$.  \\
We go back to the main proof. 
Since $\scrf$ is 
$\parr$-submodel of $\kcal$, by extendibility 
of $\scrk$, there exist  a $\parr$-variant $\kcalh$
of $\sum(\scrf,\kcal)$ in $\scrk$. 
Let $w_0$ is the root of $\kcal$ 
which is also the root of $\kcalh$ and  
$w'_0$ is the root of $\kcal'$. 
Define the $\parr$-variant $\kcalh'$ of 
$\sum(\scrf',\kcal')$ 
for atomic  $x\not\in \parr$ as follows:

\begin{equation*}
\kcalh',w'_0\Vdash x \quad \Longleftrightarrow \quad 
\kcalh,w_0\Vdash x.
\end{equation*}
It is sufficient to show that $\kcalh'\in\scrkn$. 
For this aim it is sufficient to show 
$\kcalh'\leq_n \kcalh$. From the definition of 
$\kcalh$ and $\kcalh'$, it is clear that 
it is sufficient to show that $\kcalh'\sim_{n-1}\kcalh$.
We use  induction on 
$k\leq n-1$  and show $\kcalh'\sim_{k}\kcalh$.
\\
If $k=0$, we must show that for every atomic $a$
we have 

\begin{equation*}
\kcalh',w'_0\Vdash a \quad \Longleftrightarrow \quad 
\kcalh,w_0\Vdash a.
\end{equation*}
For atomic variables  $x$, by definition of $\kcalh'$, we already have this. 
Also since $\kcalh$   is a $\parr$-variant of 
$\sum(\scrf,\kcal)$,
$\kcalh'$ is a $\parr$-variant of 
$\sum(\scrf',\kcal')$ and $\kcal\sim_{0}\kcal'$, 
for every $p\in\parr$ we also have 

\begin{equation*}
\kcalh',w'_0\Vdash p \quad \Longleftrightarrow \quad 
\kcalh,w_0\Vdash p.
\end{equation*}
Then let $0<k<n$ and show $\kcalh'\sim_k\kcalh$. 
We have the following items to prove:
\begin{itemize}[leftmargin=*]
\item For every  node $w'$ in $\kcalh'$, there is some 
$w$ in $\kcalh$ such that $\kcalh'_{w'}
\sim_{k-1}\kcalh_w$. If $w'$ is the root of $\kcalh'$, 
take $w$ also the root of $\kcalh$
and we have desired result by
induction hypothesis. If $w'$ is not the root of 
$\kcalh'$, since $\scrf'$ is stable, we may let 
$w'$ as a root $w'_i$ of some $\kcal'_i$. 
Take $w=w_i$.
Then by definition of $\kcal_i$, we have 

\begin{equation*}
\kcalh'_{w'}=\kcal'_i\sim_{n-1}\kcal_i=\kcalh_{w}
\end{equation*}
Since $k-1\leq n-1$, we have the desired result. 
\item For every  node $w$ in $\kcalh$, there is some 
$w'$ in $\kcalh'$ such that $\kcalh'_{w'}
\sim_{k-1}\kcalh_w$. Again if $w$ is the root, take $w'$
also the root and we are done by induction hypothesis.
If $w$ is not the root, there is some $i$ such that 
$w$ is a node of $\kcal_i$. Since 
$\kcal_i\sim_{n-1}\kcal'_i$, there is some $w'$ in 
$\kcal'_i$ such that 
$(\kcal'_i)_{w'}\sim_{n-2}(\kcal_i)_w$. Since 
$k-1\leq n-2$, we have 

\begin{equation*}
\kcalh'_{w'}=(\kcal'_i)_{w'}\sim_{k-1}(\kcal_i)_w
=\kcalh_w,
\end{equation*}
as desired.\qedhere
\end{itemize}
\end{proof}
\begin{theorem}\label{Theorem-IPC-nnilp-Finitary}
Every  $A\in\lcalz$ has  
$\altNNILpar$-projective resolution $\Pi$. 
Moreover for every $B\in\Pi$ we have $\cto(B)\leq
\max\{\cto(A),1+\#\parr\}$ and $\Pi$ is a computable 
function of $A$. 
\end{theorem}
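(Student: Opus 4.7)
The plan is to adapt Ghilardi's proof of \cref{Theorem-IPC-Finitary} to the relativised setting, using the Kripke-semantic characterization of $\NNILpar$-projectivity from \cref{Theorem-Ghil-Ext} in place of the earlier characterization of projectivity. Fix $n := \max\{\cto(A),\,1+\#\parr\}$. Since $\cto(A)\le n$, $\Mod{A}$ is already $\leq_n$-downward closed (\cref{Lem-Mod(A)-Kripke}), and by \cref{Remark-Finiteness-c(A),Lem-Characteristic-Kripke} there are only finitely many $\sim_n$-equivalence classes of Kripke models, hence only finitely many $\leq_n$-downward closed subclasses of $\Mod{A}$.

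Enumerate the $\leq_n$-downward closed $\parr$-extendible subclasses of $\Mod{A}$ and let $\scrf_1,\ldots,\scrf_k$ be the $\subseteq$-maximal ones. By \cref{Lem-Mod(A)-Kripke}, each $\scrf_i$ equals $\Mod{B_i}$ for some $B_i$ with $\cto(B_i)\le n$ (and such $\Mod{B_i}$ is automatically stable), while \cref{Theorem-Ghil-Ext} makes each $B_i$ $\NNILpar$-projective. The inclusion $\scrf_i\subseteq\Mod{A}$ yields $\vdash B_i\to A$, and $\subseteq$-maximality of the $\scrf_i$'s forces the $B_i$'s to be pairwise $\vdash$-incomparable, hence independent. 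Set $\Pi:=\{B_1,\ldots,B_k\}$. Computability follows because listing the $\sim_n$-classes via \cref{Lem-Characteristic-Kripke}, testing inclusion in $\Mod{A}$, and deciding $\parr$-extendibility of a finite candidate (via \cref{Theorem-Ghil-Ext} and \cref{Decid-nnilpar-proj}) are all effective.

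The heart of the argument is $A\arn\bigvee\Pi$. Let $\theta$ be a substitution and $C\in\NNILpar$ with $\vdash\theta(C\to A)$. Since $\theta$ fixes parameters and $C\in\lcalz(\parr)$, we have $\theta(C)=C$, so $\vdash C\to\theta(A)$; by \cref{Lem-sub-swap} this says $\scrk:=\theta(\Mod{C})=\{\theta(\kcal)\colon \kcal\Vdash C\}\subseteq\Mod{A}$. I would first show that $\Mod{C}$ is itself $\parr$-extendible: given $\scrk'\submodeli{\parr}\kcal$ with $\scrk'\cup\{\kcal\}\subseteq\Mod{C}$, the relation embedding each $\kcal'_i\in\scrk'$ into $\kcal$ and sending the fresh root $w_0$ to the root of $\kcal$ witnesses $\sum(\scrk',\kcal)\submodel{\parr}\kcal$, so by \cref{Theorem-NNIL-Submodel2} $\sum(\scrk',\kcal)\Vdash C$, and any $\parr$-variant preserves this since $C\in\lcalz(\parr)$. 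Applying \cref{Lem-extendible} and noting that $\Mod{C}$ is stable, the class $\scrk=\theta(\Mod{C})$ is stable and $\parr$-extendible. Since $n>\#\parr$, \cref{Lem-stable-extendible} yields that $\scrkn$ is also stable, $\parr$-extendible, and $\leq_n$-downward closed; and $\scrkn\subseteq\Mod{A}$ because $\Mod{A}$ is $\leq_n$-downward closed. By maximality of the $\scrf_i$'s, $\scrkn\subseteq\scrf_i$ for some $i$. Hence for every $\kcal\Vdash C$, $\theta(\kcal)\in\scrk\subseteq\scrf_i=\Mod{B_i}$, so $\kcal\Vdash\theta(B_i)$, which gives $\vdash C\to\theta(\bigvee\Pi)$ as required.

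The principal obstacle is this final transfer from the semantic class $\Mod{C}$ to a maximal chunk $\scrf_i$ through $\scrkn$: it rests on the technical claim that $\Mod{C}$ is $\parr$-extendible for $C\in\NNILpar$ (handled by the $\submodel{\parr}$-witness above) and on the $\sim_n$-bookkeeping in \cref{Lem-stable-extendible}, which is precisely why $n$ is forced to satisfy $n>\#\parr$. All remaining requirements ($\cto(B_i)\le n$, effectivity, $\vdash\bigvee\Pi\to A$, and independence) are then immediate from \cref{Lem-Mod(A)-Kripke}, decidability of $\parr$-extendibility, and the $\subseteq$-maximality of the $\scrf_i$'s.
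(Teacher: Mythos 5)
Your proof is correct and follows essentially the same route as the paper: both arguments hinge on forming $\langle\theta(\Mod{C})\rangle_n$ with $n=\max\{\cto(A),1+\#\parr\}$ and invoking \cref{Theorem-Ghil-Ext}, \cref{Lem-extendible}, \cref{Lem-stable-extendible} and \cref{Lem-Mod(A)-Kripke} to conclude that this class is the class of models of an $\NNILpar$-projective formula of implication-depth at most $n$. The only difference is presentational: you package $\Pi$ as the $\subseteq$-maximal $\leq_n$-downward-closed $\parr$-extendible subclasses of $\Mod{A}$, which gives independence for free and lets you prove only $\vdash C\to\theta(B_i)$, whereas the paper collects the formulas $\bat$ directly and establishes the stronger factorization $\vdash A'\lr\theta(\bat)$.
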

\begin{proof}
Given a substitution $\theta$
and $A'\in\NNILpar$ such that 
$ \vdash A'\to \theta(A)$,
we will find some $\bat\in\lcalz$,
with the following properties:
\begin{enumerate}
\item $\bat$ is $\altNNILpar$-projective.
\item $ \vdash A'\lr \theta(\bat)$.
\item $ \vdash \bat\to A$.
\item $\cto(\bat)\leq n$ for   $n:=\max\{\cto(A),1+\#\parr\}$ ($\#\parr$ indicates the number of atoms in $\parr$). 
\end{enumerate}
Then by items 1-3 (an independent subset of) the following set is a 
 $\altNNILpar$-projective resolution for $A$:

\begin{equation*}
\Pi:=\{
\bat: 
A'\in\NNILpar \text{ and $\theta$ a substitution such that }
 \vdash A'\to \theta(A)
\}.
\end{equation*}
Moreover \Cref{Remark-Finiteness-c(A)} and 
item (4) implies that $\Pi$ is finite, as desired.
So it remains to find  $\bat$
with mentioned properties.
Define 

\begin{equation*}
\scrk:=\theta(\Mod{A'}):=\{\theta(\kcal): \kcal\in\Mod{A'}\}.
\end{equation*}
Since $\scrkn$ has downward $\leq_n$-closure condition, 
we may apply  \Cref{Lem-Mod(A)-Kripke} 
and find some formula, e.g.~$\bat$,
such that $\cto(\bat)\leq n$ (so item 4 is satisfied)
and $\scrkn=\Mod{\bat}$. 
Since $A'\in\NNILpar$, evidently it is 
$\altNNILpar$-projective. Hence by \Cref{Theorem-Ghil-Ext}, 
$A'$ is $\parr$-subextendible. 
Hence by \Cref{Lem-extendible}, 
$\scrk$ is $\parr$-subextendible. 
Since $\scrk$ is stable
and $n> \#\parr$, \Cref{Lem-stable-extendible}
implies that $\scrkn$ is also $\parr$-subextendible.
Hence $\bat$ is $\parr$-subextendible and by 
\Cref{Theorem-Ghil-Ext}, $\bat$ is $\NNILpar$-projective. 
So item (1) is satisfied.
\\
To show item 3 for $\bat$, 
it is sufficient to show $\kcal\Vdash \bat\to A$ for every finite rooted model $\kcal$. If $\kcal\Vdash \bat$, we have $\kcal\in\scrkn$. Hence $\kcal\leq_n \kcal'$ for 
some $\kcal'\in\scrk$. Then $\kcal'=\theta(\kcal'')$
for some finite rooted $\kcal''$ such that 
$\kcal''\Vdash A'$. Since $\vdash A'\to \theta (A)$, 
we have $\kcal''\Vdash \theta(A)$, and by 
\Cref{Lem-sub-swap} we get $\theta(\kcal'')\Vdash A$,
whence $\kcal'\Vdash A$. Since 
$\cto(A)\leq n$ and $\kcal\leq_n\kcal'$, 
\Cref{Cor-Kripke-bisim-property} implies that
$\kcal\Vdash A$, as desired.
\\
It remains to show that item 2 holds.  
It is sufficient to show  $\kcal\Vdash 
A'\lr \theta(\bat)$ for arbitrary finite rooted $\kcal$.
If $\kcal\Vdash A'$, then $\theta(\kcal)\Vdash A'$ and 
hence $\theta(\kcal)\in\scrk\subseteq\scrkn=\Mod{\bat}$. 
Then  $\theta(\kcal)\Vdash \bat$ and hence $\kcal\Vdash
\theta(\bat)$. For the other direction, let 
$\kcal\Vdash \theta(\bat)$. Hence $\theta(\kcal)\Vdash \bat$ and then $\theta(\kcal)\in\Mod{\bat}=\scrkn$. 
So there is some $\kcal'\in\scrk$ such that 
$\theta(\kcal)\leq_n\kcal'$.  Since $\kcal'\in\scrk$,
there is some $\kcal''$ such that $\kcal'=\theta(\kcal'')$ and $\kcal''\Vdash A'$. Since 
$A'=\theta(A')$, we have $\kcal''\Vdash \theta(A')$
and hence $\kcal'\Vdash A'$. By 
\Cref{Lem-nnilpn-c(n)} $\cto(A')<n$ and the 
\Cref{Cor-Kripke-bisim-property} implies $\kcal\Vdash A'$.

Finally we provide an algorithm which computes 
$\Pi$. Given $A$, compute the finite set  

\begin{equation*}
\Pi':=\{B\in\lcalz: \cto(B)\leq\max\{\cto(A),1+\#\parr\} \text{ and } \vdash B\to A \text{ and } B\in\altdnnilpar\}.
\end{equation*}
Note that $\Pi'$ is computable since $\IPC$ is decidable and 
by \Cref{Decid-nnilpar-proj} we can decide 
$B\in\altdnnilpar$. Finally one may easily find $\Pi\subseteq\Pi'$ 
which includes pairwise $\IPC$-independent formulas, as required 
for projective resolutions.
\end{proof}

\subsection{Projective resolution for \NNIL}\label{NNIL-resol}
In this subsection, we will  see that 
 projective resolution of 
a $\NNIL$-formula gets a more elegant form. 
We will use this form later for  characterization of 
 $\altNNILpar$-admissible rules  of $\IPC$, specifically 
 for the validity of disjunction rule. 
By \Cref{Theorem-IPC-Finitary} or equivalently
\Cref{Theorem-IPC-nnilp-Finitary} with empty $\parr$,
there is a finite  projective resolution for every formula $A$, i.e.~a set $\{A_1,\ldots ,A_n\}$,  
with the following properties:
\begin{itemize}
\item Every unifier of $A$, is also a unifier of some 
$A_i$, in other words $A\ar \bigvee A_i$. 
\item $ \vdash \bigvee A_i\to A$.
\item $A_i$  is projective for every $i\leq n$.
\end{itemize}
We will prove here that if $A\in\NNIL$, the projective 
resolution can be chosen such that 
every $A_i$ is $\NNIL$ and moreover $ \vdash A\lr 
\bigvee A_i$. 
\\
Given $A\in\NNIL$ \sixtaghir{and a set $X$ of formulas}, we say that $A$ is an   
$X$-component if $A=\bigwedge \Gamma\wedge\bigwedge\Delta $
with the following properties:
\begin{itemize}
	\item Every $B\in \Gamma$ is atomic.
	\item Every $B\in\Delta$ is an implication
	$C\to D$ for some atomic   $C$ such that 
	$X\nvdash \bigwedge\Gamma\to C$.
\end{itemize}
\sixtaghir{For simplicity reasons, $\empty$-components are also called components.}

\begin{lemma}\label{ipc-component-dec} 
	Given  a set $X\subseteq\lcalz$,
	every $A\in\NNIL$  can be decomposed to $X$-components, 
	i.e.~there is a finite set of 
	$X$-components $\Gamma_A$ such that 
	$X\vdash A\lr \bigvee\Gamma_A$. Moreover,
	 if $A\in\NNILa$ then $\Gamma_A\subseteq\NNILa$.
\end{lemma}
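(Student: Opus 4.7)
The strategy is to first massage $A$ into a disjunction of conjunctions whose conjuncts are either atomics or implications with \emph{atomic} antecedent, and then iteratively enforce the side condition $\sft\nvdash\bigwedge\Gamma\to C$ by absorbing any ``trivial'' implication into the conjunction of atomics.

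\textbf{Step 1 (Normalisation).} Using the $\IPC$-equivalences
\[
(C_1\vee C_2)\to D \;\equiv\; (C_1\to D)\wedge (C_2\to D),
\qquad
(C_1\wedge C_2)\to D\;\equiv\; C_1\to(C_2\to D),
\]
together with distributivity of $\wedge$ over $\vee$, one shows by an induction on the construction of $\NNIL$ that every $A\in\NNIL$ is $\IPC$-equivalent to a disjunction
\(\bigvee_i\bigl(\bigwedge\Gamma_i\wedge\bigwedge\Delta_i\bigr),\)
where each $\Gamma_i$ is a set of atomics and each $\Delta_i$ consists of implications of the form $a\to E$ with $a$ atomic and $E\in\NNIL$. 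The key point is that whenever we start with $C\to D$ and $C\in\NI$, converting $C$ to DNF produces only implications whose antecedents are single atomics, while the consequent stays in $\NNIL$. I would carry out this step formally as a simultaneous induction on $\NNIL$ and $\NI$.

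\textbf{Step 2 (Saturating the side condition).} Call a conjunction $\bigwedge\Gamma\wedge\bigwedge\Delta$ (with $\Gamma,\Delta$ as produced in Step~1) a \emph{pre-component}. For a pre-component $B=\bigwedge\Gamma\wedge\bigwedge\Delta$, if there is some $(C\to D)\in\Delta$ with $\sft\vdash\bigwedge\Gamma\to C$, then $\sft\vdash B\lr \bigwedge\Gamma\wedge\bigwedge(\Delta\setminus\{C\to D\})\wedge D$. Now $D\in\NNIL$, so by Step~1 it is equivalent to a disjunction of pre-components, and distributing $\wedge$ over $\vee$ converts $B$ itself into a disjunction of pre-components. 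Crucially, each new pre-component has strictly fewer occurrences of the connective $\to$ than $B$, because replacing $C\to D$ by $D$ removes one arrow and Step~1 applied to $D\in\NNIL$ does not increase the arrow count. Thus an iteration terminates, and each surviving pre-component satisfies the required condition $\sft\nvdash\bigwedge\Gamma\to C$ for every implication $C\to D$ in its $\Delta$-part, i.e.\ is an $\sft$-component. Taking $\Gamma_A$ to be the finite set of these final $\sft$-components yields $\tvdash A\lr\bigvee\Gamma_A$.

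\textbf{Moreover clause.} Neither Step~1 nor Step~2 introduces any atomic that does not already occur in $A$: Step~1 uses only rearrangements, and Step~2 only drops implications and inserts subformulas of existing $\NNIL$-consequents. Hence if $A\in\NNIL(\abold)$, then every produced component lies in $\NNIL(\abold)$, so $\Gamma_A\subseteq\NNIL(\abold)$.

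The main subtlety I anticipate is the termination argument in Step~2: one has to be careful that re-applying Step~1 to the $\NNIL$ formula $D$ does not silently reintroduce arrows, which is why I would track the total number of $\to$-occurrences (or, more robustly, the implicational complexity of each atomic-antecedent implication viewed as a multiset) to get a well-founded measure that strictly drops at each rewrite.
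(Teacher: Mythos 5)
Your overall strategy (normalise to a disjunction of conjunctions of atomics and atomic-antecedent implications, then repeatedly absorb any implication whose antecedent is $\sft$-derivable from the atomic part) is sound in spirit, but the termination argument for Step~2 has a genuine gap. The claim that ``Step~1 applied to $D\in\NNIL$ does not increase the arrow count'' is false: the rewrites of Step~1 duplicate and add arrows. For instance, take $B=\bigwedge\Gamma\wedge\bigl(c\to((d\vee e)\to(f\to g))\bigr)$ with $\sft\vdash\bigwedge\Gamma\to c$; absorbing the outer implication yields $D=(d\vee e)\to(f\to g)$ with two occurrences of $\to$, whose Step~1 normal form $(d\to(f\to g))\wedge(e\to(f\to g))$ has four, so the resulting pre-component has four arrows while $B$ had three. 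The fallback you suggest, the multiset of implicational complexities of the individual implications, also fails as stated, because of currying: for $D=(c\wedge d\wedge e)\to f$ the absorbed implication $a\to D$ has two arrows, while its replacement $c\to(d\to(e\to f))$ is a single implication with three, so the multiset $\{2\}$ becomes $\{3\}$ and does not decrease. A repair is possible --- for example, insist in Step~1 on a hereditary normal form in which every consequent of an implication is itself already a disjunction of pre-components, so that unfolding $C\to D$ only releases implications literally occurring inside $D$ and the total arrow count genuinely drops --- but as written neither proposed measure decreases, so the iteration is not shown to terminate.

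For comparison, the paper sidesteps termination entirely by a different induction: it performs your Step~1 once, and when some atomic antecedent $E$ of an implication is $\sft$-derivable from the atomic conjuncts, it substitutes $\top$ for $E$ throughout that disjunct and applies the induction hypothesis to the result, which mentions strictly fewer atomics, relative to the strengthened logic $\sft+E$; the $(\sft+E)$-components so obtained become $\sft$-components after conjoining $E$. Since $\suba{A}$ strictly shrinks at each step, termination is immediate (at the mild cost of letting the background logic vary during the induction, which is why the lemma is stated for an arbitrary $\sft\supseteq\IPC$). If you want to keep your rewriting approach you will need a more carefully engineered measure, along the lines of the one the paper uses for \cref{star-prop}; otherwise switching to the induction on $\suba{A}$ is the cleaner route.
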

\begin{proof}
We use induction on 
$\suba A$ (the set of atomic 
formulas in $A$) ordered by $\supset$ and find some finite set $\Gamma_A$ of 
$X$-components with 
$\suba{\Gamma_A}\subseteq\suba A$ and $X\vdash \bigvee\Gamma_A\lr A$. 
As induction hypothesis assume that for every $\sft$ and 
$B\in\NNIL$ with 
$\suba B\subset \suba A$ there is a finite set 
$\Gamma_B$ of 
$X$-components such that $X\vdash B\lr \bigvee\Gamma$
and $\suba{\Gamma_B}\subseteq\suba B$.  
For the induction step, assume that $A\in\NNIL$ is given.
Using derivation in $\IPC$ one may easily find finite sets 
$\Gamma_i$ and $\Delta_i$ for $1\leq i\leq n$ such that 
\begin{itemize}
\item $\IPC\vdash A\lr \bigvee_{i=1}^n A_i$,
in which $A_i:=\bigwedge\Gamma_i\wedge\bigwedge\Delta_i$.
\item $\Delta_i$ includes only atomic   formulas.
\item $\Gamma_i$ includes implications  
with atomic  antecedents.
\item $\suba{\Gamma_i\cup\Delta_i}\subseteq \suba A$.
\end{itemize}
It is sufficient to decompose every $A_i$ to $X$-components. 
If $X\nvdash \bigwedge\Delta_i\to E$ for every antecedent 
$E$ of an implication in $\Gamma_i$, then $A_i$ already is a 
$X$-component and we are done. Otherwise, there is some 
$E\to F\in\Gamma_i$ such that $X\vdash \bigwedge\Delta_i\to E$.
Then let  $A_i':=A_i[E:\top]$, i.e.~the replacement of every 
occurrences of $E$ in $A_i$ with $\top$. Also let $X':=X\cup\{E\}$.
Hence $\suba{A'_i}\subsetneqq \suba{A}$ and by induction hypothesis 
we may decompose $A'_i$ to $X'$-components:

\begin{equation*}
X'\vdash A'_i\lr \bigvee_j B_j
\end{equation*}
It is not difficult to observe that if $B_j$ is a $X'$-component then 
$B'_j:=E\wedge B_j$ is a $X$-component. Moreover 
$X\vdash E\wedge A'_i\lr \bigvee_j B'_j$ and since 
$\IPC\vdash (E\wedge A'_i)\lr (E\wedge A_i) $ and $X\vdash A_i\to E$, 
we get 

\begin{equation*}
X\vdash A_i\lr \bigvee_j B'_j.
\end{equation*}
Hence we have decomposed $A_i$ to $X$-components $B'_j$ with 
$\suba{B'_j}\subseteq\suba{A}$, as desired.
\end{proof}
\begin{lemma}\label{Lemma-NNIL-normal-projective}
Components are extendible.
\end{lemma}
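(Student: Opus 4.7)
The plan is to unpack extendibility directly at the level of Kripke semantics. Since an $\IPC$-component lies in $\NNIL$ and here $\parr=\emptyset$, it suffices to show: for every Kripke model $\kcal\Vdash A$ and every $\kcal'\submodeli{\emptyset}\kcal$ with $\kcal'\Vdash^- A$, there is an $\emptyset$-variant $\kcal''$ of $\kcal'$ with $\kcal''\Vdash A$. Write $A=\bigwedge\Gamma\wedge\bigwedge\Delta$ as in the definition of an $\IPC$-component. I will construct $\kcal''$ by keeping the frame of $\kcal'$ and all non-root valuations unchanged, and setting, at the root $w_0$,
\[
\kcal'',w_0\Vdash a \quad \text{iff}\quad a\in\Gamma \qquad (a\in\atom).
\]

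The verification splits into two parts. For monotonicity of the valuation of $\kcal''$: the atomics outside $\Gamma$ are false at $w_0$, creating no obligation; for $a\in\Gamma$, any successor $w\succ w_0$ of $w_0$ is non-root, so $\kcal',w\Vdash A$ by $\kcal'\Vdash^- A$, which in particular forces $\kcal',w\Vdash a$, hence $\kcal'',w\Vdash a$. For $\kcal''\Vdash A$ at $w_0$: every $a\in\Gamma$ holds at $w_0$ by construction. For an implication $C\to D\in\Delta$, the definition of an $\IPC$-component gives that $C$ is atomic with $\IPC\nvdash\bigwedge\Gamma\to C$; in particular $C\notin\Gamma$, so $\kcal'',w_0\nVdash C$ and the implication holds vacuously at the root. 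At every strict successor $w$ of $w_0$, the non-root valuation is unchanged, so $\kcal'',w\Vdash A$ already by $\kcal'\Vdash^- A$, giving $\kcal'',w\Vdash C\to D$.

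The only non-routine point is the one driving the argument: the side condition $\IPC\nvdash\bigwedge\Gamma\to C$ is precisely what forbids the atomic antecedent $C$ from sitting in $\Gamma$, and this is exactly what lets us falsify $C$ at the root without breaking the truth of $\bigwedge\Gamma$ or monotonicity. Beyond this observation the proof is routine bookkeeping, so I expect no real obstacle.
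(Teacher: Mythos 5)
Your proof is correct and rests on the same key idea as the paper's: redefine the root valuation so that exactly the atomic conjuncts $\Gamma$ hold there, so that the side condition $\nvdash\bigwedge\Gamma\to C$ forces every implication antecedent in $\Delta$ to fail at the root while everything above the root is untouched. The only (immaterial) difference is that you unpack the single-model definition of extendibility, whereas the paper verifies the equivalent class-of-models formulation via a variant of $\sum(\scrk)$; your verification of persistence and of the implications is in fact more detailed than the paper's one-line ``it is easy to observe''.
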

\begin{proof}
Let $B=\bigwedge B_i$ is a component and 
$\scrk$ be a finite set of finite rooted Kripke models
for $B$. We must show that a variant of 
$\sum(\scrk)$ is a model of $B$.
Let $w_0$ be the root of $\sum(\scrk)$ and 
define a variant $\kcal$ of $\sum(\scrk)$ as follows. 
$\kcal,w_0\Vdash a$ iff $a=B_i$ for some $i$. 
Then it is easy to observe that 
$\kcal,w_0\Vdash B$.
\end{proof}
\begin{corollary}\label{Corollary-proj-res-NNIL}
For $A\in\NNIL$ there is a finite set $\Delta$
of projective
and $\NNIL$ formulas with 

\begin{equation*} 
\vdash A\lr \bigvee \Delta.
\end{equation*}
\end{corollary}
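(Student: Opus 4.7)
The plan is to combine the three ingredients already assembled just above the statement: the decomposition of $\NNIL$-formulas into $\IPC$-components (\cref{ipc-component-dec}), the extendibility of such components (\cref{Lemma-NNIL-normal-projective}), and the extendibility-implies-projectivity direction of the Ghilardi-style characterization (\cref{Theorem-Ghil-Ext}) specialized to the parameterless case $\parr=\emptyset$.

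Concretely, given $A\in\NNIL$, I would first apply \cref{ipc-component-dec} with $\sft:=\IPC$ to produce a finite set $\Gamma_A$ of $\IPC$-components with
\[
\vdash A\lr \bigvee \Gamma_A,
\]
and, since $A\in\NNIL$ over the full set of atomics, with $\Gamma_A\subseteq \NNIL$. This takes care of the $\NNIL$ requirement and of the $\IPC$-equivalence to a finite disjunction.

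Next, for each $B\in\Gamma_A$, \cref{Lemma-NNIL-normal-projective} gives that $B$ is extendible (in the $\emptyset$-extendibility sense). Now set $\parr:=\emptyset$, so that $\NNILpar=\{\top,\bot\}$ and $\parr$-extendibility reduces to plain extendibility; by the equivalence $(3)\Rightarrow (2)$ of \cref{Theorem-Ghil-Ext} each such $B$ is $\{\top,\bot\}$-projective, i.e.\ there is a $B$-projective substitution $\theta_B$ with $\vdash\theta_B(B)\lr B^\dagger$ for some $B^\dagger\in\{\top,\bot\}$. Since components are $\IPC$-consistent (unless one is outright $\bot$, in which case we may discard it from the disjunction without affecting $\bigvee\Gamma_A$), we necessarily have $B^\dagger=\top$, which is exactly projectivity in the usual sense.

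Taking $\Delta:=\Gamma_A$ (after discarding any trivially false component), we obtain a finite set of $\NNIL$-propositions, each of which is projective, and with $\vdash A\lr \bigvee\Delta$, as required. I do not foresee a real obstacle here: the only small point that needs care is the specialization of \cref{Theorem-Ghil-Ext} to $\parr=\emptyset$ and the observation that $\{\top,\bot\}$-projectivity collapses to ordinary projectivity once the trivially inconsistent component is removed from $\Gamma_A$.
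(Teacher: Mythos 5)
Your proof is correct and takes essentially the same route as the paper, whose entire proof is to cite \cref{Lemma-NNIL-normal-projective}, \cref{ipc-component-dec} and \cref{Theorem-Ghil}; your only deviation is to invoke \cref{Theorem-Ghil-Ext} at $\parr=\emptyset$ instead of \cref{Theorem-Ghil} directly, which the paper itself identifies as exactly that special case. Your care about discarding a ``$\bot$ component'' is harmless but unnecessary, since $\bot$ is not an atomic and every $\IPC$-component is satisfied in the one-node model forcing precisely its atomic conjuncts, so the $\{\top,\bot\}$-projection is always $\top$.
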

\begin{proof}
Use \Cref{Lemma-NNIL-normal-projective,ipc-component-dec,Theorem-Ghil}.
\end{proof}

\begin{lemma}\label{Remark-extendible-disjun}
Every extendible $A$ is prime, i.e.~if 
$ \vdash A\to (B\vee C)$, then 
either $ \vdash A\to B$ or $ \vdash A\to C$. 
\end{lemma}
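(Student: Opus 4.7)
The plan is to reduce the statement to Ghilardi's characterization of projective propositions (\cref{Theorem-Ghil}), which identifies extendibility (i.e.~$\emptyset$-extendibility) with $\{\top\}$-projectivity. Assuming $A$ is extendible, I therefore obtain an $A$-projective substitution $\theta$ with $\vdash \theta(A)$. By the standing convention $\theta$ is identity on parameters, and by definition $A \vdash a \lr \theta(a)$ for every atomic $a$; a routine structural induction then lifts this to $A \vdash X \lr \theta(X)$ for every proposition $X$, and in particular for $B$ and $C$.

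From the hypothesis $\vdash A \to (B \vee C)$, I apply $\theta$ to obtain $\vdash \theta(A) \to (\theta(B) \vee \theta(C))$, and modus ponens with $\vdash \theta(A)$ yields $\vdash \theta(B) \vee \theta(C)$. The disjunction property of $\IPC$ now gives $\vdash \theta(B)$ or $\vdash \theta(C)$. In the first case $A \vdash \theta(B)$, so the projective equivalence $A \vdash \theta(B) \lr B$ delivers $A \vdash B$ and hence $\vdash A \to B$; the second case is symmetric and gives $\vdash A \to C$.

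The only substantive ingredient is \cref{Theorem-Ghil}, which is already established. Everything else—substitution invariance of $\vdash$, the structural extension of projectivity from atomics to arbitrary formulas, and the disjunction property of $\IPC$—is routine, so I do not anticipate any obstacle.
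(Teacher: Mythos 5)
There is a genuine gap: your reduction to \cref{Theorem-Ghil} does not cover the cases for which the lemma is actually needed. \cref{Theorem-Ghil} identifies extendibility with projectivity only in the parameter-free setting of \citep{Ghil99}; since substitutions are by convention the identity on $\parr$, an extendible formula containing parameters need not be projective at all. For instance a single parameter $p$, or a conjunction $\bigwedge X$ with $X\subseteq\parr$, is extendible (one can always make the parameters true at the fresh root) but has no unifier, so your proof never gets off the ground --- there is no $\theta$ with $\vdash\theta(A)$. These are precisely the instances the paper relies on: the lemma is invoked for $\bigwedge X$, $X\subseteq\parr$, in the completeness proof of \cref{Theorem-ARN-Completeness}, and for $\IPC$-components (which may contain parametric conjuncts) in \cref{component-extendible-prime}. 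A smaller edge case of the same kind: $\bot$ is vacuously extendible but not unifiable, so even with $\parr=\emptyset$ your argument needs a separate (trivial) case. Within the genuinely projective fragment your chain of steps (apply $\theta$, modus ponens, disjunction property, then transfer back along $A\vdash X\lr\theta(X)$) is correct, but it proves a strictly weaker statement than the lemma.

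The paper's own proof avoids unification entirely and is purely semantic: arguing by contraposition from $\nvdash A\to B$ and $\nvdash A\to C$, it takes countermodels $\kcal_1\Vdash A$, $\kcal_1\nVdash B$ and $\kcal_2\Vdash A$, $\kcal_2\nVdash C$, and uses extendibility directly to find a variant of $\sum(\{\kcal_1,\kcal_2\})$ that forces $A$ at the new root while refuting both $B$ and $C$ above it, hence refuting $A\to(B\vee C)$. If you want to salvage your approach you would have to replace \cref{Theorem-Ghil} by its relativised form \cref{Theorem-Ghil-Ext} and combine the resulting $\NNILpar$-projection with the primality of the relevant $\NNILpar$-formulas, but that is circular here, since the primality of $\IPC$-components is exactly what this lemma is being used to establish. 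The direct gluing argument is the right tool.
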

\begin{proof}
We prove this by contraposition. Let $ \nvdash A\to B$
and $ \nvdash A\to C$. Then there are some Kripke 
models $\kcal_1 $ and $\kcal_2$ such that 
$\kcal_1\Vdash A$, $\kcal_1\nVdash B$, 
$\kcal_2\Vdash A$ and $\kcal_2\nVdash B$. Since 
$A$ is extendible, there is some variant 
$\kcal'$ of $\sum(\{\kcal_1,\kcal_2\})$ 
such that $\kcal\Vdash A$. 
Since 
$\kcal_1\nVdash B$ we have 	 $\kcal\nVdash B$. Similarly 
$\kcal\nVdash C$. Hence $\kcal\nVdash B\vee C$ 
and then $\kcal\nVdash A\to (B\vee C)$. 
\end{proof}

\begin{theorem}\label{component-extendible-prime}
Given \sixtaghir{$A\in\NNIL$}, the following are equivalent:
\begin{enumerate}
\item $A$ is a component, \uparan{modulo $\IPC$-provable equivalence}
\item $A$ is extendible,
\item $A$ is prime.
\end{enumerate}
\end{theorem}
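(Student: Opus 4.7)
The plan is to complete the cycle $(1) \Rightarrow (2) \Rightarrow (3) \Rightarrow (1)$, so that all three conditions become equivalent. Two of the three arrows are already supplied by earlier results in this subsection: $(1) \Rightarrow (2)$ is exactly \autoref{Lemma-NNIL-normal-projective}, which shows that every $\IPC$-component is extendible; and $(2) \Rightarrow (3)$ is \autoref{Remark-extendible-disjun}, which says that every extendible proposition is $\IPC$-prime. So only the direction $(3) \Rightarrow (1)$ requires genuine work.

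For $(3) \Rightarrow (1)$, my plan is to combine the $\IPC$-component decomposition with the primality hypothesis. Applying \autoref{ipc-component-dec} with $\sft = \IPC$, decompose $A$ as
\[
\IPC \vdash A \lr \bigvee_{i=1}^{n} C_i,
\]
where each $C_i$ is an $\IPC$-component. In particular $\IPC \vdash A \to \bigvee_{i=1}^{n} C_i$. Since $A$ is $\IPC$-prime, an inductive (iterated) application of primality along this finite disjunction, splitting off one disjunct at a time, yields $\IPC \vdash A \to C_j$ for some index $j$. Combined with the trivial $\IPC \vdash C_j \to \bigvee_{i} C_i$, and hence $\IPC \vdash C_j \to A$, we obtain $\IPC \vdash A \lr C_j$, exhibiting $A$ as $\IPC$-equivalent to the $\IPC$-component $C_j$.

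The main point to observe is that \autoref{ipc-component-dec} is available precisely in the $\NNIL$-setting that frames this subsection, which is where the equivalences are most naturally read. Once the component decomposition is in hand, the role of primality is exactly to collapse the finite disjunction to a single component, closing the cycle; the iterated use of primality over the finite disjunction is routine and requires no further idea.
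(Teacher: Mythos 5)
Your proposal matches the paper's proof: both cycles $(1)\Rightarrow(2)$ via \cref{Lemma-NNIL-normal-projective}, $(2)\Rightarrow(3)$ via \cref{Remark-extendible-disjun}, and $(3)\Rightarrow(1)$ by decomposing $A$ into $\IPC$-components with \cref{ipc-component-dec} and using primality to select a single disjunct $C_j$ with $\vdash A\lr C_j$. The only difference is cosmetic — you spell out the iterated use of binary primality over the finite disjunction, which the paper leaves implicit — so the argument is essentially the same.
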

\begin{proof}
$1\Rightarrow 2$: \Cref{Lemma-NNIL-normal-projective}.
$2\Rightarrow 3$: \Cref{Remark-extendible-disjun}.
$3\Rightarrow 1$:   Let $A$ is   prime. 
By \Cref{ipc-component-dec} it can be decomposed to 
components $\Gamma_A$.  
Thus $\vdash A\lr \bigvee\Gamma_A$ and by  primality of $A$
we have $\vdash A\to B$ for some $B\in\Gamma_A$. Then $\vdash A\lr B$
and hence $A$ is $\ipc$-equivalent to some component.
\end{proof}

Remember that $\pNNILpar$ indicates the set of prime 
and $\NNILpar$-formulas.
\begin{corollary}\label{Lem-nnil-normal-form}
Up to $\IPC$-provable equivalence, we have 
$\NNIL=\pNNIL^\vee$ and $\NNILpar=\pNNILpar^\vee$.
\end{corollary}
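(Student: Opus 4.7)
The plan is to derive both equalities directly by combining the decomposition lemma for $\NNIL$-propositions with the characterization of $\IPC$-components as $\IPC$-prime propositions.

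First I would handle the inclusion $\NNIL \subseteq \pNNIL^\vee$ (modulo $\IPC$-equivalence). Given $A \in \NNIL$, apply \cref{ipc-component-dec} with $\sft = \IPC$ to obtain a finite set $\Gamma_A$ of $\IPC$-components such that $\vdash A \lr \bigvee \Gamma_A$. Since $A \in \NNIL = \NNIL(\atom)$, the moreover-clause of \cref{ipc-component-dec} guarantees $\Gamma_A \subseteq \NNIL$. By \cref{component-extendible-prime}, every $B \in \Gamma_A$ is $\IPC$-prime, hence $B \in \pNNIL$. Therefore $A$ is $\IPC$-equivalent to $\bigvee \Gamma_A \in \pNNIL^\vee$.

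For the reverse inclusion $\pNNIL^\vee \subseteq \NNIL$, suppose $A = \bigvee \Delta$ with $\Delta \fsubeq \pNNIL$. Since $\pNNIL \subseteq \NNIL$ and the inductive definition of $\NNIL$ in \cref{sec-NNIL-def} closes $\NNIL$ under $\vee$, we immediately conclude $A \in \NNIL$.

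The parametric case $\NNILpar = \pNNILpar^\vee$ follows by the same argument, replacing $\atom$ by $\parr$ throughout: the moreover-clause of \cref{ipc-component-dec} applied with $\abold = \parr$ keeps the decomposition inside $\NNILpar$, and the characterization in \cref{component-extendible-prime} delivers $\IPC$-primality, so each component lies in $\pNNILpar$. The converse inclusion again uses closure of $\NNILpar$ under disjunction. I do not anticipate any obstacle here: the corollary is essentially a direct repackaging of \cref{ipc-component-dec,component-extendible-prime}, and the only subtle point to mention is that the decomposition respects the atomic parameter constraint, which is already built into the statement of \cref{ipc-component-dec}.
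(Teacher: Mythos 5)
Your proposal is correct and follows essentially the same route as the paper: decompose via \cref{ipc-component-dec}, use its moreover-clause to keep the components inside $\NNILpar$, and invoke \cref{component-extendible-prime} for $\IPC$-primality. The only difference is that you also spell out the (trivial) reverse inclusion $\pNNIL^\vee\subseteq\NNIL$, which the paper leaves implicit.
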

\begin{proof}
By \Cref{ipc-component-dec} every $A\in\NNIL$ can be decomposed to 
components $ \Gamma_A$ such that 
$A\in\NNILpar$ implies 
$\Gamma_A\subseteq\NNILpar$. Then \Cref{component-extendible-prime}
implies that every $E\in\Gamma_A$ is  prime. Hence 
$\bigvee\Gamma_A\in\pNNIL^\vee$ and moreover 
$A\in\NNILpar$ implies $\bigvee\Gamma_A\in\pNNILpar^\vee$.
\end{proof}

\begin{corollary}\label{pnnilpv-nnilp}
${\altarpn}={\altarn}\  $.
\end{corollary}
\begin{proof}
\Cref{Lem-nnil-normal-form,vee-pres}.
\end{proof}
A consequence of the results in this subsection is that now 
we have uniqueness of the projective resolutions:
\begin{theorem}[\bf Projective Resolution]\label{Projec-resol}
	Every $A\in\lcalz$ has a $ \altpNNILpar $-projective resolution. 
	Moreover this resolution is computable and 
	unique up to $ \IPC $-provable equivalency, 
	i.e.~for every two $ \altpNNILpar $-projective resolutions 
	$ \Delta=\{B_1,\ldots,B_m\}$ and $\Delta'=\{C_1,\ldots,C_n\}  $ 
	for $ A $, we have  
	$ m=n$ and there is some permutation $\sigma$ such that
	for every $ i $,  $ \vdash B_i\lr C_{\sigma(i)} $. 
\end{theorem}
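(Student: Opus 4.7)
\medskip

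\noindent\textbf{Proof proposal.} The plan is to deduce the theorem by upgrading the $\NNILpar$-projective resolution produced in \cref{Theorem-IPC-nnilp-Finitary} to one whose projections are $\IPC$-prime, and then to exploit $\IPC$-primality to pin down the pieces uniquely.

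\emph{Existence.} First I would apply \cref{Theorem-IPC-nnilp-Finitary} to obtain a $\NNILpar$-projective resolution $\Pi=\{B_1,\dots,B_m\}$ of $A$ together with, for each $i$, an $B_i$-projective substitution $\theta_i$ and a projection $B_i^\dagger\in\NNILpar$ with $\vdash\theta_i(B_i)\lr B_i^\dagger$. By \cref{Lem-nnil-normal-form} each $B_i^\dagger$ is $\IPC$-provably equivalent to a finite disjunction $\bigvee_{j} C_{i,j}$ with $C_{i,j}\in\pNNILpar$. Set $B_{i,j}:=B_i\wedge C_{i,j}$. Using $\vdash B_i\to B_i^\dagger$ from \cref{Gamma-projectivity-pres}, I expect $\vdash B_i\lr\bigvee_{j}B_{i,j}$, so $\vdash A\lr\bigvee_{i,j}B_{i,j}$. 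Moreover $\theta_i$ remains $B_{i,j}$-projective (since $B_{i,j}\vdash B_i$), and because $\theta_i$ fixes parameters and $C_{i,j}\in\lcalzpar$, $\theta_i(B_{i,j})\equiv B_i^\dagger\wedge C_{i,j}\equiv C_{i,j}\in\pNNILpar$. Thus each $B_{i,j}$ is $\pNNILpar$-projective. To meet the independence condition, discard any $B_{i,j}$ $\IPC$-derivable from another member; the remaining set is the required $\pNNILpar$-projective resolution, and it inherits the admissibility condition $A\arg\bigvee_{i,j}B_{i,j}$ from $\Pi$. Computability follows from the computability of $\Pi$ (\cref{Theorem-IPC-nnilp-Finitary}) together with the decidability of $\IPC$ and finiteness of $\NNILpar$ modulo provable equivalence (\cref{Remark-NNIL-finiteness}).

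\emph{Uniqueness.} Suppose $\Delta=\{B_1,\dots,B_m\}$ and $\Delta'=\{C_1,\dots,C_n\}$ are two $\pNNILpar$-projective resolutions of $A$, with projections $B_i^\dagger,C_j^\dagger\in\pNNILpar$ under substitutions $\theta_i,\tau_j$ respectively. From $\vdash\bigvee\Delta\lr A\lr\bigvee\Delta'$ I get $\vdash B_i\to\bigvee_{j}C_j$ for each $i$. Applying \cref{proj-pres-admiss} (with $\Gamma=\pNNILpar\subseteq\lcalzpar$) and noting that $\theta_i$ fixes every $C_j\in\lcalzpar$, this is equivalent to $\vdash B_i^\dagger\to\bigvee_{j}C_j$. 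Since $B_i^\dagger\in\pNNILpar$ is $\IPC$-prime, there exists $j=\sigma(i)$ with $\vdash B_i^\dagger\to C_j$, hence $\vdash B_i\to C_j$. Running the symmetric argument from $\vdash C_j\to\bigvee_{k}B_k$ and $\IPC$-primality of $C_j^\dagger$ yields some $k$ with $\vdash C_j\to B_k$, and the independence of $\Delta$ forces $B_k=B_i$, giving $\vdash B_i\lr C_{\sigma(i)}$. The same reasoning in reverse produces $\tau:\{1,\dots,n\}\to\{1,\dots,m\}$ with $\vdash C_j\lr B_{\tau(j)}$; independence of $\Delta$ and $\Delta'$ forces $\tau\sigma=\mathrm{id}$ and $\sigma\tau=\mathrm{id}$, so $\sigma$ is a bijection and $m=n$.

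\emph{Where I expect friction.} The conceptual content is modest because the heavy lifting is already done in \cref{Theorem-IPC-nnilp-Finitary} and \cref{Lem-nnil-normal-form}; the main point to check carefully is that the refinement $B_{i,j}:=B_i\wedge C_{i,j}$ is indeed $\pNNILpar$-projective via the \emph{same} substitution $\theta_i$—this is where invariance of parameters under substitutions is crucial and where a sloppy argument could accidentally replace $\theta_i$ by something that is no longer $B_{i,j}$-projective. In the uniqueness half, the one delicate invocation is \cref{proj-pres-admiss}, which converts the implication $\vdash B_i\to\bigvee_j C_j$ into one about $B_i^\dagger$; without this step one cannot apply $\IPC$-primality, since $B_i$ itself is only $\parr$-extendible and need not be $\IPC$-prime.
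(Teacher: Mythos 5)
Your existence argument is essentially the paper's: refine the $\NNILpar$-projective resolution supplied by \cref{Theorem-IPC-nnilp-Finitary} by conjoining each $B_i$ with the $\IPC$-prime disjuncts of its projection $B_i^\dagger$ (the paper's $\Pi_0:=\{E\wedge E': E\in\Delta,\ E'\in\Gamma_{E^\dagger}\}$), and your check that $\theta_i$ still projects $B_i\wedge C_{i,j}$ onto $C_{i,j}$ is exactly the ``following fact'' the paper invokes. One incidental slip there: you assert $\vdash A\lr\bigvee_{i,j}B_{i,j}$, but \cref{def-proj-res} only guarantees $\vdash\bigvee\Pi\to A$ together with the admissibility statement $A\arg\bigvee\Pi$; the converse implication is not available (and is not needed for existence).

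That same slip becomes a genuine gap in the uniqueness half. You start from ``$\vdash\bigvee\Delta\lr A\lr\bigvee\Delta'$'' and deduce $\vdash B_i\to\bigvee_j C_j$. The definition of a projective resolution does not give $\vdash A\to\bigvee\Delta'$, and it is false in general: for Harrop's formula $A=\neg r\to(p\vee q)$ (with $\parr=\emptyset$) the members of a projective resolution are projective strengthenings such as $\neg\neg r$, $\neg r\to p$, $\neg r\to q$, and $A$ does not derive their disjunction in $\IPC$ --- this is precisely why Harrop's rule is admissible but not derivable. The correct route, which is the paper's, goes through the admissibility clause: from $\vdash B_i\to A$ and $A\arpn\bigvee\Delta'$ you get $B_i\arpn\bigvee\Delta'$; instantiating the definition of $\arpn$ with the substitution $\theta_i$ and $C:=B_i^\dagger\in\pNNILpar$ (using $\vdash\theta_i(B_i^\dagger\to B_i)$) yields $\vdash B_i^\dagger\to\bigvee_j\theta_i(C_j)$; primality of $B_i^\dagger$ then picks out a $j$, and \cref{proj-pres-admiss} converts $\vdash\theta_i(B_i^\dagger\to C_j)$ back into $\vdash B_i\to C_j$. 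Note also that your parenthetical ``$\theta_i$ fixes every $C_j\in\lcalzpar$'' is incorrect: the $C_j$ are $\pNNILpar$-projective propositions in the full language, not elements of $\lcalzpar$; only their projections $C_j^\dagger$ lie in $\lcalzpar$, so the $\theta_i(C_j)$ must be carried along as above. The remaining bookkeeping (independence forcing $\sigma$ to be a bijection, hence $m=n$) is fine.
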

\begin{proof} 
Given $A$, by \Cref{Theorem-IPC-nnilp-Finitary}	there is a 
$\altNNILpar$-projective resolution $\Delta$ for $A$. Then define 

\begin{equation*}
\Pi_0:=\{E\wedge E': E\in \Delta\text{ and }  E'\in \Gamma_{E^\dagger}\}
\end{equation*}
in which $E^\dagger\in\NNILpar$ is the $\altNNILpar$-projection of $E$
and $\Gamma_{E^\dagger}$ is the decomposition of $E^\dagger$ to 
components, as provided by \Cref{ipc-component-dec}.  Finally 
let $\Pi\subseteq\Pi_0$ be some $\subseteq$-minimal set  with 
$\vdash \bigvee \Pi_0\lr\bigvee\Pi$.
Then by 
	\Cref{pnnilpv-nnilp} and the following fact 
	one may easily observe that $\Pi$ is a 
	$\altpNNILpar$-projective resolution for $A$: 
	if \sixtaghir{$ E\xratth F\in\altNNILpar $ and $ E'\in\pNNILpar $, 
	then $ (E\wedge E')\xratth (F\wedge E')$}. 
	\\
	For the uniqueness, it is sufficient to show that for every
	$ \altpNNILpar $-projective $ E $, if 
	$ E \altarpn \bigvee_i F_i$ 
	then  for some $ i $ we have 
	$ \vdash E\to F_i $. Let $ E\xratth E^\dagger $.
	Then by $ E \altarpn \bigvee_i F_i$ we have 
	$ \vdash \theta(E^\dagger\to \bigvee_iF_i)  $.
	Hence $ \vdash E^\dagger\to\bigvee_i\theta(F_i) $ and
	since $E^\dagger $ is prime, we have 
	$ \vdash E^\dagger\to \theta(F_i) $ for some $ i $. 
	Thus \Cref{proj-pres-admiss} implies $ \vdash E \to F_i$, as desired.
\end{proof}

\section{$\NNILpar$-admissible rules of $\IPC$}
\label{sec-admis}
In  \citep{IemhoffT}, 
the admissibility relation $\ar $  is characterized by means of preservation relation $\rhd$ and its Kripke semantics, called 
${\sf AR}$-models. In this section we will characterize
and prove the decidability of 
{$\arn$}\ , the \textit{$\altNNILpar$-admissible rules of $\IPC$}
(see \Cref{pres-admis}).
 For this end, we imitate the route in 
\citep{IemhoffT}, i.e.~we define a system $\ARN$ for the 
$\altNNILpar$-admissible rules of $\IPC$ and also introduce 
a Kripke semantic for it and prove the soundness and completeness. 
Finally using this  and the results
in \Cref{Sec-Ghil} we prove that $\ARN$ is sound and complete for 
both $\altNNILpar$-admissibility and $\altdNNILpar$-preservativity, 
i.e.~$\ARN\vdash A\rhd B$ iff $A\altarn    B $ iff $A\altprdnpar B$.
\subsection{The system $\ARN$}\label{ARp}
 
$\ARN$ is a system which proves  formulas 
in the form $A\rhd B$, and $A,B\in\lcalz$. 
Before we continue with the axioms and rules of the 
system $\ARN$, let us first define a notation. 

\begin{equation*}
\itp{A}{B}:=\begin{cases}
B \quad &: B\in\parr\cup\{\bot\}\\
A\to B &: \text{otherwise}
\end{cases}
\end{equation*}
\noindent  Then $\ARN$ is defined as $\BAR\IPC$ (as defined in \Cref{pres-admis}) plus the following axiom and rules:

\begin{itemize}[leftmargin=1cm]
\item[$\VAR:$]\quad $B\to C\rhd \bigvee_{i=1}^{n+m}
\itp{B}{E_i}$, in which  
$B=\bigwedge_{i=1}^n (E_i\to F_i)$ and 
$C=\bigvee_{i=n+1}^{n+m} E_i$.
\end{itemize}	
\begin{center}
\seventaghir{%
\Ax{$B\rhd A$}
		\Ax{$C\rhd A$}
		\RLa{Disj}
		\BI{$B\vee C\rhd A$}
		\DP}
\quad \quad\quad \quad
\Ax{$A\rhd B$}
\RLa{$\mont(\parr)$}
\LLa{($p\in\parr$)}
\UI{$p\to A\rhd p\to B$}
\DP
\end{center}

\begin{remark}
The system $\AR$, as defined in \citep{IemhoffT}, 
is $\ARN$  with   $\parr=\emptyset$. The Visser rule $\VAR$ in this case 
is proved to be of central importance \citep{iemhoff2005intermediate}.
\end{remark}
\begin{remark}
As we will see in \Cref{montagna-nnil}, 
the following extension of the Montagna's rule is 
admissible in $\ARN$: 
\begin{prooftree}
\Ax{$A\rhd B$}
\LLa{\uparan{$E\in\NNILpar$}}
\RLa{.}
\UI{$E\to A\rhd E\to B$}
\end{prooftree}
\end{remark}
\begin{remark}\label{remark-arp}
$\ARN$ is closed under general substitutions $\theta$ with 
$\theta(p)\in\{\top,\bot\}\cup{\parr}$ for every $p\in\parr$, i.e.~$\ARN\vdash A\rhd B$ implies $\ARN\vdash 
\theta(A)\rhd\theta(B)$.
\end{remark}
\begin{proof}
Use induction on the complexity of proof $\ARN\vdash A\rhd B$. All cases are easy and left to the reader.
\end{proof}
The following theorem is from \citep{IemhoffT}. 
\begin{theorem}
$A\ar B$ iff $\AR\vdash A\rhd B$.    
\end{theorem}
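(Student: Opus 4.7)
The plan is to establish the two directions separately. For soundness, I observe that $A\ar B$ is exactly $A\adsm{\IPC}{\{\top\}}B$. Applying \cref{gen-pres-sound} with $\sft=\IPC$ and $\Gamma=\{\top\}$ handles $\ax$, Cut, Conj and Disj at once, since $\IPC$ is substitution-closed and $\{\top\}$ is both substitution-closed and $\IPC$-prime (by the disjunction property of $\IPC$). The rule $\mont(\parr)$ is vacuous because $\parr=\emptyset$. The only remaining obligation is Visser's rule $\VAR$: if a substitution $\theta$ unifies $B\to C$ with $B=\bigwedge_{i=1}^{n}(E_i\to F_i)$ and $C=\bigvee_{i=n+1}^{n+m} E_i$, then $\theta$ must unify at least one disjunct $\itp{B}{E_i}$. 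This is the admissibility of the classical Visser rule, proved by applying the disjunction property of $\IPC$ to the premise $\vdash\theta(B)\to\theta(C)$.

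For completeness, I would use the projective resolution of \cref{Projec-resol} specialised to $\parr=\emptyset$, which in this case coincides with Ghilardi's finitary theorem \cref{Theorem-IPC-Finitary}. Let $\Pi=\{A_1,\ldots,A_n\}$ be the projective resolution of $A$. For each $i$, fix an $A_i$-projective unifier $\theta_i$, so $\vdash\theta_i(A_i)$. Since $\vdash A_i\to A$ we obtain $\vdash\theta_i(A)$; the hypothesis $A\ar B$ then yields $\vdash\theta_i(B)$, and \cref{proj-pres-admiss} converts this back into $\vdash A_i\to B$. Axiom $\ax$ gives $\AR\vdash A_i\rhd B$ for every $i$, and Disj assembles them into $\AR\vdash\bigvee\Pi\rhd B$.

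The remaining step is to prove $\AR\vdash A\rhd\bigvee\Pi$; combined with Cut, this completes the derivation $\AR\vdash A\rhd B$. This is the hard part. I would proceed by induction on the complexity of $A$, following the strategy of \citep{IemhoffT}. Atomic, conjunctive and disjunctive cases reduce routinely to $\ax$, Conj and Disj, using the compositional behaviour of projective resolutions under $\wedge$ and $\vee$. The critical case is $A=C\to D$: here I would first use \cref{ipc-component-dec} to rewrite $A$ in a canonical shape $\bigwedge(E_i\to F_i)\to\bigvee G_j$, then fire Visser's rule $\VAR$ to reduce $A$ to a disjunction of formulas $\itp{B}{E_i}$, each of which is either strictly simpler or already identifiable with a projective component. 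The inductive hypothesis applied to each Visser disjunct, reassembled via Disj and Cut, then produces $\AR\vdash A\rhd\bigvee\Pi$. The main obstacle, and the technical heart of the argument, is verifying that the Visser right-hand side collapses, up to $\IPC$-equivalence, to precisely $\bigvee\Pi$; this relies on the extendibility characterization of projectivity in \cref{Theorem-Ghil} together with the disjunction property for extendible formulas (\cref{Remark-extendible-disjun}).
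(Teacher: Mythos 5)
First, a point of comparison: the paper does not actually prove this statement --- it is imported from \citep{IemhoffT} as a citation, and inside the paper it also follows as the special case $\parr=\emptyset$ of \cref{Characterization-admissibility}, since then $\NNILpar$ collapses to $\{\top,\bot\}$, so $\arn$ becomes $\ar$ and $\ARN$ becomes $\AR$. Your overall architecture --- soundness rule by rule, completeness via a projective resolution $\Pi$ of $A$ split into $\AR\vdash A\rhd\bigvee\Pi$ and $\vdash\bigvee\Pi\to B$ --- is indeed Iemhoff's route, and your use of \cref{proj-pres-admiss} to convert $\vdash\theta_i(B)$ back into $\vdash A_i\to B$ is correct.

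There are, however, two genuine gaps. (1) Your justification of the soundness of $\VAR$ does not work as stated: the premise $\vdash\theta(B)\to\theta(C)$ is a provable \emph{implication}, not a provable disjunction, so the disjunction property of $\IPC$ cannot be applied to it. What is needed is the gluing argument: assuming every disjunct $\theta(\itp{B}{E_i})$ is unprovable, take for each $i$ with $E_i\neq\bot$ a rooted model forcing $\theta(B)$ but not $\theta(E_i)$, place all of them above a fresh root, and check that the root forces $\theta(B)$ but not $\theta(C)$; this is exactly the argument the paper runs (in relativised form) in the $\VAR$ case of \cref{Lem-ARN-implies-arpn}. (2) In the completeness direction, the derivation $\AR\vdash A\rhd\bigvee\Pi$ is the entire content of the theorem, and you leave precisely its critical case unproved, acknowledging it as ``the main obstacle''. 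Moreover the induction you propose is not obviously well-founded: after firing $\VAR$ on $A=B\to C$ the disjuncts $\itp{B}{E_i}=B\to E_i$ are not structurally smaller than $A$ (each $E_i$ is a subformula of $B$), so ``induction on the complexity of $A$'' needs a carefully designed measure, in the spirit of $\ofrak{A}$ in \cref{star-prop}; likewise, projective resolutions are not compositional under $\wedge$ in the routine way you assert, since projectivity is not closed under conjunction. A repair that stays entirely inside the paper's own toolbox is to prove completeness semantically, via \cref{Theorem-ARN-Completeness} together with \cref{Lem-ARN-models-arn} and \cref{Lem-arn-p-extendible} specialised to $\parr=\emptyset$.
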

\noindent 
\begin{lemma}\label{Lem-ARN-implies-arpn}
$\ARN\vdash A\rhd B$ implies $A\altarpn B$.
\end{lemma}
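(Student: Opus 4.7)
The plan is to proceed by induction on the length of the derivation $\ARN\vdash A\rhd B$, handling each inference scheme in turn. The cases for Ax, Conj and Cut are immediate from the definition of $\arn$: for Ax one chains $\vdash\theta(E\to A)$ with $\vdash\theta(A\to B)$ to obtain $\vdash\theta(E\to B)$; Conj is $\wedge$-introduction inside a single choice of $E\in\NNILpar$ and $\theta$; Cut is a sequential application of the two induction hypotheses with the same $E$ and $\theta$.

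The Montagna rule $\mont(\parr)$ is the first place where the shape of $\NNILpar$ is used. Given $A\arn B$ and $p\in\parr$, any hypothesis $\vdash\theta(E\to(p\to A))$ with $E\in\NNILpar$ rewrites, using $\theta(p)=p$, as $\vdash\theta((E\wedge p)\to A)$. Because $p\in\parr\subseteq\NNILpar$ and $\NNILpar$ is closed under conjunction, $E\wedge p\in\NNILpar$, so the induction hypothesis supplies $\vdash\theta((E\wedge p)\to B)$, i.e.\ $\vdash\theta(E\to(p\to B))$.

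The substantive case is the Visser axiom $\VAR$: we must show $(B\to C)\arn \bigvee_{i=1}^{n+m}\itp{B}{E_i}$, where $B=\bigwedge_{i=1}^n(E_i\to F_i)$ and $C=\bigvee_{j=n+1}^{n+m}E_j$. I would argue by contraposition. Fix $D\in\NNILpar$ and a substitution $\theta$ with $\vdash\theta(D\to(B\to C))$, and suppose a finite rooted tree model $\kcal$ with root $w_0$ satisfies $\kcal,w_0\Vdash\theta(D)$ while $\kcal,w_0\nVdash\theta(\itp{B}{E_i})$ for each $i$. For each index with $E_i\notin\parr\cup\{\bot\}$ this refutation unfolds as a witness node $w_i\sce w_0$ such that $\kcal,w_i\Vdash\theta(B)$ and $\kcal,w_i\nVdash\theta(E_i)$, while for $E_i\in\parr$ one has $\kcal,w_0\nVdash E_i$ directly. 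Since $D\in\NNILpar$ is $\NNILpar$-projective via the identity substitution, \cref{Theorem-Ghil-Ext} gives $\parr$-extendibility of $\Mod D$, and \cref{Lem-extendible} transfers this to $\theta(\Mod D)$. By \cref{Lem-sub-swap} and stability of $\Mod D$, the submodels $\theta(\kcal_{w_i})$ all lie in $\Mod D$ and $\parr$-embed into $\theta(\kcal)\in\Mod D$. Extendibility then supplies a $\parr$-variant of the gluing of these submodels with a fresh root, still inside $\Mod D$; pulling back through \cref{Lem-sub-swap} yields a Kripke model $\kcal'$ with a new root at which $\theta(D)$ holds, every conjunct $\theta(E_i\to F_i)$ of $\theta(B)$ holds (by persistence from the $w_i$'s where $\theta(B)$ was valid, together with a suitable choice of the new root's valuation), and every disjunct $\theta(E_j)$ of $\theta(C)$ fails (parametric ones because the new root copies parameters from $w_0$, non-parametric ones at the corresponding $w_j$). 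This refutes $\theta(D\to(B\to C))$, contradicting the hypothesis.

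The main obstacle is precisely this construction in the $\VAR$-case: one must exhibit a single $\parr$-variant of the glued model that simultaneously witnesses $\theta(D)$, forces every conjunct of $\theta(B)$ at the new root, and refutes every disjunct of $\theta(C)$ there. The Ghilardi-style equivalence between $\NNILpar$-projectivity and $\parr$-extendibility from \cref{Theorem-Ghil-Ext} is what provides the degree of freedom at the fresh root—non-parameter valuations may be adjusted without disturbing the copied parameter values—needed to reconcile all three requirements at once.
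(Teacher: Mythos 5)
Your treatment of Ax, Conj, Cut, $\mont(\parr)$ and $\VAR$ matches the paper's proof in substance; in particular the $\VAR$ case is handled, as in the paper, by gluing the countermodels for the various disjuncts under a fresh root, using the $\parr$-extendibility of the $\NNILpar$ premise supplied by \cref{Theorem-Ghil-Ext}. One remark on that case: since $\theta$ is the identity on $\parr$ and $D\in\NNILpar$, one has $\theta(D)=D$, so the detour through $\theta(\Mod{D})$, \cref{Lem-extendible} and a ``pull-back'' along \cref{Lem-sub-swap} is both unnecessary and, as stated, problematic --- a $\parr$-variant of the glued model produced inside $\theta(\Mod{D})$ need not be of the form $\theta(\kcal'')$ for any model $\kcal''$, so there is nothing to pull back. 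You should simply apply $\parr$-extendibility of $\Mod{D}$ itself to the generated submodels $\kcal_{w_i}$, which satisfy $D$ by persistence; the rest of your argument then goes through.

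The genuine gap is the disjunction rule. The system $\ARN$, as it is actually used in the paper (its own proof of this lemma treats Disj as a case, and the completeness proof of \cref{Theorem-ARN-Completeness} appeals explicitly to ``the disjunction rule''), contains Disj: from $B\rhd A$ and $C\rhd A$ infer $B\vee C\rhd A$. Its soundness for $\arn$ is not immediate and your induction does not cover it: given $E\in\NNILpar$ with $\vdash E\to\theta(B\vee C)$, one cannot in general conclude $\vdash E\to\theta(B)$ or $\vdash E\to\theta(C)$, because $E$ need not be $\IPC$-prime. The paper closes this case by first passing from $\NNILpar$-admissibility to $\pNNILpar$-admissibility via \cref{pnnilpv-nnilp}, which in turn rests on the decomposition of every $\NNILpar$ formula into a disjunction of $\IPC$-prime components (\cref{ipc-component-dec,component-extendible-prime,Lem-nnil-normal-form}); for prime $E$ the disjunction splits and the two induction hypotheses apply. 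Without this step the induction does not go through, so the Disj case must be added along these lines.
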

\begin{proof}
We use induction on the complexity of  the proof 
$\ARN\vdash A\rhd B$. All cases are easy except for
the axiom $\VAR$ and the rules  ${\mont(\parr)}$ and Disj.
\begin{itemize}[leftmargin=*]
\item \textit{$\VAR$:} Let $C=\bigwedge_{i=1}^n 
(E_i\to F_i)$ and  $D=\bigvee_{i=n+1}^{n+m} E_i$. 
We show $C\to D\altarn \bigvee_{i=1}^{n+m}\itp{C}{E_i}$. 
So assume that $\theta$ is a substitution 
and $G\in\pNNILpar$ and show that
$  \vdash G\to \theta(C\to D)$ implies 
$  \vdash G\to 
\theta(\bigvee_{i=1}^{n+m}\itp{C}{E_i})$. 
We reason by contraposition.  
Let $  \nvdash G\to 
\theta(\bigvee_{i=1}^{n+m}\itp{C}{E_i})$.
Hence for every $i\leq n+m$ we have 
$  \nvdash G\to \theta(\itp{C}{E_i})$. Then 
for every $i$ there is some Kripke model $\kcal_i$
with the root $w_i$
such that $\kcal_i\Vdash G$ and 
$\kcal_i\nVdash \theta({E_i})$ and 
moreover for every $i$ 
with $E_i\not\in\parr\cup\{\bot\}$
we have $\kcal_i\Vdash \theta(C)$.  Then
\Cref{component-extendible-prime} implies that  $G$ is extendible. 
Let $\kcal$ be a variant of $\sum(\{\kcal_i\}_i)$
such that $\kcal\Vdash G$ and let $w_0$ be its root. 
Also define  $\kcal'$  as follows: 
for every $i$ such that $E_i\in\parr\cup\{\bot\}$, 
eliminate  $\kcal_i$ (we mean its nodes) from $\kcal$.
Then evidently $\kcal'$ is a submodel of $\kcal$. 
Since $G$ is $\NNIL$, by \Cref{Theorem-NNIL-Submodel}, 
we have $\kcal'\Vdash G$. 
It is sufficient to show that $\kcal'\Vdash \theta(C)$
and 
$\kcal'\nVdash \theta(D)$. Since for every node $w$ in 
$\kcal'$ other than $w_0$, we have $\kcal',w\Vdash \theta(C)$, 
if we show $\kcal',w_0\nVdash \theta(E_i)$ for every $i$, 
we have both  $\kcal'\Vdash \theta(C)$
and $\kcal'\nVdash \theta(D)$ and we are done.
We have two cases. (1)  $E_i\in\parr\cup\{\bot\}$. 
In this case we have $\theta(E_i)=E_i$, and since 
$\kcal,w_i\nVdash E_i$ we have $\kcal,w_0\nVdash E_i$
and hence $\kcal',w_0\nVdash E_i$. (2) 
$E_i\not\in\parr\cup\{\bot\}$. Since $w_i$ in this case 
is a node of $\kcal'$ and $\kcal',w_i\nVdash 
\theta(E_i)$, we have $\kcal',w_0\nVdash \theta(E_i)$.
\item $\mont(\parr)$:
Let $A\altarn B$ and show 
$p\to A\altarn p\to B$ for every $p\in\parr$. 
Let $\theta$ be a substitution and $E\in\NNILp$ such that 
$  \vdash E\to \theta(p\to A)$. 
Hence $  \vdash (E\wedge p)\to \theta(A)$. Then 
by $A\altarn B$ we have $  \vdash (E\wedge p)\to 
\theta(B)$ and hence $  \vdash E\to \theta(p\to B)$, as desired.
\item \textit{Disj:} Let $B\altarn A$ and 
$C\altarn A$ and show $B\vee C\altarn A$. 
\Cref{pnnilpv-nnilp} and $B\altarn A$ and 
$C\altarn A$ imply $B\altarpn A$ and 
$C\altarpn A$. 
Let  $E\in\pNNILp$ and $\theta$ a substitution 
such that $  \vdash E\to \theta(B\vee C)$. 
Since $E$ is prime,
either we have $  \vdash E\to\theta(B)$
or $  \vdash E\to \theta(C)$. In either of the cases,
by $B\altarpn A$ and $C\altarpn A$ we have 
$  \vdash E\to \theta(A)$.
So by this argument we may conclude that $(B\vee C)\altarpn A$ and then 
by \Cref{pnnilpv-nnilp} we have $(B\vee C)\altarn A$.
\qedhere
\end{itemize}
\end{proof}
\begin{corollary}\label{Cor-ARN-implies-arn}
$\ARN\vdash A\rhd B$ implies $A\altarn B$. 
\end{corollary}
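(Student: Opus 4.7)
The plan is to observe that this corollary is an immediate consequence of the preceding \autoref{Lem-ARN-implies-arpn} together with \autoref{pnnilpv-nnilp}. There is essentially no new work to do; we just need to chain the two facts.

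In detail, \autoref{Lem-ARN-implies-arpn} gives us that $\ARN\vdash A\rhd B$ implies $A\arn B$, that is, admissibility with respect to the class $\nnilpar$. On the other hand, \autoref{pnnilpv-nnilp} establishes the equality of the two admissibility relations
\[
{\adsm{\IPC}{\pnnilpar}\ \ \ \ \ } = {\adsm{\IPC}{\nnilpar}\ \ \ },
\]
which was itself obtained via the normal form \autoref{Lem-nnil-normal-form} expressing every $\nnilpar$ proposition as a disjunction of $\pnnilpar$ propositions modulo $\IPC$-provable equivalence, combined with \autoref{vee-pres}. So I would simply write that from $\ARN\vdash A\rhd B$ we get $A\arn B$ by \autoref{Lem-ARN-implies-arpn}, and then $A\arpn B$ by \autoref{pnnilpv-nnilp}.

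There is no genuine obstacle here: the substantive content has already been discharged in \autoref{Lem-ARN-implies-arpn} (where Visser's rule $\VAR$, the parametric Montagna rule, and the disjunction rule were each separately verified for $\arn$) and in \autoref{pnnilpv-nnilp} (which reduces $\nnilpar$-admissibility to $\pnnilpar$-admissibility via prime decomposition). The corollary is really just a restatement of the lemma in the form that will be convenient for later use, since $\pnnilpar$-admissibility is the more refined and syntactically tractable notion when one wants to invoke primality of the antecedent (as in the Disj case of the lemma's proof).
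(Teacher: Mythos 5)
Your proposal is correct and matches the paper's own proof exactly: the paper also derives this corollary by chaining \autoref{Lem-ARN-implies-arpn} with \autoref{pnnilpv-nnilp}. Nothing further is needed.
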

\begin{proof}
Use \Cref{Lem-ARN-implies-arpn,pnnilpv-nnilp}.
\end{proof}

\begin{corollary}\label{Cor-ARN-implies-IPC-deriv}
For every $A\in\NNILpar$ and $B\in\lcalz$, 
if $\ARN\vdash A\rhd B$ then 
$  \vdash A\to B$.
\end{corollary}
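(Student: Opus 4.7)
The plan is to reduce the claim directly to the admissibility-style consequence already derived in \cref{Cor-ARN-implies-arn}, and then to exploit the fact that $A$ itself lies in the index class $\NNILpar$ so that one may take it as the witness.

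First, from the hypothesis $\ARN\vdash A\rhd B$, I would invoke \cref{Cor-ARN-implies-arn} to obtain $A\arpn B$, and then \cref{pnnilpv-nnilp} to upgrade this to $A\arn B$. Unfolding the definition of $\arn$ given in \cref{pres-admis}, this says: for every substitution $\theta$ (by default, identity on $\parr$) and every $C\in\NNILpar$,
\[\vdash \theta(C\to A) \quad\text{implies}\quad \vdash \theta(C\to B).\]

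The key step is the specialization. Since by hypothesis $A\in\NNILpar$, $A$ is itself a legitimate choice for the premise $C$. Taking $C:=A$ and $\theta$ to be the identity substitution, the antecedent becomes $\vdash A\to A$, which is trivially an $\IPC$-theorem. Hence the consequent $\vdash A\to B$ follows immediately.

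There is no real obstacle; the statement is a one-line application of the definition of $\NNILpar$-admissibility together with \cref{Cor-ARN-implies-arn}. One could equivalently argue via preservativity, using that $A\prdnpar B$ (cf.\ \cref{pres-admis-rel}) applied to the trivial lower bound $A$ of itself; but the admissibility route is the most direct.
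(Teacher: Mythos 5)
Your proof is correct and follows essentially the same route as the paper: the paper likewise derives $A\arn B$ (directly via \cref{Lem-ARN-implies-arpn}, whereas you take a harmless detour through \cref{Cor-ARN-implies-arn} and \cref{pnnilpv-nnilp}) and then specializes the admissibility statement to the identity substitution with $A$ itself as the $\NNILpar$-witness, using $\vdash A\to A$.
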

\begin{proof}
Let $\ARN\vdash A\rhd B$. Then by 
\Cref{Cor-ARN-implies-arn}, we have 
$A\altarn B$. Let $\theta$ be identity substitution. Then we have 
$  \vdash A\to \theta(A)$. Hence $  \vdash A\to \theta(B)$, which implies 
$  \vdash A\to B$, as desired.
\end{proof}
\subsection{$\ARN$-models}\label{sec-ARmod}
Before we define $\ARN$-models, the Kripke models for 
which $\ARN$ is sound and complete, let us present some 
definitions.  
Let $\kcal=(W,\pce,\V)$ is a Kripke model, not necessarily  finite tree. 
All over the rest of this subsection we assume that 
in general a Kripke model is not necessarily  finite tree. 
Given  a set $\Gamma$   of formulas,
two nodes $v,w\in W$ are called 
\textit{$\Gamma$-similar},
notation $v\eqg w$, if for every $A\in\Gamma$
we have $\kcal,v\Vdash A$ iff $\kcal,w\Vdash A$. 
Let $W'\subseteq W$ is a set of nodes and $w\in W$.
The notation $w\pce W'$, means $w\pce w'$ for every 
$w'\in W'$.  
We say that 
$w\in W$ is a \textit{tight predecessor} of $W'$, if 
$w\pce W'$ and for every $u\succcurlyeq w$, 
either $u=w$ or $u\succcurlyeq v$ for some $v\in W'$.
A node $w$ is called a \textit{base}, if for every 
finite set $W'\subseteq W$ such that $w\pce W'$, there
is  some $w'\in W$ such that:
$w\pce w'\pce W'$ and $w\eqp w'$ and 
$w'$ is a tight predecessor of $W'$. And finally,
a  Kripke model $\kcal=(W,\pce,\V)$ is an $\ARN$-model if
it is rooted (let $w_0$ be its root) and 
there is some set  $\basisworlds W\subseteq W$ with the following 
properties:
\begin{itemize}
\item $w_0\in \basisworlds{W}$,
\item every $w\in \basisworlds{W}$ is a base,
\item for every $w'\in \basisworlds{W}$ and $w\succcurlyeq w'$, 
there is some $v\in \basisworlds{W}$ such that $v\eqp w$ and 
$w'\pce v\pce w$.
\end{itemize}
Such $\basisworlds{W}$ is called a \textit{base-set} for $\kcal$. 
\\
We say that $\kcal$ is \textit{good}, if for every 
finite set of nodes $W'$, and every 
$X\subseteq \parr$
such that $\kcal,W'\Vdash X$,
there is some $w'\in \basisworlds{W}$ such that $w'\pce W'$ and 
$\kcal(w')\cap\parr= X$.

\begin{remark}\label{Remark-ARN-models}
Let $\kcal=(W,\pce,\V)$ is an $\ARN$-model
with a base-set $\basisworlds{W}$, and $w\in \basisworlds{W}$.
Then $\kcal_w$ is also an $\ARN$-model with the 
base-set $\basisworlds{W}_w:=\{v\in \basisworlds{W}: v\succcurlyeq w\}$.
\end{remark}
\begin{theorem}\label{Theorem-ARN-Soundness}
\textup{(\textbf{Soundness})}
$\ARN\vdash A\rhd B$ implies $\kcal\Vdash B$, 
for every $\ARN$-model $\kcal$ with $\kcal\Vdash A$.
\end{theorem}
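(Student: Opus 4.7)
The plan is to proceed by induction on the length of the derivation $\ARN\vdash A\rhd B$. The base case $\sf Ax$ follows from standard intuitionistic Kripke soundness applied at the root of $\kcal$; the rules $\sf Conj$ and $\sf Cut$ are immediate from the inductive hypothesis; and $\sf Disj$ is handled by observing that the root $w_0$ of the rooted $\ARN$-model $\kcal$ forces $B\vee C$ only if it forces one of the disjuncts, so $\kcal\Vdash B$ or $\kcal\Vdash C$, and IH yields $\kcal\Vdash A$.

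For the Montagna rule ${\sf Mont}(\parr)$ with $p\in\parr$, suppose $A\rhd B$ is sound and let $\kcal\Vdash p\to A$ have base-set $W_b$. Given any $w\succcurlyeq w_0$ with $\kcal,w\Vdash p$, the defining clause of a base-set yields some $v\in W_b$ with $w_0\pce v\pce w$ and $v\eqp w$; since $p\in\parr$, this forces $\kcal,v\Vdash p$, so every node of $\kcal_v$ satisfies $p$ and therefore $A$ via the inherited $p\to A$. Thus $\kcal_v\Vdash A$, and by \cref{Remark-ARN-models} the restriction $\kcal_v$ is itself an $\ARN$-model, so IH delivers $\kcal_v\Vdash B$, whence $\kcal,w\Vdash B$. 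Therefore $\kcal\Vdash p\to B$.

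The main obstacle is the Visser axiom $\VAR$. Write $B=\bigwedge_{i=1}^n(E_i\to F_i)$ and $C=\bigvee_{i=n+1}^{n+m}E_i$, and suppose $\kcal\Vdash B\to C$; assume for contradiction that $\kcal,w_0\nVdash\itp{B}{E_i}$ for every $i\in\{1,\ldots,n+m\}$. Unpacking the definition of $\itp{\cdot}{\cdot}$, this gives $\kcal,w_0\nVdash E_i$ whenever $E_i\in\parr$, while for each $i$ with $E_i\notin\parr\cup\{\bot\}$ there is some $w_i\succcurlyeq w_0$ with $\kcal,w_i\Vdash B$ and $\kcal,w_i\nVdash E_i$. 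Set $W':=\{w_i:E_i\notin\parr\cup\{\bot\}\}$ and apply the base property of $w_0\in W_b$ to obtain a tight predecessor $w'$ of $W'$ satisfying $w_0\pce w'\pce W'$ and $w_0\eqp w'$. The crucial step is to verify $\kcal,w'\Vdash B$: for each $j\le n$ and each $u\succcurlyeq w'$ with $u\Vdash E_j$, if $u\neq w'$ then tightness gives $u\succcurlyeq w_i$ for some $w_i\in W'$, so $u\Vdash B$ and hence $u\Vdash F_j$; if $u=w'$, we derive $w'\nVdash E_j$ by trichotomy on $E_j$, so the implication holds vacuously. Namely, $E_j\in\parr$ together with $w'\Vdash E_j$ and $w_0\eqp w'$ would contradict $\kcal,w_0\nVdash E_j$; $E_j=\bot$ makes $w'\Vdash E_j$ impossible; and $E_j\notin\parr\cup\{\bot\}$ puts $w_j$ into $W'$, so $w'\pce w_j$ combined with $w'\Vdash E_j$ would contradict $w_j\nVdash E_j$. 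Hence $w'\Vdash B$, and together with $w'\Vdash B\to C$ this forces $w'\Vdash E_k$ for some $k>n$; running the same trichotomy on $E_k$ yields a contradiction. The principal technical difficulty is precisely this tight-predecessor argument, which uses the $\eqp$-invariance clause and the tightness clause of a base simultaneously.
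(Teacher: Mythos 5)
Your proposal is correct and follows essentially the same route as the paper's proof: induction on the derivation, with the Montagna case handled by passing to a base node $v\eqp w$ below $w$ and invoking \cref{Remark-ARN-models}, and the Visser axiom handled via a tight predecessor of the witnesses $w_i$ together with the trichotomy on whether $E_j$ lies in $\parr\cup\{\bot\}$. The only cosmetic differences are that you argue the $\VAR$ case by contradiction where the paper argues by contraposition, and you spell out the (trivial) Disj case explicitly.
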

\begin{proof}
We use induction on the proof $\ARN\vdash A\rhd B$. All 
cases are trivial except for the axiom $\VAR$ and the 
rule $\mont$. First we treat Montagna's Rule. 
As induction hypothesis, let $\kcal\Vdash  A$
implies $\kcal\Vdash B$, for every $\ARN$-model $\kcal$. 
Also let $\kcal\Vdash p\to A$ for some $p\in\parr$
and $\ARN$-model $\kcal=(W,\pce,\V)$ with the base-set
$\basisworlds{W}$. 
We will show $\kcal\Vdash p\to B$. Let $w\in W$ 
such that $\kcal,w\Vdash p$. 
Since $\kcal$ is an $\ARN$-model, there is some 
$w'\in \basisworlds{W}$ such that $w\eqp w'$ and $w'\pce w$.
Then $\kcal,w'\Vdash p$ and hence $\kcal,w'\Vdash A$.
Observe that  $\kcal_{w'}$ is also an $\ARN$-model and 
$\kcal_{w'}\Vdash A$.  Hence by induction hypothesis 
$\kcal_{w'}\Vdash B$, which implies 
$\kcal,w\Vdash B$, as desired. 
\\
Next we show 
$\kcal\Vdash \VAR$ for every $\ARN$-model 
$\kcal=(W,\pce,\V)$ with the root $w_0$. 
Let $B=\bigwedge_{i=1}^n (E_i\to F_i)$ and 
$C=\bigvee_{i=n+1}^{n+m} E_i$. 
Also assume 
that $\kcal,w_0\nVdash \bigvee_{i=1}^{n+m}\itp{B}{E_i}$.
We show that 
$\kcal,w_0\nVdash B\to C$.
By definition of $\itp{B}{E_i}$, for every 
$E_i\in\parr\cup\{\bot\}$, we have 
$\kcal,w_0\nVdash E_i$, 
and for every $E_i\not\in \parr\cup\{\bot\}$, there is 
some $w_i\succcurlyeq w_0$ 
such that $\kcal,w_i\Vdash B$ and $\kcal,w_i\nVdash E_i$. 
Let $W':=\{w_i: E_i\not\in\parr\cup\{\bot\} \}$. There 
is some $w'\in {W}$   which is a tight predecessor 
of $W'$ and $w'\eqp w_0$. We show that $\kcal,w'\nVdash 
B\to C$ by showing $\kcal,w'\Vdash B$ and 
$\kcal,w'\nVdash C$. Let $E_i$ be some disjunct in $C$. 
If $E_i\in\parr\cup\{\bot\}$, then since $w'\eqp w_0$ 
and $\kcal,w_0\nVdash E_i$, we have 
$\kcal,w'\nVdash E_i$. Otherwise, since 
$\kcal,w_i\nVdash E_i$ and $w'\pce w_i$, we have 
$\kcal,w'\nVdash E_i$. This finishes showing 
$\kcal,w'\nVdash C$. Then we show $\kcal,w'\Vdash B$.
Let $E_i\to F_i$ is a conjunct in $B$. Consider some 
$w\succcurlyeq w'$ such that $\kcal,w\Vdash E_i$. 
Since $w'$ is a tight predecessor
of $W'$, either we have $w=w'$ or $w\succcurlyeq w_j$ for 
some $w_j\in W'$. If $w\succcurlyeq w_j$, since 
$\kcal,w_j\Vdash B$, we have $\kcal,w\Vdash B$ and then 
$\kcal,w\Vdash E_i\to F_i$, whence $\kcal,w\Vdash F_i$. 
Also if $w=w'$, then by the following argument, we have  
$\kcal,w'\nVdash E_i$, a contradiction with our first
assumption $\kcal,w\Vdash E_i$. 
Finally,  the argument for 
$\kcal,w'\nVdash E_i$: if $E_i\in\parr\cup\{\bot\}$,
then since $w'\eqp w_0$ 
and $\kcal,w_0\nVdash E_i$, we have 
$\kcal,w'\nVdash E_i$. Otherwise, since 
$\kcal,w_i\nVdash E_i$ and $w'\pce w_i$, we have 
$\kcal,w'\nVdash E_i$.
\end{proof}
\noindent
We follow the methods in 
\citep{IemhoffT} to prove the completeness theorem. 
This proof is almost identical to the one for \citep[proposition 7.2.2]{IemhoffT}. 
First some definitions and lemmas.
A set $w$ of formulas is $\IPC$-saturated if 
\begin{itemize}
\item $w\vdash   A$ implies $A\in w$,
\item $\bot\not\in w$,
\item $A\vee B\in w$ implies either $A\in w$ or $B\in w$.
\end{itemize}
Also  $w$ is called  $\ARN$-saturated if 
it is $\IPC$-saturated and 
\begin{itemize}
\item If $\ARN\vdash A\rhd B$ and $A\in w$, then $B\in w$.
\end{itemize} 
Let $*(.)$ is a property on sets of formulas. 
We say that $*(.)$ is an \textit{extendible} property 
if the following conditions hold:
\begin{itemize}
\item If $*(w)$ and 
$w\vdash   A$, then $*(w\cup\{A\})$.
\item If $*(w\cup\{A\vee B\})$ then 
either $*(w\cup\{A\})$ or $*(w\cup\{B\})$ hold.
\item \eighttaghir{%
If $\{w_i\}_{i\in\nat}$ is an infinite sequence s.t.~$*(w_i)$ and $w_i\subseteq w_{i+1}$ for every $i$, then $*(\bigcup_{i\in\nat}w_i)$.}
\end{itemize}
If also the following condition holds, we say that 
$*(.)$ is $\ARN$-extendible property. 
\begin{itemize}
\item If $*(w)$ and $\ARN\vdash w\rhd A$, then $*(w\cup\{A\})$.
\end{itemize}
In the above expression, $\ARN\vdash w\rhd A$ is 
a shorthand for $\ARN\vdash (\bigwedge_i B_i)\rhd A$ for some \textit{finite} set	$\{B_i\}_i\subseteq w$.
\begin{lemma}\label{Lem-Max-Sat}
For every extendible property $*(.)$, if $*(w)$
for some set $w$ of formulas holds, 
there is some maximal 
$\IPC$-saturated $w'\supseteq w$ such that 
$*(w')$. Moreover if $*(.)$ is $\ARN$-extendible,
then $w'$ is also $\ARN$-saturated. 
\end{lemma}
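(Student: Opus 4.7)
The plan is to carry out a standard Lindenbaum-style saturation argument. Enumerate the propositions of $\lcalz$ as $A_0, A_1, \ldots$ (in the concrete setting of this paper, with finitely many atomics and working modulo $\IPC$-provable equivalence, this enumeration is in fact finite). We inductively define a chain $w = w_0 \subseteq w_1 \subseteq \ldots$ maintaining $*(w_n)$ at every stage. At stage $n$, inspect $A_n$: if $*(w_n \cup \{A_n\})$ fails, set $w_{n+1} := w_n$; if it holds and $A_n$ is not a disjunction, set $w_{n+1} := w_n \cup \{A_n\}$; if it holds and $A_n = B \vee C$, then by the second clause of extendibility applied to $w_n \cup \{B \vee C\}$ we may pick $D \in \{B,C\}$ with $*(w_n \cup \{D\})$, and set $w_{n+1} := w_n \cup \{A_n, D\}$. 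Finally let $w' := \bigcup_n w_n$.

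Three things must then be verified. (i) $*(w')$ holds: in the finite case this is immediate since $w' = w_N$ for some $N$; in general this requires a compactness/finitariness property of $*$ that is satisfied in all concrete instances used in this paper (where $*$ is expressed through the existence of a witnessing Kripke-semantic object on a fixed finite signature). (ii) $w'$ is $\IPC$-saturated: if $w' \vdash A_n$, the first extendibility clause gives $*(w' \cup \{A_n\}) = *(w')$, and tracing back one obtains $*(w_n \cup \{A_n\})$, forcing $A_n$ to have been added at stage $n$; the disjunction property is precisely ensured by our splitting rule on $A_n = B \vee C$; and $\bot \notin w'$ holds because $*$ is incompatible with $\bot$ (an implicit consistency assumption in every intended application). (iii) Maximality: if some $\IPC$-saturated $v \supsetneq w'$ satisfies $*(v)$, pick $A_n \in v \setminus w'$; since $v \supseteq w_n$ and $*(v)$ restricts to $*(w_n \cup \{A_n\})$ (again using a compactness-style argument), we would have added $A_n$ at stage $n$, a contradiction.

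For the moreover clause, assume that $*$ is additionally $\ARN$-extendible. One option is to interleave the construction with $\ARN$-closure steps, i.e.\ at every stage also add all $A$ with $\ARN \vdash w_n \rhd A$, each such addition preserving $*$ by the $\ARN$-extendibility clause. A cleaner post-hoc argument is: if $\ARN \vdash w' \rhd A$, then by the definition this reduces to $\ARN \vdash \bigwedge v \rhd A$ for some finite $v \subseteq w_n$; $\ARN$-extendibility then yields $*(w_n \cup \{A\})$, so $A = A_m$ for some $m \geq n$ and $A$ is added at stage $m$, proving $A \in w'$.

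The main obstacle throughout is the tacit compactness property of $*$, namely that $*$ of a chain union follows from $*$ of its members (equivalently, that the witnessing structure for $*(v)$ can be reconstructed from finite restrictions). This is not part of the formal definition of \emph{(}$\ARN$-\emph{)}extendible property, but it is present in every instance of $*$ invoked in the subsequent completeness proof for $\ARN$; in those instances $*$ asserts the existence of a suitable $\ARN$-model component on a fixed finite fragment of $\lcalz$, and the verification becomes routine.
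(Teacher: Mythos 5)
Your construction is essentially the paper's: both are Lindenbaum-style chain arguments that add $A_n$ to $w_n$ exactly when $*(w_n\cup\{A_n\})$ holds and take the union, so the overall approach is the same. The one substantive difference is the enumeration. The paper lists every proposition \emph{infinitely often}; you make a single pass and compensate by splitting disjunctions at the moment they are added. Your splitting step is sound (from $*(w_n\cup\{B\vee C\})$ the second clause yields $*(w_n\cup\{D\})$ for some disjunct $D$, and since $w_n\cup\{D\}\vdash B\vee C$ the first clause restores $*$ after re-adding $B\vee C$), and it gives the disjunction property without auxiliary hypotheses. But the single pass costs you elsewhere: for deductive closure, for maximality, and for the moreover clause you must pass from $*(w'\cup\{A\})$ or $*(w_n\cup\{A\})$ back to $*(w_m\cup\{A\})$ at the possibly \emph{earlier} stage $m$ where $A$ was listed, and that needs antitonicity of $*$ under subsets, not only the limit-stage compactness you flag; in particular, in the moreover clause your ``$A=A_m$ for some $m\geq n$'' has no justification on a single pass. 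With infinite repetition one simply waits for a late enough occurrence of $A$, and the first clause (or $\ARN$-extendibility) alone suffices. Both compactness along chains and antitonicity do hold for every instance of $*$ invoked in the completeness proof, since each is stated via finitary derivability conditions, and the paper's one-line ``easily proved'' tacitly relies on the same facts, so your explicit caveat is fair. One factual slip: over finitely many atomics $\IPC$ still has infinitely many pairwise inequivalent propositions (already in one variable, by the Rieger--Nishimura lattice), so the enumeration is genuinely infinite; this does not affect the argument.
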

\begin{proof}
Let $A_1,A_2,\ldots $ be a list of all formulas 
such that each formula occurs  infinitely often. 
We define a sequence $w=w_0\subseteq w_1\subseteq 
w_2\subseteq \ldots$ and then define $w':=\bigcup_i w_i$. 

\begin{equation*}
w_{n+1}:=\begin{cases}
w_n\cup\{A_n\} \quad &: *(w_n\cup\{A_n\})
\\
w_n   &:\text{otherwise}
\end{cases}
\end{equation*}
It can be easily proved that this $w'$ satisfies all required conditions.
We required that each formula appears in the list infinitely often for the following reason. 
It might be the case that a formula $A$ together with $w_n$ at the stage $n$ does 
not satisfy the property $*()$, while at some future step it will. 
Without this infinite repetition condition, we lose the maximality of $w'$.  
\end{proof}
\begin{corollary}\label{Lem-ARN-saturation}
If $\ARN\nvdash A\rhd B$, then there is some 
$\ARN$-saturated $w$ such that $A\in w$ and $B\not\in w$.
\end{corollary}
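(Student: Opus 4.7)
The plan is to apply \cref{Lem-Max-Sat} to the property
\[
*(w) \quad :\Longleftrightarrow\quad \ARN \nvdash w\rhd B,
\]
starting from the singleton $\{A\}$. The hypothesis $\ARN\nvdash A\rhd B$ is exactly $*(\{A\})$, so if I can verify that $*(\cdot)$ is $\ARN$-extendible, \cref{Lem-Max-Sat} will produce a maximal $\ipc$-saturated, and in fact $\ARN$-saturated, superset $w'\supseteq\{A\}$ with $*(w')$. Then $A\in w'$ by construction, while $B\in w'$ would give $w'\vdash B$, hence $\ARN\vdash w'\rhd B$ by the axiom $\mathsf{Ax}$, contradicting $*(w')$.

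So the core of the argument is checking the three closure conditions for $\ARN$-extendibility; each is handled by contraposition using the axioms and rules of $\ARN$ already available from \cref{pres-admis}. For closure under $\ipc$-consequence, if $w\vdash C$ and $\ARN\vdash w\cup\{C\}\rhd B$, then $\mathsf{Ax}$ gives $\ARN\vdash \bigwedge w\rhd C$, $\mathsf{Conj}$ yields $\ARN\vdash \bigwedge w\rhd \bigwedge w\wedge C$, and $\mathsf{Cut}$ then gives $\ARN\vdash w\rhd B$. For the disjunction-splitting condition, if $\ARN\vdash w\cup\{C\}\rhd B$ and $\ARN\vdash w\cup\{D\}\rhd B$, the $\mathsf{Disj}$ rule combined with the $\ipc$-equivalence $(\bigwedge w\wedge C)\vee(\bigwedge w\wedge D)\lr \bigwedge w\wedge(C\vee D)$ (dispatched via $\mathsf{Ax}$ and $\mathsf{Cut}$) yields $\ARN\vdash w\cup\{C\vee D\}\rhd B$. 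For the genuine $\ARN$-extendibility condition, if $\ARN\vdash w\rhd C$ and $\ARN\vdash w\cup\{C\}\rhd B$, then $\mathsf{Ax}$ and $\mathsf{Conj}$ give $\ARN\vdash w\rhd \bigwedge w\wedge C$, and $\mathsf{Cut}$ with $\ARN\vdash \bigwedge w\wedge C\rhd B$ yields $\ARN\vdash w\rhd B$.

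None of these steps is a genuine obstacle; they are routine manipulations with the propositional base rules $\mathsf{Ax}$, $\mathsf{Conj}$, $\mathsf{Cut}$ and $\mathsf{Disj}$, which are all available in $\ARN$ regardless of the Visser axiom $\VAR$ or the Montagna rule $\mont(\parr)$. The mildly subtle point is just that the set $w$ may be infinite, but the shorthand convention $\ARN\vdash w\rhd A$ refers to a finite conjunction from $w$, so one always works with finite antecedents in the derivations above. Once the three conditions are established, \cref{Lem-Max-Sat} finishes the proof as described.
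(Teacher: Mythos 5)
Your proposal is correct and is exactly the paper's own proof: the paper defines the same property $*(y):\ \ARN\nvdash y\rhd B$, asserts without detail that it is $\ARN$-extendible and that $*(\{A\})$ holds, and invokes \cref{Lem-Max-Sat}. Your verification of the three extendibility conditions via $\mathsf{Ax}$, $\mathsf{Conj}$, $\mathsf{Cut}$ and $\mathsf{Disj}$ just fills in the steps the paper calls ``straightforward.''
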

\begin{proof}
Define the property $*(.)$ as follows:

\begin{equation*}
*(y): \ARN\nvdash y\rhd B.
\end{equation*}
Then it is straightforward to observe that 
$*(.)$ is $\ARN$-extendible and $*(\{A\})$ holds. Hence 
\Cref{Lem-Max-Sat} implies the desired result.
\end{proof}

\begin{theorem}\label{Theorem-ARN-Completeness}
\textup{(\textbf{Completeness})}
$\ARN$ is complete for good $\ARN$-models, i.e.~if 
for every good $\ARN$-model $\kcal$, we have
$\kcal\Vdash A$ implies $\kcal\Vdash B$, 
then $\ARN\vdash A\rhd B$.
\end{theorem}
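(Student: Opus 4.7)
The plan is to argue by contraposition. Suppose $\ARN \nvdash A \rhd B$; I will build a good $\ARN$-model $\kcal$ with $\kcal \Vdash A$ and $\kcal \nVdash B$, following the canonical-model template of \citep{IemhoffT} adapted to the parameter setting. By \cref{Lem-ARN-saturation} there is an $\ARN$-saturated theory $w_0$ with $A \in w_0$ and $B \notin w_0$, which will serve as the root.

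The underlying set $W$ of $\kcal$ will consist of $\IPC$-saturated theories built on top of $w_0$ in a parameter-coherent way, ordered by set inclusion, with the tautological valuation $w \V p$ iff $p \in w$; the base-set $W_b$ will consist of those $w \in W$ that are $\ARN$-saturated. A standard Henkin-style truth lemma, which uses $\IPC$-saturation to handle the $\vee$ and $\to$ clauses, will yield $\kcal, w \Vdash C$ iff $C \in w$ for every $C \in \lcalz$. This immediately gives $\kcal \Vdash A$ and $\kcal \nVdash B$, so everything reduces to checking that $\kcal$ is a good $\ARN$-model.

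The core technical work is the tight-predecessor property for a base node $v \in W_b$ with finite upward neighbourhood $W' = \{w_1, \ldots, w_n\} \subseteq W$: one needs an $\IPC$-saturated $v' \supseteq v$ with $v \eqp v'$, with $v' \pce W'$, and such that every proper successor of $v'$ already extends some $w_j$. My plan is to form the candidate theory generated by $v$ together with the negations of the appropriate implication antecedents witnessing the failure at each $w_j$, and to extend it to an $\IPC$-saturated theory via \cref{Lem-Max-Sat} applied to a carefully chosen extendible property. The Visser axiom $\VAR$ is precisely what ensures the extension does not collapse: were the candidate theory inconsistent in the relevant sense, $\VAR$ together with the $\ARN$-saturation of $v$ would force some disjunct $\itp{B}{E_i}$ into $v$, contradicting the data provided by the $w_j$'s. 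The third base-set clause—producing, for every $w \succcurlyeq v \in W_b$, an intermediate base node $v' \in W_b$ with $v' \eqp w$—will be handled by a parallel Lindenbaum promotion in which $\mont(\parr)$ certifies that the promoted $v'$ remains $\ARN$-saturated without altering its parameter content.

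Goodness will follow by a similar extension argument: given a finite $W' \subseteq W$ realising a parameter set $X \subseteq \parr$, one adjoins $\bigwedge X$ and $\{\neg p : p \in \parr \setminus X\}$ to a suitable predecessor theory and extends to an $\ARN$-saturated set via \cref{Lem-Max-Sat}; here $\mont(\parr)$ again ensures compatibility with the upward reach to $W'$. The main obstacle I anticipate is coordinating the parameter valuation at base nodes with the tight-predecessor construction at those same nodes, since parameters cannot be substituted and must therefore be pinned down explicitly along the construction; this is exactly the technical reason why $\VAR$ treats antecedents in $\parr \cup \{\bot\}$ as inert via the $\itp{B}{E_i}$ shorthand, and fitting this interplay cleanly into the canonical-model argument will be the delicate step.
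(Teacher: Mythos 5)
Your overall architecture is the paper's: a canonical model of $\IPC$-saturated theories above an $\ARN$-saturated root supplied by \cref{Lem-ARN-saturation}, the base-set $W_b$ taken to be the $\ARN$-saturated theories, a Henkin truth lemma, tight predecessors produced by \cref{Lem-Max-Sat} applied to a hand-picked extendible property whose key closure step is certified by $\VAR$, the third base-set clause by a Lindenbaum promotion using $\mont(\parr)$, and goodness by one further saturation argument. So the plan is the right one.

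There is, however, a concrete step that fails as you describe it, and it occurs twice. For the tight-predecessor construction you propose to adjoin to the base node ``the negations of the appropriate implication antecedents witnessing the failure at each $w_j$'', and for goodness you propose to adjoin $\{\neg p : p\in\parr\setminus X\}$. In both cases the theory you are building must end up \emph{below} the given finite family, i.e.\ contained in $\what=\bigcap_j w_j$; but an antecedent $E$ that fails at one $w_j$ (hence $E\notin\what$) may well belong to another $w_{j'}$, and a parameter $p\notin X$ may belong to some or even all of the $w_j$, since $X$ is only required to be \emph{some} subset of the parameters forced on $W'$. Asserting $\neg E$ or $\neg p$ in the predecessor then forces $\neg E$ (resp.\ $\neg p$) into every superset, so $w'\subseteq w_{j'}$ becomes impossible whenever $w_{j'}$ contains $E$ (resp.\ $p$). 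The paper instead adds, for tight predecessors, the set $\Delta$ of \emph{implications} $E\to F\in\what$ whose antecedents lie outside $\what$ (or are parameters outside $w$), and in both constructions works with a property $*(y)$ that merely forbids $y$ from \emph{proving} a disjunction $\bigvee_iA_i\vee\bigvee_ip_i$ unless some $A_i\in\what$ or some $p_i$ is in the prescribed parameter set: non-provability rather than refutation is what keeps the saturated extension inside $\what$ while still yielding tightness via maximality. Note also that the goodness step is closed off not by $\mont(\parr)$ but by \cref{Cor-ARN-implies-IPC-deriv} together with the primality of $\bigwedge X\in\NNILpar$ \uparan{\cref{Remark-extendible-disjun}}. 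With these corrections your outline coincides with the paper's proof.
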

\begin{proof}
As usual, we reason contrapositively. Let 
$\ARN\nvdash A\rhd B$. 
Define the Kripke model $\kcal=(W,\pce,\V)$ as follows.
Since $\ARN\nvdash A\rhd B$, by \Cref{Lem-ARN-saturation}
there is some $\ARN$-saturated set $w_0$ such that $A\in w_0$ and 
$B\not\in w_0$.  Then define

\begin{equation*}
W:=\{w\supseteq w_0: w\text{ is a 
$\IPC$-saturated set of formulas}\}.
\end{equation*}
Also define $u\pce v$ iff $u\subseteq v$. Finally define 
$w\V a$ iff $a\in w$ for atomic $a$. We will show that this model is a good $\ARN$-model 
such that $\kcal\Vdash A$
and $\kcal\nVdash B$.
First note that by a standard argument, 
one may easily prove by induction on the complexity 
of $A$ that  $A\in w$ iff $\kcal,w\Vdash A$. 
Then since $A\in w_0$ and $B\not\in w_0$, we have 
$\kcal\Vdash A$ and $\kcal\nVdash B$. 
So it remains to show that $\kcal$ is 
a good  $\ARN$-model. Let $\basisworlds{W}$ as follows:

\begin{equation*}
\basisworlds{W}:=\{w\in W: w \text{ is $\ARN$-saturated}\}.
\end{equation*}
We will show that $\basisworlds{W}$ is a base-set, i.e.~has 
the following properties:
\begin{itemize}
\item $w_0\in \basisworlds{W}$,
\item every $w\in \basisworlds{W}$ is a base,
\item for every $w'\in \basisworlds{W}$ and $w\succcurlyeq w'$, 
there is some $v\in \basisworlds{W}$ such that $v\eqp w$ and 
$w'\pce v\pce w$.
\end{itemize}
The first property is obvious. For the second property 
we will need $\VAR$ and for the third one we will use 
\mont's rule.\\
Let $w\in \basisworlds{W}$ and $w\pce\{ w_1,\ldots,w_n\}$. We find 
some tight predecessor $u$ such that $u\eqp w$ and 
$w\pce u\pce \{w_1,\ldots,w_n\}$. 
Let $\what:=\bigcap_i w_i$ and define

\begin{equation*}
\Delta:=\{  
E\to F: E\to F\in \hat{w}\text{ and }
(E\not\in \hat{w}\vee E\in\parr\setminus w)
 \}.
 \end{equation*}
Define the property $*(.)$ as follows:

\begin{equation*}
*(y): y\vdash   \bigvee_i A_i \vee\bigvee_i p_i
\text{ and $\fa i \ (p_i\in \parr)$ implies }\ex i\
  (A_i\in \what) \vee \ex i\ (p_i\in w).
\end{equation*}
Note that by letting the second disjunction as empty, 
from $*(y) $ we have 
$y\vdash   \bigvee_i A_i$ 
 implies $\ex i\ (A_i\in \what)$. Similarly and by considering the first disjunction as empty disjunction, 
from $*(y) $ we get
$y\vdash   \bigvee_i p_i$ 
 implies $\ex i\ (p_i\in w)$. 
It is not difficult to observe that $*(.)$ is an 
extendible property. 
Then we show  $*(w\cup\Delta)$. 
Let $w\cup\Delta\vdash   \bigvee_i C_i\vee 
\bigvee_i p_i$ and $p_i\in \parr$. 
Then 
$w\vdash   G \to (\bigvee_i C_i\vee 
\bigvee_i p_i)$ in which 
$G=\bigwedge_i(E_i\to F_i)$ and $E_i\to F_i\in \Delta$.  
Since $w\in \basisworlds{W}$, and 

\begin{equation*}
\ARN\vdash \left(G\to (\bigvee_i C_i\vee 
\bigvee_i p_i)\right)\rhd \left(
\bigvee_i \itp{G}{E_i}\vee
\bigvee_i\itp{G}{C_i}\vee\bigvee_i\itp{G}{p_i}\right)
,
\end{equation*}
we have $w\vdash   \bigvee_i \itp{G}{E_i}\vee
\bigvee_i\itp{G}{C_i}\vee\bigvee_i\itp{G}{p_i}$. 
Since $w$ is $\IPC$-saturated,
either $w\vdash  \itp{G}{E_i}$  or 
$w\vdash  \itp{G}{C_i}$ or 
$w\vdash  \itp{G}{p_i}$, for some $i$. If 
$w\vdash   \itp{G}{E_i}$, then 
$w,\Delta\vdash   E_i$
and since $w\cup\Delta\subseteq \what$, we have 
$E_i\in \what$, a 
contradiction. So either we have 
$w\vdash  \itp{G}{C_i}$ or 
$w\vdash  \itp{G}{p_i}$, in 
which we have $w,\Delta\vdash C_i$ or $w\vdash p_i$. 
Hence either $C_i\in \what$ or $p_i\in w$.
This finishes showing $*(w\cup\Delta)$.
\\
Now let $u\supseteq (w\cup \Delta)$ be a maximal 
$\IPC$-saturated set such that $*(u)$, as provided by 
\Cref{Lem-Max-Sat}.
Then we show that $u$ satisfies all required conditions:
\begin{itemize}
\item $w\pce u\pce \{w_1,\ldots, w_n\}$.  Since 
$w\subseteq u$, we have $w\pce u$. Also from
$*(u)$, we get $u\subseteq\what$ and hence for every $i$
 we have $u\pce w_i$. 
\item $w\eqp u$. 
Since $w\subseteq u$, we have 
$w\cap\parr\subseteq u\cap\parr$. For the other direction, let $p\in \parr\cap u$. Then 
$u\vdash p$ and from $*(u)$ we have $p\in w$. 
\item $u$ is a tight predecessor of $\{w_1,\ldots,w_n\}$.
We reason by contraposition. 
Let $v\supsetneqq u$ such that for every $i$, 
$w_i\not\subseteq v$. 
Then for every $i$ there is some 
$C_i\in  w_i\setminus v$ and hence 
$\bigvee C_i\in \what\setminus v$.
 On the other hand, 
since $u$ is a maximal saturated set with
$*(u)$ and $v\supsetneqq u$ and $v$ is $\IPC$-saturated,
we have $\neg *(v)$. Hence 
$v\vdash \bigvee_i A_i\vee \bigvee_ip_i$ and 
for every  $i$ we have $p_i\in\parr$ and  $A_i\not\in \what$ and $p_i\not\in w$. From
$v\vdash \bigvee_i A_i\vee \bigvee_ip_i$, 
there is  some $E$ such that
either  we have $E\in v\setminus \what$ or 
 $E\in v\cap(\parr\setminus  w)$. In 
either of the cases, by definition of $\Delta$
we have $E\to \bigvee C_i\in\Delta$. Hence
$E\to \bigvee C_i\in v$ and 
then $\bigvee C_i\in v$, a contradiction.
\end{itemize}
It finishes showing the second property 
of base-set $\basisworlds{W}$. Next we show that $\basisworlds{W}$ satisfies 
the third condition. 
Let $w'\in \basisworlds{W}$ and $w'\pce w$. 
Define the property $*(.)$ as follows.

\begin{equation*}
*(y): \text{ for every $C$, if }
\ARN\vdash y\rhd C \text{, then } C\in w.
\end{equation*}
We show that $*(.)$ is an $\ARN$-extendible property and 
$*(w'\cup w_\parr)$, in which 
$w_\parr:=w\cap\parr$. First let us show why 
this finishes the proof. From \Cref{Lem-Max-Sat} 
we get some $\ARN$-saturated 
$v\supseteq (w'\cup w_\parr)$ such that $*(v)$. 
Hence by definition $v\in \basisworlds{W}$. 
Since $v\supseteq w'$, we have $w'\pce v$. 
Then we show $v\pce w$. Let $C\in v$. From $*(v)$
and $\ARN\vdash v\rhd C$, we have $C\in w$, as desired. 
So we have  $v\pce w$.  Finally we show $v\eqp w$. 
We must show $v_\parr=w_\parr$, which holds 
because $v\supseteq w_\parr$ and $v\subseteq w$. 
\\
So it remains to show that $*(.)$ is an $\ARN$-extendible 
property and $*(w'\cup w_\parr)$. First we show that 
$*(.)$ satisfies all required conditions for 
 $\ARN$-extendibility:
\begin{itemize}
\item If $*(y)$ and $y\vdash   E$. We must show 
$*(y\cup\{E\})$. Let $\ARN\vdash y\cup\{E\}\rhd C$. 
Hence $\ARN\vdash E\wedge \bigwedge_i F_i\rhd C$ for some 
finite  set $\{F_i\}_i\subseteq y$. Then since 
$y\vdash   E $, we have $\ARN\vdash y\rhd 
E\wedge \bigwedge_i F_i$. Hence   $\ARN\vdash y\rhd C$. 
Then from $*(y)$ we have $C\in w$, as desired.
\item If neither 
$*(y\cup\{E\})$ nor $*(y\cup\{F\})$ hold, then 
we show that $*(y\cup\{E\vee F\})$ does not hold. 
Let $C,D$ such that $\ARN\vdash y\cup\{E\}\rhd C$ and  
$\ARN\vdash y\cup\{F\}\rhd D$ and $C\not\in w$ and 
$D\not\in w$. Hence by disjunction rule, 
we have $\ARN\vdash y\cup\{E\vee F\}\rhd C\vee D$. Since 
$w$ is $\IPC$-saturated, we also have $C\vee D\not\in w$. 
Hence $*(y\cup\{E\vee F\})$ does not hold.
\item Let $*(y)$ and $\ARN\vdash y\rhd E$. We 
must show that 	$*(y\cup\{E\})$. Let 
$\ARN\vdash y\cup\{E\}\rhd C$. Then from $\ARN\vdash y\rhd E$ we have 
$\ARN\vdash y\rhd C$. Then from $*(y)$  we  have 
$C\in w$.
\end{itemize}
Finally we  show that $*(w'\cup w_\parr)$. Let 
$\ARN\vdash w'\cup w_\parr\rhd C$. 
Hence $\ARN\vdash \bigwedge w_\parr\wedge E\rhd C$, for some $E\in w'$. 
Then  by 
Montagna's Rule we have $\ARN\vdash 
\bigwedge w_\parr\to E\rhd  \bigwedge w_\parr\to C$.
Since $E\in w'$, we have $\bigwedge w_\parr\to E\in w'$
and hence by $\ARN$-saturatedness of $w'$ we have 
$\bigwedge w_\parr\to C\in w'$. Since $w'\subseteq w$,
we have $\bigwedge w_\parr\to C\in w$ and hence
$C\in w$.
\\
It only remains to show that $\kcal$ is good. Let 
$w_1,\ldots,w_n\in W$ and $\what:=\bigcap_iw_i$.
Also assume that  $X\subseteq \what\cap\parr$. 
We find some 
$w\in \basisworlds{W}$ such that 
$ w\subseteq \what$ and $w\cap\parr={X}$.
Define 

\begin{equation*}
*(y): \text{For every $C_i$ and $p_i\in\parr$, if }
\ARN\vdash y\rhd \bigvee_i C_i\vee\bigvee_i p_i \text{, then }
\ex i\  C_i\in \what \vee \ex i\ p_i\in {X}.
\end{equation*}
We show that $*(.)$ is an $\ARN$-extendible 
property and $*({X})$. Then by 
\Cref{Lem-Max-Sat} we have some $\ARN$-saturated $w$
such that ${X}\subseteq w$ and $*(w)$ holds. 
From $*(w)$ it is clear that $w\subseteq \what$. 
Also if $p\in \parr\cap w$, then by $*(w)$ we have 
$p\in {X}$ and hence $w\cap\parr={X}$. 
Hence  $w$ satisfies all required conditions. 
It remains only to show that $*(.)$ is 
$\ARN$-extendible property and $*({X})$. 
First the $\ARN$-extendibility of $*(.)$: 
\begin{itemize}
\item If $*(y)$ and $y\vdash   E$. We must show 
$*(y\cup\{E\})$. Let $C=\bigvee_i C_i\bigvee_i p_i$ and 
$p_i\in\parr$ and 
$\ARN\vdash y\cup\{E\}\rhd C$. 
Hence $\ARN\vdash E\wedge \bigwedge_i F_i\rhd C$ for some 
finite  set $\{F_i\}_i\subseteq y$. Then since 
$y\vdash   E $, we have $\ARN\vdash y\rhd 
E\wedge \bigwedge_i F_i$. Hence   $\ARN\vdash y\rhd C$. 
Then from $*(y)$ we have $C_i\in \what$
or $p_i\in {X}$, for some $i$.
\item If neither 
$*(y\cup\{E\})$ nor $*(y\cup\{F\})$ hold, then 
we show that $*(y\cup\{E\vee F\})$ does not hold. 
Let $C=\bigvee_i C_i\vee\bigvee_ip_i$ and 
$D=\bigvee_iD_i\vee\bigvee_iq_i$ and $p_i,q_i\in\parr$
such that $\ARN\vdash y\cup\{E\}\rhd C$ and  
$\ARN\vdash y\cup\{F\}\rhd D$ and for all $i$ we have 
$C_i,D_i\not\in \what$ and 
$p_i,q_i\not\in {X}$. 
Hence by disjunction rule, 
we have $\ARN\vdash y\cup\{E\vee F\}\rhd C\vee D$,
while for all $i $,  $C_i,D_i\not\in \what$ and $p_i,q_i\not\in {X}$.  
Hence $*(y\cup\{E\vee F\})$ does not hold.
\item Let $*(y)$ and $\ARN\vdash y\rhd E$. We 
must show that 	$*(y\cup\{E\})$. Let 
$C=\bigvee_i C_i\vee\bigvee_ip_i$ and $p_i\in \parr$
and 
$\ARN\vdash y\cup\{E\}\rhd C$. 
Then from $\ARN\vdash y\rhd E$ we have 
$\ARN\vdash y\rhd C$. Then from $*(y)$  we  have 
$C_i\in \what$ or $p_i\in {X}$
for some $i$.
\end{itemize}
It finishes showing that $*(.)$ is an $\ARN$-extendible 
property. Then we show $*({X})$. Let 
$C=\bigvee_iC_i\vee\bigvee_ip_i$
and $p_i\in\parr$ and 
$\ARN\vdash \parr\cap w\rhd C$. 
Then by \Cref{Cor-ARN-implies-IPC-deriv} 
we have ${X}\vdash   C$.  
Since $\bigwedge ({X})$ is extendible,
by \Cref{Remark-extendible-disjun}
for some $i$ we have 
${X}\vdash   C_i$ or 
${X}\vdash   p_i$. 
Since ${X}\subseteq\what$, for some $i$
either we have $C_i\in\what$ or $p_i\in{X}$, as desired.
\end{proof}
\subsection{$\altNNILpar$-admissibility}

\begin{lemma}\label{Lem-ARN-models-arn}
For every good $\ARN$-model $\kcal$ and $n\in\mathbb{N}$,
 there is some 
$\parr$-subextendible stable class of finite rooted models 
$\scrk$ such that for every 
formula $A$ with $c(A)\leq n$ we have 
$\kcal\Vdash A$ iff $\scrk\Vdash A$. 
\end{lemma}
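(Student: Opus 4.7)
The plan is to build $\scrk$ from finite rooted tree unravelings of the submodels $\kcal_w$ at base nodes $w \in W_b$, truncated to depth $n$, and then take the stable closure.

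First, I would inductively construct for each $w \in W_b$ a finite rooted tree $T_w$ with $T_w \sim_n \kcal_w$. At each level of the construction, the children of a node corresponding to some $u \in W_b$ are chosen as copies of (already built) $T_v$ with $v \in W_b$ ranging over representatives of the $\sim_k$-classes of $\{\kcal_{u'} : u' \succ u\}$ for an appropriate $k$. The third base-set condition guarantees that every required $\sim_k$-class is realised by some $v \in W_b$ above $u$, and the finite index of $\sim_k$ (Remark \ref{Remark-Finiteness-c(A)}) keeps $T_w$ finite. Let $\scrk$ be the stable closure of $\{T_w : w \in W_b\}$; by construction each element of $\scrk$ is itself of the form $T_v$ for some $v \in W_b$ (at smaller depth), so stability is immediate.

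The semantic equivalence $\kcal \Vdash A \iff \scrk \Vdash A$ for $\cto(A) \leq n$ is handled by \cref{Cor-Kripke-bisim-property}: since $T_{w_0} \in \scrk$ and $T_{w_0} \sim_n \kcal$, the direction $\scrk \Vdash A \Rightarrow \kcal \Vdash A$ is immediate. For the converse, I would combine the $\sim_k$-equivalence of every $T \in \scrk$ with a corresponding $\kcal_v$ together with persistence of intuitionistic satisfaction, which lets the global validity of $A$ in $\kcal$ transfer to $T$ even when $T$'s depth is smaller than $\cto(A)$ (truncated trees validate more implications, not fewer).

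The main obstacle is verifying $\parr$-extendibility of $\scrk$. Given a finite stable $\scrk' \subseteq \scrk$ and $T \in \scrk$ with $\scrk' \submodeli\parr T$, I must exhibit a $\parr$-variant of $\sum(\scrk', T)$ inside $\scrk$. Writing $T$ as $T_w$ and the elements of $\scrk'$ as $T_{v_i}$ with $w, v_i \in W_b$, the $\submodeli\parr$-embeddings together with the underlying $\sim$-equivalences lift to a finite family of nodes above $w$ in $\kcal$ whose $\parr$-valuations match those prescribed by $\scrk'$. Invoking the base property of $w$ delivers a tight predecessor $w^\ast$ of this lifted family with $w^\ast \eqp w$; the third base-set condition upgrades $w^\ast$ to some $v^\ast \in W_b$ with $w \pce v^\ast \pce w^\ast$; and goodness of $\kcal$ is what guarantees that the $\parr$-valuation realised at $v^\ast$ is exactly that of the intended root of the extension. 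The finite tree rooted at $v^\ast$ with the $T_{v_i}$'s plugged in as children of the root is then the required $\parr$-variant of $\sum(\scrk', T)$. The difficulty lies precisely in simultaneously matching multiple $\parr$-types across the lifted family — this is what both the base property and the goodness condition are designed to make possible.
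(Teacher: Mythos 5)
There is a genuine gap, and it traces back to your choice of $\scrk$. You build $\scrk$ from a \emph{specific} family of depth-$n$ unravelings $T_w$ indexed by base nodes, whereas the paper takes $\scrk$ to be the class of \emph{all} finite rooted tree models that embed into $\kcal$ by an order- and atom-preserving map $f$ such that every node agrees with its image on all formulas of complexity $\leq n$. Two concrete failures in your version. First, your inductive construction of $T_w$ needs, for each $\sim_k$-class of $\{\kcal_{u'}:u'\succ u\}$, a representative in $W_b$; you claim the third base-set condition supplies this, but that condition only yields a base node $v$ with $w'\pce v\pce u'$ and $v\eqp u'$, i.e.\ agreeing with $u'$ \emph{on parameters}. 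It says nothing about $\kcal_v\sim_k\kcal_{u'}$ (indeed $\kcal_{u'}$ is a proper generated submodel of $\kcal_v$ in general), so the required representatives need not exist among base nodes and $T_w\sim_n\kcal_w$ is not obtained. Second, even granting the $T_w$, the stable closure only gives $(T_w)_t\sim_{n-d}\kcal_u$ at depth $d$, which is too weak to transfer formulas of complexity up to $n$; your repair --- ``truncated trees validate more implications, not fewer'' --- is false: for the two-node chain with $p$ true only at the top, the one-node truncation validates $\neg p$ and refutes $\neg\neg p$, while the full chain validates $\neg\neg p$. The paper avoids both problems by demanding pointwise $n$-agreement along the embedding (inherited by generated submodels, so stability costs nothing) and by constructing the finite approximant at an \emph{arbitrary} node of $\kcal$ out of sequences of refuted implications, not out of base nodes.

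The extendibility argument inherits these troubles. Because your $\scrk$ is a fixed family plus its generated submodels, the model obtained by plugging the $T_{v_i}$ under a fresh root need not be (a generated submodel of) any $T_u$, so its membership in $\scrk$ is unclear; with the paper's ``maximal'' $\scrk$ it is immediate, since one extends the given embeddings $f_i$ by sending the new root to the tight predecessor. Moreover, your claim that the $\submodeli\parr$-embeddings of the $T_{v_i}$ into $T_w$ ``lift to a finite family of nodes above $w$ in $\kcal$'' is unjustified: the nodes $v_i$ lie wherever they lie in $\kcal$ and need not be above $w$. This is exactly why the paper invokes goodness \emph{first}, to produce a base node $u$ below all the images $f_i(w_i')$ with the $\parr$-type of the root of the ambient model, and only then applies the base property of $u$ to obtain a tight predecessor with the same $\parr$-type; your order (base property of $w$, then the third base-set condition, then goodness) does not deliver the needed node.
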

\begin{proof}
Given a good $\ARN$-model $\kcal=(W,\pce,\V)$ with 
$\basisworlds{W}\subseteq W$ as its base-set, we define a stable 
$\parr$-subextendible class $\scrk$ of finite 
rooted Kripke models as 
follows. $\scrk$ includes all Kripke models
$\kcal'=(W',\pce',\V')$ with the following properties:
\begin{itemize}
\item $\kcal'$ is finite rooted with tree frame.
\item $\kcal'$ is embeddable in $\kcal$, 
i.e.~there is a function $f:W'\longrightarrow W$ such 
that $w'\V' a$ iff $f(w')\V a$; and 
$w'\pce' v'$ implies $f(w')\pce f(v')$.
\item For all $A$ with $c(A)\leq n$ and for every 
$w'\in W'$  we have $\kcal',w'\Vdash A$
iff $\kcal,f(w')\Vdash A$.
\end{itemize}
Obviously $\scrk$ is stable and $\kcal\Vdash A$ implies 
$\scrk\Vdash A$ for every $A$ with $c(A)\leq n$. 
It remains to show:
\begin{enumerate}[leftmargin=*]
\item $\scrk\Vdash A$ implies $\kcal\Vdash A$ for every 
$A$ with $c(A)\leq n$. It is sufficient to show that 
for a given $n$ and  $w_0\in W$, there is a 
finite rooted (with the root $w'_0$) tree-frame Kripke 
model $\kcal'=(W',\pce',\V')$ which is embeddable in 
$\kcal$ with the embedding $f$ such that
$f(w'_0)=w_0$ and  for every 
$w'\in W'$ and $A$ with $c(A)\leq n$ we have 
$\kcal',w'\Vdash A$ iff $\kcal,w\Vdash A$. 
First we inductively 
define sets  $W_i$ of sequences of implications 
$B\to C$ with $c(B\to C)\leq n$,
 for $0\leq i\leq n$  and the function
$f$ from $W_i$ to $W$.
Then let $W':=\bigcup_{i=0}^n W_i$.
 Let $W_0:=\{\langle \rangle\}$  and 
 $f(\langle\rangle):=w_0$. Assume that we already 
defined $W_i$ and define $W_{i+1}$ as follows. For 
every sequence $\sigma\in W_i$ and 
implication $B\to C$ with $c(B\to C)\leq n$ such that 
$\kcal,f(\sigma)\nVdash B\vee (B\to C)$, add the new node 
$\sigma*\langle B\to C\rangle$ to $W_{i+1}$ and 
define $f(\sigma*\langle B\to C\rangle)=u$
for some $u$ such that 
$u\sce f(\sigma)$ and $\kcal,u\Vdash B$
and $\kcal,u\nVdash C$.  
This finishes definition of $W_i$ and $W'$ and the 
embedding $f:W'\longrightarrow W$. Finally define 
$\sigma\pce'\gamma$ iff $\sigma$ is an initial segment of 
$\gamma$. Since there are only finitely many 
inequivalent formulas $A$ with $c(A)\leq n$, 
one may easily observe that $\kcal'$ is finite. The other 
required properties for $\kcal'$ are easy and left to the 
reader. 
\item $\scrk$ is $\parr$-subextendible. 
Let $\scrk':=\{\kcal_{1},\ldots,\kcal_{n}\}
\subseteq \scrk$ be finite such that 
$\scrk'$ is a $\pbold$-submodel of some 
$\kcal_{0}\in \scrk$ and $w'_i$ be 
the root of $\kcal_i$.
Let $f_i$ be the embedding of $\kcal_i$ in $\kcal$ 
and $w_i:=f_i(w'_i)$.
Since $\kcal$ is good, there is some $u\in W$ such that 
$u\eqp w_0$ and $u\pce w_1,\ldots,w_n$ and $u\in \basisworlds{W}$. 
Since $u$ is a base, there is some tight predecessor 
$v\in W$ for the set $\{w_1,\ldots,w_n\}$ such 
that $u\eqp v$ and $u\pce v\pce w_1,\ldots,w_n$. 
Define a $\pbold$-variant $\kcal''$
of $\kcal':=\sum(\scrk',\kcal_0)$ in this way:
$\kcal'',w'_0\Vdash a$ iff $\kcal,v\Vdash a$, for every atomic $a$. Then it is not difficult to observe that 
$\kcal''\in\scrk$.
\end{enumerate}
\end{proof}
\begin{lemma}\label{Lem-arn-p-extendible}
If $A\altarn B$ and $\scrk\subseteq\Mod{A}$ is   $\parr$-subextendible 
and stable, then $\scrk\Vdash B$.
\end{lemma}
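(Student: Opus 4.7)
The plan is to reduce to the projective case. For each $\kcal\in\scrk$ I construct a $\NNILpar$-projective formula $A^\kcal$ with $\kcal\Vdash A^\kcal$ and $\vdash A^\kcal\to A$, and then apply $A\arn B$ via the $A^\kcal$-projective substitution supplied by \cref{Theorem-Ghil-Ext}.

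Granting the construction of $A^\kcal$, the conclusion is routine. By \cref{Theorem-Ghil-Ext} there are an $A^\kcal$-projective substitution $\theta$ and a formula $(A^\kcal)^\dagger\in\NNILpar$ with $\vdash\theta(A^\kcal)\lr(A^\kcal)^\dagger$. Since $\theta$ fixes $\parr$, $\theta((A^\kcal)^\dagger)=(A^\kcal)^\dagger$; together with $\vdash(A^\kcal)^\dagger\to\theta(A^\kcal)$ and $\vdash A^\kcal\to A$ this gives $\vdash\theta((A^\kcal)^\dagger\to A)$. Applying $A\arn B$ with $C:=(A^\kcal)^\dagger\in\NNILpar$ yields $\vdash\theta((A^\kcal)^\dagger\to B)$, i.e.\ $\vdash(A^\kcal)^\dagger\to\theta(B)$. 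From $\kcal\Vdash A^\kcal$ we obtain $\kcal\Vdash(A^\kcal)^\dagger$ by \cref{Gamma-projectivity-pres}, hence $\kcal\Vdash\theta(B)$. By persistence $A^\kcal$ holds at every node of $\kcal$, so $A^\kcal$-projectivity of $\theta$ gives $\kcal\Vdash a\lr\theta(a)$ at every node, for every atomic $a$; a routine induction on formula complexity then yields $\kcal\Vdash B\lr\theta(B)$, whence $\kcal\Vdash B$.

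The hard part is the construction of $A^\kcal$, which I would carry out by mimicking the proof of \cref{Theorem-IPC-nnilp-Finitary}. Set $A':=\bigwedge\tbnpr{\kcal}$, the $\NNILpar$-theory of $\kcal$, a single formula up to $\IPC$-equivalence by \cref{Remark-NNIL-finiteness}; then $A'\in\NNILpar$, $\kcal\Vdash A'$, and by \cref{Theorem-NNIL-Submodel2} $\Mod{A'}$ is exactly the class of $\parr$-submodels of $\kcal$. The central task is to build a substitution $\tau$ such that (i) $\tau$ maps every finite rooted $\parr$-submodel of $\kcal$ into $\scrk$, and (ii) $\kcal\in\tau(\Mod{A'})$; both conditions rely essentially on the $\parr$-extendibility of $\scrk$ for the patching of the variable valuations at the fresh roots supplied by the definition of $\parr$-extendibility. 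Granting such $\tau$, the class $\tau(\Mod{A'})$ is stable and $\parr$-extendible by \cref{Lem-extendible}, so for $n:=\max\{\cto(A),1+\#\parr\}$ its $\le_n$-closure $\langle\tau(\Mod{A'})\rangle_n$ is $\parr$-extendible by \cref{Lem-stable-extendible} and equals $\Mod{A^\kcal}$ for some $A^\kcal$ with $\cto(A^\kcal)\le n$ by \cref{Lem-Mod(A)-Kripke}. \Cref{Theorem-Ghil-Ext} then yields $\NNILpar$-projectivity of $A^\kcal$; $\vdash A^\kcal\to A$ follows from $\tau(\Mod{A'})\subseteq\scrk\subseteq\Mod A$ together with \cref{Cor-Kripke-bisim-property}; and $\kcal\Vdash A^\kcal$ by (ii). The delicate core throughout is realising (i) and (ii) simultaneously by a single substitution $\tau$.
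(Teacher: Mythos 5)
The reduction in your first paragraph is sound and coincides with the endgame of the paper's own argument: once a $\NNILpar$-projective $A^\kcal$ with $\kcal\Vdash A^\kcal$ and $\vdash A^\kcal\to A$ is in hand, pushing $A\arn B$ through the projective substitution and using persistence to conclude $\kcal\Vdash B\lr\theta(B)$ is correct.

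The gap is exactly where you flag it, and it is not a routine patch: the substitution $\tau$ is never constructed, and the route you sketch for it does not work. The $\parr$-extendibility of $\scrk$ is a purely semantic closure property (for finite $\scrk'\submodeli\parr\kcal\in\scrk$, some $\parr$-variant of $\sum(\scrk',\kcal)$ lies in $\scrk$); converting this into a \emph{single syntactic substitution} that uniformly sends every $\parr$-submodel of $\kcal$ into $\scrk$ (or even just into $\Mod{A}$, which is all you really need for $\vdash A^\kcal\to A$) is precisely the content of direction $3\to 1$ of \cref{Theorem-Ghil-Ext}, and that theorem only applies to classes of the form $\Mod{C}$: the substitution $\theta$ there is built from the formula $C$ itself, and an abstract class $\scrk$ gives you nothing to build it from. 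The only way I see to produce your $\tau$ is to first restrict $\scrk$ to the finitely many atoms occurring in $A,B,\parr$, take its $\leq_n$-closure, write that closure as $\Mod{C}$ via \cref{Lem-Mod(A)-Kripke} (using \cref{Lem-stable-extendible} for its $\parr$-extendibility), and then let $\tau$ be the $C$-projective substitution from \cref{Theorem-Ghil-Ext}. But at that point $C$ already does everything $A^\kcal$ was meant to do --- it is $\NNILpar$-projective, $\vdash C\to A$ by \cref{Cor-Kripke-bisim-property}, and every $\kcal$ in the restricted class satisfies $C$ --- so the per-model detour through $A'=\bigwedge\tbnpr{\kcal}$ and $\tau(\Mod{A'})$ becomes superfluous, and you have reproduced the paper's proof. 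Note also that you invoke \cref{Lem-stable-extendible,Lem-Mod(A)-Kripke} in the wrong order: they must be applied to (the restriction of) $\scrk$ itself \emph{before} \cref{Theorem-Ghil-Ext} can be used, not to the class $\tau(\Mod{A'})$ whose very definition presupposes that theorem.
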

\begin{proof}
Let $A\altarn B$ and $\scrk$ is a stable class of 
finite rooted models with tree frames. 
Let $\scrk'$ be the restriction of $\scrk$ to the atoms
appeared in $A,B,\parr$. Obviously $\scrk'\subseteq\Mod A$ also
is a $\parr$-subextendible
stable class.
Let $n:=\max\{c(A),\#\parr\}$. Then 
 \Cref{Lem-stable-extendible} implies that 
 $\langle \scrk'\rangle_n$ is also a 
$\parr$-subextendible stable class of finite rooted models 
with tree frames. \Cref{Lem-Mod(A)-Kripke} implies 
$\langle \scrk'\rangle_n=\Mod{C}$ for some 
$C$ with $c(C)\leq n$. Moreover, by 
\Cref{Theorem-Ghil-Ext}, there is a substitution
$\theta$ and $C'\in\NNILpar$ such that 
$  \vdash C'\lr \theta(C)$ and $C\vdash   E\lr \theta(E)$ for every formula $E$. 
On the other hand, \Cref{Cor-Kripke-bisim-property}
implies $\langle\scrk'\rangle_n\Vdash A$. Hence $A$ is 
valid in $\Mod{C}$, which implies $  \vdash C\to A$. 
Hence $  \vdash \theta(C)\to \theta(A)$ and then 
$  \vdash C'\to \theta(A)$. From $A\altarn B$ infer
$  \vdash C'\to \theta(B)$, or equivalently 
$  \vdash \theta(C\to B)$. Hence for every $\kcal$, 
and of course for every   
$\kcal\in \langle\scrk'\rangle_n$ we have $\kcal\Vdash 
\theta(C\to B)$. Let $\kcal$ be a model in $\langle\scrk'\rangle_n$.
Since $\kcal\Vdash C$ and $\theta$ is 
$C$-identity, we have $\kcal\Vdash C\to B$, and hence 
$\kcal\Vdash B$. Thus we have $\langle\scrk'\rangle_n\Vdash B$.
Since 
$\scrk'\subseteq  \langle\scrk'\rangle_n$, we 
also have $\scrk'\Vdash B$. Whence $\scrk\Vdash B$, as 
desired.
\end{proof}

\begin{theorem}\label{Characterization-admissibility}
The following statements are equivalent:
\begin{enumerate}
\item $\ARN\vdash A\rhd B$.
\item $A\altarn B$.
\item $B$ is valid in every $\parr$-subextendible 
stable class of 
Kripke models of $A$.
\item $B$ is valid in every good $\ARN$-model of $A$.
\end{enumerate}
\end{theorem}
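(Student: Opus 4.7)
The plan is to close the loop $(1)\Rightarrow(2)\Rightarrow(3)\Rightarrow(4)\Rightarrow(1)$, using the four preceding ingredients in the obvious order. The implications $(1)\Rightarrow(2)$ and $(2)\Rightarrow(3)$ are essentially already packaged: the first is exactly \cref{Lem-ARN-implies-arpn}, and the second is exactly \cref{Lem-arn-p-extendible} (once one observes that the class of finite rooted tree models of $A$ is stable, so any $\parr$-extendible stable subclass of $\Mod{A}$ is the object fed into that lemma). So for these two steps it suffices to cite the corresponding lemmas.

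For $(3)\Rightarrow(4)$, I would use \cref{Lem-ARN-models-arn} as a bridge between the two semantics. Let $\kcal$ be a good $\ARN$-model with $\kcal\Vdash A$, and choose any integer $n$ with $n\geq\max\{\cto(A),\cto(B)\}$. By \cref{Lem-ARN-models-arn} there is a $\parr$-extendible stable class $\scrk$ of finite rooted tree-frame Kripke models such that $\kcal\Vdash C$ iff $\scrk\Vdash C$ for every $C$ with $\cto(C)\leq n$. Applying this equivalence to $A$ gives $\scrk\subseteq\Mod{A}$; applying hypothesis (3) to $\scrk$ yields $\scrk\Vdash B$; and applying the equivalence once more to $B$ yields $\kcal\Vdash B$, as required.

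For the closing step $(4)\Rightarrow(1)$ the plan is simply to invoke the contrapositive of the completeness theorem \cref{Theorem-ARN-Completeness}: if $\ARN\nvdash A\rhd B$ then that proof already constructs a good $\ARN$-model $\kcal$ with $\kcal\Vdash A$ and $\kcal\nVdash B$, witnessing the failure of (4). Equivalently, hypothesis (4) rules out any such counter-model, so $\ARN\vdash A\rhd B$.

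No step requires new ideas; the only mildly subtle point is the choice of $n$ in the $(3)\Rightarrow(4)$ step, where one has to make sure the bound is large enough to simultaneously transfer truth of $A$ and truth of $B$ between $\kcal$ and the extendible stable class produced by \cref{Lem-ARN-models-arn}. The real work has been done earlier: the soundness/completeness of $\ARN$ over good $\ARN$-models (\cref{Theorem-ARN-Soundness,Theorem-ARN-Completeness}), the bisimulation-style bound from \cref{Lem-stable-extendible,Lem-Mod(A)-Kripke}, and Ghilardi-style extendibility from \cref{Theorem-Ghil-Ext}; the present theorem is just the bookkeeping that stitches them together.
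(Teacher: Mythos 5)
Your proposal is correct and follows exactly the same route as the paper: the cycle $(1)\Rightarrow(2)\Rightarrow(3)\Rightarrow(4)\Rightarrow(1)$ is discharged by \cref{Lem-ARN-implies-arpn}, \cref{Lem-arn-p-extendible}, \cref{Lem-ARN-models-arn}, and \cref{Theorem-ARN-Completeness} respectively, which is precisely the paper's proof. Your explicit choice of $n\geq\max\{\cto(A),\cto(B)\}$ in the $(3)\Rightarrow(4)$ step correctly fills in the small detail the paper leaves implicit.
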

\begin{proof}
$1\to 2$: \Cref{Cor-ARN-implies-arn}.\\
$2\to 3$: \Cref{Lem-arn-p-extendible}.\\
$3\to 4$: \Cref{Lem-ARN-models-arn}.\\
$4\to 1$: \Cref{Theorem-ARN-Completeness}.
\end{proof}

\begin{corollary}
\label{montagna-nnil}
The following rule is admissible in $\ARN$:
\Ax{$A\rhd B$}
\LLa{\uparan{$E\in\NNILpar$}}
\UI{$E\to A\rhd E\to B$}
\DP.
\end{corollary}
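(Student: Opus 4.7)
The plan is to bypass any syntactic manipulation inside $\ARN$ by invoking \cref{Characterization-admissibility}, which gives the equivalence $\ARN\vdash C\rhd D \iff C\arn D$. It therefore suffices to show that the relation $\arn$ itself is closed under the rule, namely: if $A\arn B$ and $E\in\NNILpar$, then $E\to A \arn E\to B$.

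To carry this out, fix $E\in\NNILpar$ and assume $A\arn B$. Take an arbitrary substitution $\theta$ and any $G\in\NNILpar$ with $\vdash G\to \theta(E\to A)$; I want to deduce $\vdash G\to \theta(E\to B)$. The key observation is that by the standing convention of \cref{lang}, every substitution is the identity on $\parr$, and since $E\in\NNILpar\subseteq\lcalzpar$ is built only from parameters, we have $\theta(E)=E$. Thus the hypothesis rearranges to $\vdash (G\wedge E)\to \theta(A)$. Since $\NNIL$ is closed under conjunction by its inductive definition (\cref{sec-NNIL-def}) and both $G$ and $E$ lie in $\NNILpar$, the formula $G\wedge E$ is again in $\NNILpar$. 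Applying $A\arn B$ to the substitution $\theta$ and the witness $G\wedge E\in\NNILpar$ yields $\vdash (G\wedge E)\to \theta(B)$, i.e. $\vdash G\to (E\to \theta(B))=\vdash G\to \theta(E\to B)$, as required.

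There is essentially no obstacle in this route: once \cref{Characterization-admissibility} is available, the entire proof reduces to the two trivial facts that substitutions fix $\parr$ and that $\NNIL$ is closed under $\wedge$. The only point worth flagging is the crucial use of $\theta(E)=E$, which would fail if we dropped the restriction $E\in\NNILpar$ and allowed $E$ to contain atomic variables; this is exactly why the corollary has to stipulate $E\in\NNILpar$ rather than just $E\in\NNIL$. A purely syntactic derivation inside $\ARN$, by induction on the $\NNIL$-structure of $E$ using $\mont(\parr)$, \textbf{Conj}, \textbf{Disj}, and $\VAR$ to handle implications $p\to E'$, would also be feasible but considerably more laborious; the semantic detour through $\arn$ gives a one-paragraph proof.
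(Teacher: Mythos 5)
Your proposal is correct and follows the same route as the paper: the paper's own proof also invokes the equivalence $\ARN\vdash C\rhd D\iff C\arn D$ from \cref{Characterization-admissibility} and then appeals to closure of $\arn$ under the rule, which you verify in detail via $\theta(E)=E$ and closure of $\NNILpar$ under conjunction. Your write-up merely makes explicit the step the paper leaves implicit.
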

\begin{proof}
Since $\rhd={\altarn}\ \ $ and 
\Ax{\begin{tabular}{c}
{$A\altarn B$}
\end{tabular}
}
\UI{$E\to A\altarn E\to B$}
\DP, we have the desired result.
\end{proof}

\subsection{$\altdNNILpar$-preservativity logic}\label{sec-pres-1}

In the following theorem we show that the other direction of 
\Cref{pres-admis-rel} holds when $ \Gamma=\altNNILpar $:
 \begin{theorem}\label{arnp-character}
 		$ {\altprdnpar} = {\altarn}$ .
 \end{theorem}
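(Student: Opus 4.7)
The direction $A\arn B \Rightarrow A\prdnpar B$ is immediate from \cref{pres-admis-rel} applied with $\sft=\IPC$ and $\Gamma=\NNILpar$ (in which case $\dtgamma$ specializes to $\dNNILpar$). So only the converse direction requires work.

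For $A\prdnpar B \Rightarrow A\arn B$ my plan is to exploit the semantic equivalence (2)$\Leftrightarrow$(3) of \cref{Characterization-admissibility}: it suffices to show that every $\parr$-extendible stable class $\scrk$ of finite rooted Kripke models of $A$ satisfies $\scrk\Vdash B$. Fix such a $\scrk$ and choose $n>\max\{\cto(A),\#\parr\}$. By \cref{Lem-stable-extendible} the enlarged class $\scrkn$ is still $\parr$-extendible and stable; since it is $\leq_n$-downward closed, \cref{Lem-Mod(A)-Kripke} supplies a formula $E\in\lcalz$ with $\cto(E)\le n$ and $\scrkn=\Mod{E}$. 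Because $\Mod{E}$ inherits $\parr$-extendibility from $\scrkn$, \cref{Theorem-Ghil-Ext} delivers $E\in\dNNILpar$, which is exactly the type of premise needed to apply the preservativity hypothesis.

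To finish, I combine $\scrk\Vdash A$ with $\cto(A)\le n$ via \cref{Cor-Kripke-bisim-property} to propagate validity of $A$ throughout $\scrkn=\Mod{E}$, yielding $\vdash E\to A$. The preservativity hypothesis $A\prdnpar B$ then gives $\vdash E\to B$, and since $\scrk\subseteq\scrkn=\Mod{E}$ we obtain $\scrk\Vdash B$, as required.

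The main obstacle—or rather, the essential leverage—is the bridge provided by \cref{Theorem-Ghil-Ext} between semantic $\parr$-extendibility of $\Mod{E}$ and syntactic $\NNILpar$-projectivity of $E$, together with the closure property in \cref{Lem-stable-extendible} that lets me replace the arbitrary class $\scrk$ by the definable class $\scrkn$ without destroying $\parr$-extendibility. After these, the remaining steps are routine bookkeeping on the parameter $n$, which must simultaneously dominate $\cto(A)$ (to apply \cref{Cor-Kripke-bisim-property}) and $\#\parr$ (to apply \cref{Lem-stable-extendible}).
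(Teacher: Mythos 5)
Your proposal is correct, but it takes a genuinely different route from the paper. The paper's proof of the nontrivial direction is syntactic: it reduces $\arn$ to $\arpn$ via \cref{pnnilpv-nnilp}, takes the $\pNNILpar$-projective resolution $\Pi_A$ of $A$ supplied by \cref{Projec-resol}, uses $\IPC$-primality of the premise $E$ to pick a single disjunct $F\in\Pi_A$ with $\vdash E\to\theta(F)$, applies the preservativity hypothesis to $F$ (which lies in $\dNNILpar$ and satisfies $\vdash F\to A$), and pushes the conclusion back through $\theta$. You instead argue semantically: you establish clause (3) of \cref{Characterization-admissibility} directly from $A\prdnpar B$ by passing from an arbitrary $\parr$-extendible stable class $\scrk\subseteq\Mod{A}$ to the definable class $\scrkn=\Mod{E}$ (\cref{Lem-stable-extendible}, \cref{Lem-Mod(A)-Kripke}), extracting $E\in\dNNILpar$ from $\parr$-extendibility via \cref{Theorem-Ghil-Ext}, and feeding $\vdash E\to A$ into the preservativity hypothesis; the already-proved chain $(3)\Rightarrow(4)\Rightarrow(1)\Rightarrow(2)$ then yields $A\arn B$. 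In effect you re-run the proof of \cref{Lem-arn-p-extendible} under the weaker hypothesis $A\prdnpar B$, and your version is arguably cleaner there since the witness $E$ is itself $\NNILpar$-projective, so no substitution $\theta$ ever needs to be threaded through the argument. What the paper's route buys is independence from the Kripke-semantic completeness machinery of Section 4 (it needs only Section 3 plus primality), and it is the template that generalizes to \cref{argt=prtg}; what your route buys is the elimination of the substitution bookkeeping and a direct closing of the circle of equivalences in \cref{Characterization-admissibility}. Two minor points worth making explicit in a write-up: the step from $\Mod{E}\subseteq\Mod{A}$ to $\vdash E\to A$ uses completeness of $\IPC$ for finite rooted tree models, and the application of \cref{Lem-Mod(A)-Kripke} tacitly uses the standing assumption that $\atom$ is finite (the paper's \cref{Lem-arn-p-extendible} inserts an explicit restriction of $\scrk$ to the relevant atomics for this reason).
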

\begin{proof}
\Cref{pres-admis-rel} implies that if 
$A\altarn B$ then $A\altprdnpar B$. For the other direction, 
assume that $A\altprdnpar B$ seeking to show $A\altarn B$. 
By \Cref{pnnilpv-nnilp} it is sufficient to show $A\arpn B$. 
Let $E\in\pNNILpar $ and substitution $ \theta $ such that 
$   \vdash E\to \theta(A) $. Let $ \Pi_A $ be the $ \pNNILpar $-projective resolution for $ A $, as guarantied by 
\Cref{Projec-resol}. Since $A\arpn \bigvee \Pi_A$
we have  $\vdash E\to \bigvee \theta(\Pi_A) $ and hence 
by primality of $E$, 
for some $ F\in\Pi_A $ we have
$\vdash E\to \theta(F)$. On the other hand, since $\Pi_A$
is a projective resolution for $A$ we have 
  $   \vdash  F\to A $. Then   
by $ A\altprdnpar B $ we get $   \vdash F\to B $. Hence 
$   \vdash \theta(F)\to \theta(B) $, which implies 
$   \vdash E\to\theta(B) $, as desired.
\end{proof}
\begin{remark}\label{argt=prtg}
For every $\Gamma$ and a logic $\sft\supseteq\IPC$ which admits 
$\Gamma$-projective resolutions,
i.e.~every $A\in\lcalz$ has a $\Gamma$-projective resolution in $\sft$,
the above proof works and we have ${\argt}={\prtdg}\ $. Hence we have
${\altarpn}={\altprdpnpar}$.
\end{remark}
\begin{remark}
By \Cref{pnnilpv-nnilp,arnp-character,Characterization-admissibility,argt=prtg}  we have:
\begin{center}
 $\ARN\vdash A\rhd B$ \quad  
iff\quad $A\altarn B$ \quad 
iff\quad $A\altarpn B$
\quad iff\quad $A\altprdnpar B$
\quad iff\quad $A\altprdpnpar B$.
\end{center}
\end{remark}

\subsection{$\NNILpar$-preservativity logic}\label{sec-pres-2}
In this subsection we  axiomatize the 
$\nnilpar$-preservativity and 
show $ {\prnpar}=\ARNN $ in which 
$ \ARNN  $ is defined as $ \ARN $ plus the following  axiom schema 
(the substitution axiom):

\begin{equation*}
\VA:  A\rhd \theta(A) \text{ for every substitution $ \theta $ (which by default is identity on parameters)}.
\end{equation*}

The main point of the axiom $ \VA $ is that we may annihilate 
occurrences of atomic variables, and together with other axioms of 
$ \ARN $ we may simplify formulas to $ \NNILpar $-formulas. 

\vspace{3mm}
\noindent\textbf{Important notice.}
In this subsection we assume that the set $\varr$ is \textit{infinite}. 
We require this condition for the proof of \Cref{variant-visser-prop} 
and its consequent theorems and lemmas.

\vspace{3mm}
Before we continue with providing such simplifying algorithm,
let us  define $\nitap{A}{B}$ and $\nitapp{A}{B}$, two variants of 
$\nita{A}{B}$:

\begin{equation*}
\nitap{A}{B}:=\begin{cases}
B\quad &: B\text{ is $\bot$ or parameter}\\
A \to B &: B\in\varr\\
\itap{A}{C}\circ\itap{A}{D} &: 
B=C\circ D\text{ and }\circ\in\{\vee,\wedge\}\\
(C\wedge{A{\downarrow}C})\to {(D{\downarrow}C)} &: B=C\to D 
\end{cases}
\end{equation*}
\begin{equation*}
\nitapp{A}{B}:=\begin{cases}
B\quad &: B\text{ is $\bot$  or parameter}\\
A \to \bot &: B\in\varr\\
\itapp{A}{C}\circ\itapp{A}{D} &: 
B=C\circ D\text{ and }\circ\in\{\vee,\wedge\}\\
 (C\wedge {A{\downarrow}C})\to {(D{\downarrow}C)} &: B=C\to D 
\end{cases}
\end{equation*}
In which $A{\downarrow}C$ is defined as follows: start from $A$ and replace every occurrence of an implication $C\to E$ with its consequent. 
More precisely, $A{\downarrow}C$
is defined by induction on the complexity of $A$:
\begin{itemize}
\item $A$ is atomic or $A=\bot$: $A{\downarrow}C:=A$.
\item $(A_1\circ A_2){\downarrow}C:=(A_1{\downarrow}C)\circ(A_2{\downarrow}C)$ for $\circ\in\{\vee,\wedge\}$.
\item $(A_1\to A_2){\downarrow}C:=\begin{cases}
(A_1{\downarrow}C)\to(A_2{\downarrow}C)\quad &: A_1\neq C\\
A_2{\downarrow}C &: A_1=C
\end{cases}$.
\end{itemize}
Then define the following variants of Visser rule:
\begin{itemize}
	\item[$\VARBP:$]\quad $B\to C\rhd \bigvee_{i=1}^{n+m}
	\itap{B}{E_i}$, in which  
	$B=\bigwedge_{i=1}^n (E_i\to F_i)$ and 
	$C=\bigvee_{i=n+1}^{n+m} E_i$.
\end{itemize}
\begin{itemize}
	\item[$\VARBPP:$]\quad $B\to C\rhd \bigvee_{i=1}^{n+m}
	\itapp{B}{E_i}$, in which  
	$B=\bigwedge_{i=1}^n (E_i\to F_i)$ and 
	$C=\bigvee_{i=n+1}^{n+m} E_i$.
\end{itemize}

\begin{lemma}\label{tech-lem}
$\ARN\vdash \VARBP$.
\end{lemma}
\begin{proof}
Let $B$ and $C$ and $E_i$ and $F_i$ as in $\VARBP$.
Define $X:=\{E_i: 1\leq i\leq n\}$ and $Y:=\{E_i: n+1\leq i\leq n+m\} $ and $Z:=\{E_i\to F_i: 1\leq i\leq n\}$. 
We say that $E$ is a \textit{basic phrase} if it is either $\bot$, atom or implication.
Also we say that $E$ is a \textit{phrase} if it is a finite conjunction of basic phrases. 
We prove the statement of this lemma in following steps:
\begin{enumerate}[leftmargin=*]
\item For every formula $E$, we can  inductively 
define a finite set $X_E$ of phrases such that $\vdash E\lr \bigvee X_E$ 
and $\vdash \itap{B}{E}\lr \bigvee_{F\in X_E}\itap{B}{F}$ as follows:
\begin{itemize}
\item $X_E:=\{ E\}$ if $E$ is $\bot$ or atomic or implication ($E$ is a basic phrase).
\item $X_E:=X_F\cup X_G$ if $E=F\vee G$.
\item $X_E:=\{F'\wedge G':F'\in X_F \text{ and } G'\in X_G\}$ for $E=F\wedge G$.
\end{itemize}
\item 
If $\vdash B\lr B'$  then $\vdash \itap{B}{E}\lr \itap{B'}{E}$:
	The proof is by straightforward induction on the complexity of $E$.
	For the case of $E=F\to G$, note that $(F\wedge B'{\downarrow}F)\to (G{\downarrow}F)$ is equivalent to 
	$B'\to E$. 
\item $\vdash A\to A'$ implies $\ARN\vdash A\rhd A'$. This holds by the definition of $\ARN$.
\item Without loss of generality, we may assume that every $E\in X$ is a phrase:
\\
Let $B':=\bigwedge_{E\to F\in Z}\bigwedge_{E'\in X_E}(E'\to F)$. Item 1 implies
$\vdash B\lr B'$. Let $X':=\bigcup_{E\in X}X_E$. Clearly, every $E\in X'$
is a phrase. Then by
assuming that $\ARN\vdash (B'\to C)\rhd \bigvee_{E\in X'\cup Y}\itap{B'}{E}$,
items 2 and 3 imply $\ARN\vdash (B\to C)\rhd \bigvee_{E\in X'\cup Y}\itap{B}{E}$.
On the other hand, by item 1 
the formula $\bigvee_{E\in X'\cup Y}\itap{B}{E}$ is equivalent (in the intuitionistic logic) 
to $\bigvee_{E\in X\cup Y}\itap{B}{E}$. Thus by item 3 we have
$\ARN\vdash (B\to C)\rhd \bigvee_{E\in X\cup Y}\itap{B}{E}$, as desired.
\item Without loss of generality we also may assume that every $E\in Y$ is a phrase:
\\
Let $Y':=\bigcup_{E\in Y}X_E$ and $C':=\bigvee Y'$. Clearly, every $E\in Y'$ 
is a phrase. Also assume that 
$\ARN\vdash (B\to C')\rhd \bigvee_{E\in X\cup Y'}\itap{B}{E}$.
Then by items 1 and 3 we have $\ARN\vdash (B\to C)\rhd \bigvee_{E\in X\cup Y}\itap{B}{E}$, 
as desired.
\item By the items 4 and 5, we may now assume that $E_i$ is a phrase for every 
$i\leq n+m$. So let $X_i$ be the set of basic  phrases such that $E_i=\bigwedge X_i$.
We claim that for every $i\leq n+m$ if we pick some $E'_i\in X_i$, then we have 

\begin{equation*}
\ARN\vdash B\to C\rhd \bigvee_{i=1}^{n+m}\itap{B}{E'_i}.
\end{equation*}
First note that $\ARN\vdash B\to C\rhd \bigvee_{i=1}^{n+m}\ita{B}{E_i}$.
On the other hand, by definition we have $\vdash \ita B{E_i}\to\ita B{E'_i}$. Thus by item 3
we get 
$\ARN\vdash B\to C\rhd \bigvee_{i=1}^{n+m}\ita{B}{E'_i}$.
Then since for every basic phrase $E$ we have 
$\IPC\vdash\ita B E\lr\itap B E$, we get desired result.
\item By previous item we have 

\begin{equation*}
\ARN\vdash B\to C\rhd \bigwedge\bigvee_{i=1}^{n+m}\itap{B}{E'_i},
\end{equation*}
in which the conjunction in the right hand varies over all choices 
$E'_i$'s from $X_i$'s for $1\leq i\leq n+m$. Then since $\itap{B}{E}$ commutes with $\wedge$ in its consequent, we get 
$\ARN\vdash B\to C\rhd \bigvee_{i=1}^{n+m}\itap{B}{E_i},$ as desired.\qedhere
\end{enumerate}
\end{proof}

\begin{lemma}\label{lem-76}
Given $B,E\in\lcalz$, let $\theta$ be the substitution which replaces 
every variable $x\in\sub B\cup\sub E$ with $x'\vee x''$ in which $x'$ and $x''$ 
are fresh atomic variables. Then 

\begin{equation*}
\ARNN\vdash \itap{\theta(B)}{\theta(E)}\rhd \itapp{B}{E}. 
\end{equation*}
\end{lemma}
\begin{proof}
By induction on the complexity of $E$:
\begin{itemize}[leftmargin=*]
\item $E$ is a parameter or $E=\bot$. Then by definition we have $\itap{\theta(B)}{\theta(E)}=E=\itapp{B}{E}$. 
\item $E=x\in\varr$. Then by definition we have $\nitap{\theta(B)}{\theta(E)}=(\theta(B)\to x')\vee(\theta(B)\to x'')$. Then define the substitution $\gamma'$ and $\gamma''$ 
as follows: 
\begin{itemize}
\item $\gamma'(x'):=\bot$ and $\gamma'(x''):=x$.
\item $\gamma''(x'):=x$ and $\gamma''(x''):=\bot$.
\end{itemize}
Then we have 
$\ARNN\vdash (\theta(B)\to x')\rhd \gamma'\theta(B)\to\gamma'(x')$
and $\ARNN\vdash (\theta(B)\to x'')\rhd \gamma''\theta(B)\to\gamma''(x'')$.
Obviously $\vdash \gamma'\theta(B)\lr B$ and $\vdash \gamma''\theta(B)\lr B$.
Thus $\ARNN\vdash (\theta(B)\to x')\rhd B\to\bot$
and $\ARNN\vdash (\theta(B)\to x'')\rhd B\to\bot$.
Hence by disjunction rule in $\ARNN$ we have $\ARNN\vdash \itap{\theta(B)}{\theta(E)}\rhd
B\to \bot$. On the other hand, by definition we have 
$\itapp{B}{E}=B\to \bot$. Thus $\ARNN\vdash \itap{\theta(B)}{\theta(E)}\rhd
\itapp{B}{E}$, as desired.
\item $E$ is a conjunction or disjunction. Use induction hypothesis.
\item $E=F\to G$. Let $B':=\theta(B)$ and $F':=\theta(F)$ and $G':=\theta(G)$.
Then by definition we have $\itap{\theta(B)}{\theta(E)}=(F'\wedge B'{\downarrow}F')\to(G'{\downarrow}F')$
and $\itapp{B}{E}=(F\wedge B{\downarrow}F)\to(G{\downarrow}F)$.
Let $\gamma$ be a  substitution with $\gamma(x'):=\gamma(x''):=x$. Then by substitution 
axiom in $\ARNN$ we have $\ARNN\vdash \big((B'{\downarrow}F')\to(F'\to G')\big)\rhd
\gamma\big((B'{\downarrow}F')\to(F'\to G')\big)$. 
By induction on $B$, one may easily observe that 
$\vdash \gamma (B'{\downarrow}F') \lr 
(B{\downarrow}F)$. Thus 
$\ARNN\vdash \big((B'{\downarrow}F')\to(F'\to G')\big)\rhd
\big((B{\downarrow}F)\to(F\to G)\big)$ and 
$\ARNN\vdash \itap{\theta(B)}{\theta(E)}\rhd
\itapp{B}{E}$, as desired. \qedhere
\end{itemize}
\end{proof}
\begin{remark}
A closer look at the proof of \Cref{lem-76} shows that 
the other direction also holds:

\begin{equation*}
\ARNN\vdash \itapp{B}{E}\rhd \itap{\theta(B)}{\theta(E)}. 
\end{equation*}
\end{remark}
\begin{lemma}\label{variant-visser-prop}
	$\ARNN\vdash\VARBPP$.
\end{lemma}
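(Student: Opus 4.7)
The plan is to derive $\VARBP$ inside $\ARNN$ by first invoking the standard Visser axiom $\VAR$ of $\ARN$ to reduce to a disjunction of $\itp$-forms, and then transforming each $\itp$-disjunct into its $\itapp$-counterpart using the substitution axiom $\VA$, together with Cut, Conj, Disj, and further applications of $\VAR$.

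First I would apply $\VAR$ to obtain $\ARN\vdash (B\to C)\rhd \bigvee_{i=1}^{n+m}\itp{B}{E_i}$. By Cut and Disj, it then suffices to transform each disjunct on the right into $\itapp{B}{E_i}$ and recombine. The per-disjunct transformation proceeds by case analysis on the shape of $E_i$, mirroring the clauses defining $\itap$ and $\itapp$. If $E_i\in\parr\cup\{\bot\}$ the two expressions already coincide, so nothing is needed. If $E_i$ is an atomic variable, then $\itp{B}{E_i}=B\to E_i$ must become $\itapp{B}{E_i}=B\to\bot$: here $\VA$ is used to substitute $\bot$ for $E_i$, and the collateral change this makes to $B$ itself is absorbed by the fact that the conjuncts $E_j\to F_j$ of $B$ with $E_j=E_i$ become intuitionistically trivial after the substitution. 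For compound $E_i$ one proceeds by structural induction: the propositional connectives inside $E_i$ are distributed via Conj/Disj, while an implication $E_i=F\to G$ is unpacked by a further application of $\VAR$ to the sub-implication; the strip operator $B{\downarrow}F$ appearing in $\itap{B}{F\to G}$ arises from this iterated Visser step, since each application replaces antecedents of the form $F\to(\cdot)$ in $B$ by their consequents.

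Combining all the per-disjunct derivations through Disj yields $\ARNN\vdash (B\to C)\rhd \bigvee_{i=1}^{n+m}\itapp{B}{E_i}$, which is exactly $\VARBP$. The main obstacle is the atomic-variable subcase: the substitution used to pass from $B\to E_i$ to $B\to\bot$ is global and also rewrites the $B$ on the left, so the argument cannot treat each disjunct in isolation but must exploit the overall shape of the disjunction produced by $\VAR$ (what is lost from one disjunct under the substitution reappears in another). The second delicate point is the implication subcase, where matching the strip operator $B{\downarrow}F$ on the nose requires a carefully orchestrated sequence of Visser applications, substitution instances of $\VA$, and Cuts; verifying that this sequence reproduces exactly the recursive definition of $\itap$ is what makes the induction work.
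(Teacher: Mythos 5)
Your overall skeleton (invoke the Visser axiom of $\ARN$, repair the disjuncts whose $E_i$ is an atomic variable, recombine with Disj) is the same as the paper's, and your treatment of the cases $E_i\in\parr\cup\{\bot\}$ and compound $E_i$ is in the right spirit. But the atomic-variable subcase --- the only place where the substitution axiom $\VA$ is genuinely needed, and hence the entire content of the lemma --- is not correctly resolved. You propose to pass from $\itap{B}{x}=B\to x$ to $\itapp{B}{x}=B\to\bot$ by substituting $\bot$ for $x$. The axiom $\VA$ only yields $D\rhd\theta(D)$, so after Cut the disjunct you obtain is $B[x:=\bot]\to\bot$, not $B\to\bot$. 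Since $x$ may occur throughout $B$ (inside other antecedents $E_i$, inside the consequents $F_i$, and nested under implications), neither of $B[x:=\bot]\to\bot$ and $B\to\bot$ implies the other in $\IPC$, and no further substitution can restore $B$ from $B[x:=\bot]$. Your claim that the collateral damage is ``absorbed'' because conjuncts with antecedent $x$ trivialize accounts for only one kind of occurrence of $x$, and your fallback remark that ``what is lost from one disjunct reappears in another'' does not describe an actual derivation.

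The paper's device at this point is different and is the heart of the proof. For $E_j=x\in\varr$ it substitutes $x\mapsto y\vee z$ with $y,z$ fresh, obtaining $(B\to x)\rhd (B'\to(y\vee z))$ by $\VA$; it then applies the Visser axiom a \emph{second} time to $B'\to(y\vee z)$, whose consequent is now a disjunction, thereby splitting off $B'\to y$ and $B'\to z$ as separate disjuncts alongside the $\itap{B'}{E'_i}$ for $i\neq j$; finally it maps back with three different substitutions: $y,z\mapsto x$ on the untouched disjuncts (recovering $\itap{B}{E_i}$), and $y\mapsto\bot,\ z\mapsto x$ (respectively $y\mapsto x,\ z\mapsto\bot$) on $B'\to y$ (respectively $B'\to z$), each of which sends $B'$ to $B[x:=\bot\vee x]$, which is $\IPC$-equivalent to $B$, while sending the consequent to $\bot$. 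The fresh disjunction $y\vee z$ is exactly what allows the two fates of $x$ --- being annihilated to $\bot$ versus being restored --- to land in different disjuncts that can then be specialized independently. Without this step your derivation of $\VARBP$ does not go through.
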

\begin{proof}
The idea is that we must show that another variant of Visser's rule, holds in 
$\ARNN$ in which, the variables must be annihilated (replaced by falsity). 
However it looks easy to directly use axiom $\VA$ to achieve this, there is a difficulty 
which prevents us doing it in a straightforward way: 
annihilating a variable, might cause damage to other occurrences of the variable.

Let $B$ and $C$ as in $\VARBP$. \Cref{tech-lem} implies 
$\ARNN\vdash (B\to C)\rhd \bigvee_{i=1}^{n+m}\itap{B}{E_i}$.
Let $X:=\{ E_i\in\varr: 1\leq i\leq n+m\} $ 
and $Y:=\{E_i\nin\varr: 1\leq i\leq n+m\} $. 
Then obviously we have $\ARNN\vdash (B\to C)\rhd \bigvee_{x\in X}B\to{x}\vee
\bigvee_{E\in Y}\itap{B}{E}$.
By the following argument, on the other hand we have 

\begin{equation}\label{eqqq}
\ARNN\vdash \bigvee_{x\in X}(B\to x)\rhd\big(B\to \bot\vee\bigvee_{E_i\in Y \& i\leq n}
\itapp{B}{E_i}\big).
\end{equation}
Then since for $E\in X$ we have $\itapp B E=B\to \bot$, 
we may conclude 

\begin{equation*}
\ARNN\vdash (B\to C)\rhd \bigvee_{i=1}^{n+m}\itapp B {E_i},\text{ as desired}.
\end{equation*}
So it remains only to show \cref{eqqq}.  
Let $\theta$ be a substitution such that $\theta(x)=x'\vee x''$ 
for every atomic variables $x$, and such that $x',x''\in\varr$ are some fresh variables. 
Note that since the set $\varr$ is assumed to be infinite, 
we can freely pick such fresh variables.
Then for every atomic variable $x$ by axiom 
$\VA$ we have 
$\ARNN\vdash  B\to x\rhd \theta(B)\to (x'\vee x'')$. 
By \Cref{tech-lem}  we have

\begin{equation*}
\ARNN\vdash  \theta(B)\to (x'\vee x'')\rhd  \theta(B)\to x' \vee 
\theta(B)\to x''\vee
\bigvee_{i=1}^n\itap{\theta(B)}{\theta(E_i)}.
\end{equation*}
Then thanks to the axiom $\VA$  we have  
(by replacing $\bot $ for $y'$ and $y$ for $y''$ for every variable $y$)
$\ARNN\vdash \theta(B)\to x'\rhd B\to \bot$. Similarly, we also have 
$\ARNN\vdash \theta(B)\to x''\rhd B\to \bot$. 
Then with the aid of  \Cref{lem-76} we may infer desired result in eq.~(\ref{eqqq}).
\end{proof} 

\begin{definition}\label{comp-meas-def}
Define the complexity number 
	$\ofrak A\in\nat^3$ as follows: 
	\begin{itemize}[leftmargin=*]
		\item $\Ifrak:=\{E\to F: E\to F\in\sub{A} 
\}$%
%
. \uparan{Remember that $A\in\sub A$.}
		\item $\ifrak A:=\max\{\#\Ifrak B:B\in \Ifrak A\}$. 
		\uparan{$\#X$ indicates the number of elements in the set $X$}
		\item $\cfrak A$ is defined as the number of occurrences of connectives 
		$\{\vee,	\wedge,\to\}$ in $A$.
		\item $\cfrakz A:=\max\{\cfrak B: B\in \Ifrak A\}$.
		\item $\vfrak A $ is defined as the number of occurrences of 
		variables in $A$.
		\item $\ofrak A:=(\ifrak A,\cfrakz A, \vfrak A ,\cfrak A)$. 
					Finally, we 
					let $\leq$ be the lexicographic order on tuples in $\nat^4$.
	\end{itemize}
\end{definition}

\begin{lemma}\label{star-prop}
	For every $A\in\lcalz$ one may effectively compute
	$A^\star\in\NNILpar$ such that:
	\begin{enumerate}
		\item $ \IPC\vdash A^\star\to A $,
		\item $ \ARNN\vdash A\rhd A^\star$,
    	\item $\suba{A^\star}\subseteq \suba{A}$.
	\end{enumerate}
\end{lemma}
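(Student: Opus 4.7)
The plan is to construct $A^\star$ by induction on a suitable complexity measure of $A$, simultaneously verifying the three required properties. The principal tools are the substitution axiom $\VA$, the derived Visser-type axiom $\VARBP$ (Lemma \ref{variant-visser-prop}), and Montagna's rule $\mont(\parr)$. For the atomic cases I set $p^\star := p$ for $p \in \parr$, $\bot^\star := \bot$, and $x^\star := \bot$ for each variable $x \in \varr$; the derivation $\ARNN \vdash x \rhd \bot$ is a direct instance of $\VA$ with $\theta(x) := \bot$. For boolean combinations, distributivity suffices: $(B \wedge C)^\star := B^\star \wedge C^\star$ and $(B \vee C)^\star := B^\star \vee C^\star$, with property (2) following from the inductive derivations via the Conj, Disj, and Cut rules.

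The implication case $A = B \to C$ is the crux. Using IPC-equivalences (validated in $\ARNN$ via $\Ax$ and Cut) I would first normalize so that $B$ is a conjunction of atoms and implications and $C$ a disjunction of atoms and implications. Atomic parameters in $B$ are pulled out by Montagna's rule $\mont(\parr)$ and handled recursively under the parametric prefix, while atomic variables in $B$ are eliminated via $\VA$ substituting $\top$ for the variable. This leaves $B$ a pure conjunction of implications $\bigwedge_{i=1}^{n}(E_i \to F_i)$ with $C = \bigvee_{j=n+1}^{n+m} E_j$, at which point $\VARBP$ yields $\ARNN \vdash A \rhd \bigvee_k \itapp{B}{E_k}$, and I would set $A^\star := \bigvee_k \itapp{B}{E_k}^\star$ by recursion on each disjunct.

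The main obstacle is termination, since $\itapp{B}{E_k}$ need not be a structural subterm of $A$. I would introduce a lexicographic measure on formulas, ranking first by the maximum depth of left-nested implications (i.e.~the distance from $\NNILpar$) and then by total size, and argue via case analysis on the shape of $E_k$ that this measure strictly decreases on each recursive call. The most delicate case is $E_k = G \to H$, where the auxiliary operation $B \downarrow G$ built into $\itap{B}{G \to H}$, combined with the IPC-equivalence $B \to (G \to H) \equiv (B \wedge G) \to H$, reduces the depth of left-nested implications by one. Once termination is secured, properties (1) and (3) follow by inspection of $\itapp{B}{\cdot}$ together with the inductive hypotheses (every atom of $\itapp{B}{E_k}$ already occurs in $B \to C$, and IPC-implications back to $A$ are immediate), while property (2) follows by combining $\VARBP$ with Cut and Disj.
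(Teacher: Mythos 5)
Your construction follows the paper's proof closely in its overall shape: the same atomic and Boolean clauses, elimination of variables by $\VA$, extraction of parameters by Montagna's rule, use of the derived axiom $\VARBP$ in the implication case, and a lexicographic termination measure whose leading component measures nesting of implications on the left (the paper's $\ifrak{\cdot}$, with the key decrease for $\itapp{B}{E_i}$ delegated to \citep[sec.~7]{Visser02}). There is, however, a genuine gap in the recursive clause you give for $A=B\to C$: you set $A^\star:=\bigvee_k(\itapp{B}{E_k})^\star$, and this formula is in general \emph{not} a lower bound for $A$, so property (1) fails. Take parameters $p,q,r$ and $A=(p\to q)\to r$, so that $B=p\to q$, $C=r$, $E_1=p$, $E_2=r$. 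Since $p$ and $r$ are parameters, $\itapp{B}{p}=p$ and $\itapp{B}{r}=r$, and your clause gives $A^\star=p\vee r$; but $\IPC\nvdash p\to((p\to q)\to r)$ (consider the one-point model forcing $p,q$ and not $r$). The disjunction $\bigvee_k\itapp{B}{E_k}$ records what $A$ \emph{preserves} (the Visser consequence of $A$), not something that implies $A$.

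The paper avoids this by defining
$A^\star:=\bigl(A_1\wedge\bigvee_{i=1}^{n+m}\itapp{B}{E_i}\bigr)^\star$ where $A_1:=\bigwedge_{i=1}^{n}\bigl((B{\downarrow}E_i)\to C\bigr)$,
and it is precisely the conjunct $A_1$ that makes $\IPC\vdash A^\star\to A$ verifiable: assuming $B$ and $\itapp{B}{E_i}$ one derives $E_i$, hence $B{\downarrow}E_i$, hence $C$ via $A_1$. In the example above $A_1$ contributes $q\to r$, yielding the correct approximation $(q\to r)\wedge(p\vee r)$ instead of $p\vee r$. With this correction you would also need to recheck termination, since the recursive call is now on the single formula $A_1\wedge\bigvee_i\itapp{B}{E_i}$ rather than on each disjunct separately; that is the bookkeeping the paper performs with the triple $\ofrak{\cdot}$. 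The remainder of your argument (the atomic and Boolean cases, the handling of variables and parameters occurring as conjuncts of $B$, and the derivations for property (2) via $\VARBP$, Cut and Disj) is in line with the paper once this clause is repaired.
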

\begin{proof}
First   by  recursion on $\ofrak A$ we define $A^\star$. Then by induction on 
	$\ofrak A$ we show that this $A^\star$ satisfies required properties.  
	So assume that 
	for every $B$ with $\ofrak {B}< \ofrak A$ we already defined $B^\star\in\NNILpar$ satisfying 
		\begin{enumerate}
		\item $ \IPC\vdash B^\star\to B $,
		\item $ \ARNN\vdash B\rhd B^\star$,
    	\item $\suba{B^\star}\subseteq \suba{B}$.
	\end{enumerate} 
	 Then we have the following cases for $A$. The required properties for $A^\star$ are
	  evident in many cases below and we skip the argument for them, except for few 
	  cases/properties which we do provide some additional argument right after 
	  the definition.
	\begin{itemize}[leftmargin=*]
		\item $A\in\varr$: Define $A^\star:=\bot$. Note that by
		the substitution axiom $\VA$, we have  $\ARNN\vdash A\rhd A^\star$.
		\item  $A\in\parr$: Define $A^\star:=A$. 
		\item $A=A_1\wedge A_2$: Define $A^\star:=A_1^\star\wedge A_2^\star$. 
		Observe that $\ifrak {A_i}\leq\ifrak A$ and $\cfrakz {A_i}\leq \cfrakz A$ and 
		$\vfrak{A_i}\leq \vfrak A$ and finally $\cfrak{A_i}<\cfrak{A}$.  
		Thus $\ofrak {A_i}<\ofrak{A}$ for $i=1,2$, and we have a legitimate inductive 
		definition. 
		\item $A=A_1\vee A_2$: Define $A^\star:=A_1^\star\vee A_2^\star$.
		Observe that $\ifrak {A_i}\leq\ifrak A$ and $\cfrakz {A_i}\leq \cfrakz A$ and 
		$\vfrak{A_i}\leq \vfrak A$ and finally $\cfrak{A_i}<\cfrak{A}$.  
		Thus $\ofrak {A_i}<\ofrak{A}$ for $i=1,2$, and we have a legitimate inductive 
		definition.  		
		\item $A=B\to C$: We have several sub-cases:
		\begin{itemize}[leftmargin=*]
			\item $B$ has an outer disjunction, i.e.~a disjunction 
			$E\vee F$ which is not in 
			the scope of  $\to$. Then,  there is some 
			formula $B_0(x)$ with the following properties: 
			(1)  $x$ is a variable not appearing in $B$,
			(2) $x$ occurs only once in $B_0$,
			(3) $x$ has an outer occurrence in $B_0$, 
			i.e.~$x$ is not in the scope of arrows,
			(4) $B=B_0[x: E\vee F]$. Then, we define $B_1:=B_0[x:E]$ and 
			$B_2:=B_0[x:F]$ and let 
			
			\begin{equation*}
			A^\star:=A_1^\star\wedge A_2^\star \quad \text{with} \quad 
			A_i:=(B_i\to C)\quad  \text{for } i=1,2.
			\end{equation*}
			In the following items, we show   that $\ofrak {A_i}<\ofrak A$ for $i=1,2$.
			\begin{itemize}[leftmargin=*]
			\item $\ifrak{A_i}\leq\ifrak A$. It is enough to define a 1-to-1 function 
			$f$ mapping the set of implications in ${A_i}$ to implications in $A$.
			Let $D$ be an implication in $A_i$. Then either  $D=A_i$ or it is an
			implication in $B_i$ or an implication in $C$.  
			In the first case we define $f(D)=A$.
			In the second and third cases, we define $f(D):=D$. Note that since 
			$E\vee F$ is an outer occurrence of disjunction in $B$, $D$ can not include 
			that occurrence of $E\vee F$ and thus $D$ is also a subformula of $B$. 
			\item $\cfrakz{A_i}<\cfrakz A$. Note that since $A_i$ and $A$ are both
			implications, then $\cfrakz{A_i}=\cfrak{A_i}$ and $\cfrakz{A}=\cfrak A$.
			So we must only show $\cfrak{A_i}<\cfrak{A}$, which is obvious by definition
			of $A_i$: it has at least one occurrence $\vee$ less than $A$.
			\end{itemize}
			\item $C$ has an outer conjunction, i.e.~a conjunction  
			$E\wedge F$ which is not in 
			the scope of  $\to$. Then there is some 
			formula $C_0(x)$ with the following properties: 
			(1)  $x$ is a variable not appearing in $C$,
			(2) $x$ occurs only once in $C_0$,
			(3) $x$ has an outer occurrence in $C_0$, 
			i.e.~$x$ is not in the scope of arrows,
			(4) $C=C_0[x: E\wedge F]$. Then, we define $C_1:=C_0[x:E]$ and 
			$C_2:=C_0[x:F]$ and let 
			
			\begin{equation*}			
			A^\star:=A_1^\star\wedge A_2^\star \quad \text{ with } \quad 
			A_i:=B\to C_i.
			\end{equation*}
			Then,  with an argument similar to the previous case, we can show that 
			$\ofrak{A_i}<\ofrak A$.
			\item 
				$B=\bigwedge^n_{i=1}B_i$ and 
				$C=\bigvee^{n+m}_{j=n+1}E_j$ in which 
				every $B_i$ and $E_j$ is either atomic   or implication.  
				Again we have several sub-cases:
			\begin{itemize}[leftmargin=*]
				\item $B_i=\bot$ for some $i$. Then define $A^\star:=\top$.
				\item  $B_i\in\varr$ for some $i$. Let $\theta$ be a substitution
				such that $\theta(B_i):=\top$ and $\theta$ is identity elsewhere. 
				Then define $A_1:=\theta (A)$ and $A^\star:=A_1^\star$. 
				We must show first that $\ofrak{A_1}<\ofrak A$. 
				The important point here is that $\top=\bot\to \bot$ 
				and hence this replacement includes addition of new implication to 
				the formula, possibly resulting in bigger $\ifrak{A_1}$. However, 
				it is not a real issue, since our definition of $\ifrak{A_1}$ does 
				not count $\bot\to\bot $. Hence we have 
				$\ifrak{A_1}\leq\ifrak{A}$. On the other hand  since $A$ and $A_1$ 
				both are implications, $\cfrakz{A_1}=\cfrak{A_1}$	and 
				$\cfrakz{A}=\cfrak A$. 
				Furthermore, $A_1$ and $A$ have a bijection of occurrences of $\vee$ and $\wedge$, resulting in $\cfrak{A}=\cfrak {A_1}$.				
				Finally note that $\vfrak{A_1}<\vfrak A$ and hence 
				$\ofrak {A_1}<\ofrak A$.
				
				To show that $\ARNN\vdash A\rhd A^\star$, by the induction hypothesis and 
				cut rule in $\ARNN$, 	it is enough to show that $\ARNN\vdash A\rhd A_1$,
				which itself is an instance of the axiom $\VA$.
				\item $B_i\in\parr$  for some $i$.  Let $B'$ be the result of 
				removing $B_i$ from the conjunction $B$ and define
				
				\begin{equation*}				
				 A_1:=B'\to C \quad \text{ and }\quad 
				A^\star:=B_i\to A_1^\star.
				\end{equation*}
				Then obviously $\ifrak{A_1}=\ifrak{A}$ and 
				$\cfrakz{A_1}=\cfrak{A_1}<\cfrak A=\cfrakz A$. 
				This finishes showing that $\ofrak{A_1}<\ofrak A$. 
				
				Then, by the induction hypothesis, we have $\ARNN\vdash A_1\rhd A_1^\star$,
				and thus, by Montagna's rule, 
				$\ARNN\vdash (B_i\to A_1)\rhd (B_i\to A_1^\star)$.
				This obviously implies $\ARNN\vdash A\rhd A^\star$, as desired.
				\item $B_i=E_i\to F_i$, for every $1\leq i\leq n$ and there is some 
				$1\leq i\leq n+m$ and $E,F,F'$ such that 
				$E\to F\in \Ifrak{E_i}$ and $E\to F'\in \Ifrak{F}$.
				In this case, then define 
				\item $B_i=E_i\to F_i$, for every $1\leq i\leq n$ and there is not any
				$1\leq i\leq n+m$ and $E,F,F'$ such that   
				$E\to F\in \Ifrak{E_i}$ and $E\to F'\in \Ifrak{F}$.
				Then, we define $ B{\downarrow}E_i:=F_i\wedge 
				\bigwedge_{j\neq i}(E_j\to F_j)$ and  
				
				\begin{equation*}
				G_i:=(B{\downarrow}E_i) \to C
				\quad       \text{and} \quad  
				H_i:=\itapp{B}{E_i}
				\quad       \text{and} \quad 
				A^\star:=(\bigwedge_{i=1}^n G_i^\star\wedge \bigvee_{i=1}^{n+m}H_i^\star).
				\end{equation*}
			\end{itemize}
		\end{itemize}
	\end{itemize}
	For the last case, we reason for following facts:
	\begin{itemize}[leftmargin=*]
		\item $\ofrak{G_i}<\ofrak A$
		for every $0\leq i\leq n$. 
		Observe that  
		$\ifrak{G_i}=\#\Ifrak{G_i}\leq \#\Ifrak A=\ifrak A$. Note that equality can not be excluded,
		since it still could be the case that $E_i\to F_i\in\sub{G_i}$.
		Furthermore, $\cfrakz{G_i}=\cfrak{G_i}<\cfrak A=\cfrakz A$, and thus 
		$\ofrak{G_i}<\ofrak A$.
		\item $\ofrak{ H_i}<\ofrak A$ for every 
		$1\leq i\leq n+m$. We have the following cases for $E_i$:
		\begin{itemize}[leftmargin=*]
		\item $E_i\in\varr$: Then we have 
		$H_i=B\to\bot$. Hence obviously $\ifrak{ H_i}\leq\ifrak A$
		and 		$\cfrakz{H_i}=\cfrak{H_i}\leq\cfrak A=\cfrakz A$ and
		$\vfrak{ H_i}<\vfrak A$. 
		Thus $\ofrak{ H_i}<\ofrak A$, as desired.  
		\item $E_i\in\parr$ or $E_i=\bot$. Then we have $H_i=E_i$ 
		and then   obviously $\ifrak {H_i}=0< \ifrak{A}$. Thus $\ofrak{H_i}<\ofrak{A}$.
		\item $E_i=E\circ F$ and $\circ\in\{\vee,\wedge\}$:  
		Then  \Cref{Lem-t1} implies $\ifrak{H_i}\leq \ifrak A$. 
		On the other hand,	by definition we have 
		$H_i=\itapp{B}{E}\circ\itapp B F$ and obviously 
		$\cfrakz{B\to E},\cfrakz{B\to F}< \cfrakz{A}$. Hence
		\Cref{Lem-t1} implies $\cfrakz{\nitapp{B}E},\cfrakz{\nitapp B F}<\cfrakz{A}$.
		Thus $\cfrakz{H_i}<\cfrakz{A}$ and $\ofrak{H_i}<\ofrak{A}$, as desired.
		\item $E_i=E\to F$. Then $H_i:=(E\wedge{B{\downarrow}E})\to (F{\downarrow}E)$. 
		It is enough to show  
		$\ifrak{H_i}<\ifrak{B\to(E\to F)}$. For this aim,
		we define a one-to-one function $f:\Ifrak{H_i}\longrightarrow
		\big(\Ifrak{B\to(E\to F)}\setminus\{ E\to F\}\big)$. 
		We define $f(G)$ for every  $G\in \Ifrak{H_i}$ as follows.		
		\begin{itemize}
		\item[i.] $G=H_i$. Then define   $f(G):=(B\to(E\to F))$. Obviously, in this case 
		we do have $E\to F\neq f(G)$.
		\item[ii.] 
		$G\in \Ifrak{E}\cup \Ifrak{F{\downarrow}E}\cup \Ifrak{B{\downarrow}E}$.
		Then 
		there must exist some $G'\in  \Ifrak E\cup \Ifrak F\cup \Ifrak B$ 
		such that $G= (G'{\downarrow}E)$. Then define $f(G):=G'$ for some such $G'$ with 
		minimum length.
		Note that in this case also we can not have $f(G)=E\to F$; otherwise, 
		we could have defined $f(G):=F$, since $(E\to F){\downarrow}E=F{\downarrow}E$,
		which contradicts the minimality of the length of $f(G)$. 
		\end{itemize}
Since $G'_1=G'_2$ implies $(G'_1{\downarrow}E)=(G'_2{\downarrow}E)$, the above-defined function $f$,
is injective.
		\end{itemize}
		\item $\ARNN\vdash A\rhd A^\star$: 
		Note that $\vdash A\to G_i$ for every $i\leq n$. 
		This implies $\ARNN\vdash A\rhd G_i$ for every $i\leq n$. On the other hand, \Cref{variant-visser-prop}
		implies $\ARNN\vdash A\rhd\bigvee_{i=1}^{n+m}H_i$.
		Finally,		
		by induction hypothesis we have desired conclusion.	
		\item $\IPC\vdash  A^\star\to A$: 
		By induction hypothesis, 
		it is sufficient to show  

		\begin{equation*}		
		\IPC\vdash (\bigwedge_{i=1}^n G_i\wedge \itapp{B}{E_j})\to A
		\end{equation*}
		for every $1\leq j\leq n+m$.
		So we reason inside $\IPC$. Assume $\bigwedge_{i=1}^n G_i$ and $\nitapp{B}{E_j}$
		and $B$. We want to derive $C$. If $j>n$, then by definition we have 
		$\itapp{B}{E_j}\to C$ and we are done. If $j\leq n$, by 
		$G_j$ we have $(B{\downarrow}E_j)\to C$ and, hence, 
		it is sufficient to show $B{\downarrow}E_j$. 
		Then, by $B$, it is sufficient to show 
		$E_j$, which holds by $B$ and $\nitapp{B}{E_j}$.\qedhere
	\end{itemize}
\end{proof}
\begin{lemma}\label{Lem-t1}
Given $B,E\in\lcalz$ and $\ifrak{.}$ and $\cfrakz{.}$ as  defined in 
the  \Cref{comp-meas-def}, we have 
\begin{enumerate}
\item $\ifrak{\nitapp{B}{E}}\leq \ifrak{B\to E}$.
\item $\cfrakz{\nitapp{B}{E}}\leq \cfrakz{B\to E}$.
\end{enumerate}
\end{lemma}
\begin{proof}
The proof is by  induction on the complexity of $E$.
We only reason for the following case and only for 
the first item and leave the rest to reader.

If $E$ is an implication $C\to D$. Then 
$\itapp B E := (C\wedge B{\downarrow}C)\to (D{\downarrow}C)$. It is enough to define an injective function 
$f:\Ifrak{(C\wedge B{\downarrow}C)\to (D{\downarrow}C)}\longrightarrow \Ifrak{B\to(C\to D)}$.
We define $f((C\wedge B{\downarrow}C)\to (D{\downarrow}C)):=B\to(C\to D)$ and for every $G\in \Ifrak C\cup \Ifrak{B{\downarrow}C}\cup \Ifrak{D{\downarrow}C}$, there exist
some $G'\in \Ifrak{C}\cup \Ifrak B\cup \Ifrak D$ such that $G=G'{\downarrow}C$. Define $f(G)$ as any such $G'$.
Then it is obvious that this $f$ is injective.
\end{proof}

\begin{theorem}\label{Theorem-nnilp-Pres}
	For every $A,B$, following items are equivalent:
	\begin{enumerate}
		\item $ \ARNN\vdash A\rhd B $,
		\item $ A\prnpar B $,
		\item $\vdash A^\star\to B $.
	\end{enumerate}
\end{theorem}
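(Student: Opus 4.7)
The plan is to prove the equivalences via the cycle $1\Rightarrow 2\Rightarrow 3\Rightarrow 1$, leaning almost entirely on the hard technical lemma \cref{star-prop}, which already supplies an $\NNILpar$-approximation $A^\star$ satisfying $\vdash A^\star\to A$ and $\ARNN\vdash A\rhd A^\star$. Once that lemma is in hand, each direction becomes essentially one line, so the real substance of the theorem is packaging.

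For $1\Rightarrow 2$, I would verify soundness of $\ARNN$ for $\prnpar$. All the $\ARN$-axioms and rules are already sound for $\prdnpar$ by \cref{arnp-character} (since $\prdnpar={\arn}$), and hence sound for $\prnpar$ by \cref{pres-admis-asce} together with the trivial inclusion $\NNILpar\subseteq\dNNILpar$. So it suffices to check the substitution axiom $\VA$: given $E\in\NNILpar$ with $\vdash E\to A$, apply $\theta$ to get $\vdash \theta(E)\to\theta(A)$; since $\theta$ is identity on $\parr$ and $E\in\lcalzpar$, we have $\theta(E)=E$, whence $\vdash E\to\theta(A)$, as needed.

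For $2\Rightarrow 3$, use \cref{star-prop}: $A^\star\in\NNILpar$ and $\vdash A^\star\to A$, so $A^\star$ is itself an $\NNILpar$-witness; applying the hypothesis $A\prnpar B$ immediately yields $\vdash A^\star\to B$. For $3\Rightarrow 1$, again by \cref{star-prop} we have $\ARNN\vdash A\rhd A^\star$; the hypothesis $\vdash A^\star\to B$ combined with the axiom $\ax$ gives $\ARNN\vdash A^\star\rhd B$; one application of the Cut rule finishes.

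The genuine obstacle is not in this theorem but in the preceding \cref{variant-visser-prop,star-prop} where the recursion defining $A^\star$ and the verification of its three properties (especially the $\ARNN$-derivability $A\rhd A^\star$, which requires the full power of the substitution axiom $\VA$ together with the refined Visser axiom $\VARBP$) constitutes the main work; once that is granted, the present equivalence is purely formal.
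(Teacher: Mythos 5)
Your cycle $1\Rightarrow 2\Rightarrow 3\Rightarrow 1$ is the same as the paper's, and your $2\Rightarrow 3$ and $3\Rightarrow 1$ coincide with its proof exactly (both reduce to \cref{star-prop} plus the axiom ${\sf Ax}$ and Cut). The gap is in $1\Rightarrow 2$. Monotonicity (\cref{pres-admis-asce} with $\NNILpar\subseteq\dNNILpar$) transfers validity of individual \emph{axiom instances} from $\prdnpar$ to $\prnpar$ --- so you do get the Visser axiom $\VAR$ for free via \cref{arnp-character}, a small economy over the paper's separate semantic \cref{VA-validity} --- but it does not transfer soundness of \emph{rules}. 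A rule is sound for a relation $R$ when premises in $R$ force the conclusion into $R$; knowing this for $R={\prdnpar}$ and $R\subseteq S$ says nothing about $S={\prnpar}$, and in an $\ARNN$-derivation the premises of a rule application may themselves have been obtained using $\VA$, hence be only $\prnpar$-valid and not $\prdnpar$-valid. So ``it suffices to check $\VA$'' is not justified: you must check each rule directly for $\prnpar$.

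For Cut, Conj and $\mont(\parr)$ that direct check is trivial, but for Disj it is not: given $A\prnpar C$, $B\prnpar C$ and $\vdash E\to(A\vee B)$ with $E\in\NNILpar$, you cannot split $E$ over the disjunction unless $E$ is $\IPC$-prime. This is precisely where the paper's \cref{Lem-con1-ipc} invokes \cref{prnp=prpnp} (resting on \cref{Lem-nnil-normal-form}, the decomposition of $\NNILpar$-formulas into prime disjuncts) to pass to $\pNNILpar$ and back. Add that verification and your argument closes; the rest is equivalent to the paper's proof, with the genuine work indeed concentrated in \cref{variant-visser-prop,star-prop} as you say.
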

\begin{proof}
		We show $1 \Rightarrow 2 \Rightarrow
	3\Rightarrow 1$:
	\begin{itemize}
		\item  $1\Rightarrow 2$: By \Cref{Lem-con1-ipc}.   
		\item   $2\Rightarrow 3$: Let $ A\prnpar B $.  
		By \Cref{star-prop} we have $ A^\star\in\NNILpar $ and 
		$ \vdash A^\star\to A $.  
		Then by $ A\prnpar B $  
		we get $ \vdash A^\star\to B $.  
		\item $3\Rightarrow 1$: 
		From $ \vdash A^\star\to B $ we get 
		$ \ARNN\vdash A^\star\rhd B $. Also by \Cref{star-prop} 
		we have $\ARNN\vdash A\rhd A^\star$ and then 
		Cut  implies desired result.\qedhere
	\end{itemize}
\end{proof}
\begin{lemma}\label{prnp=prpnp}
${\prnpar}={\prpnpar}$ .
\end{lemma}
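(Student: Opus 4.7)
The plan is to establish both inclusions separately, with one being immediate from monotonicity and the other being a direct consequence of the normal form for $\NNILpar$-propositions proved earlier.

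First, I would observe that $\pNNILpar \subseteq \NNILpar$ by definition (prime $\NNILpar$-propositions are $\NNILpar$-propositions). Therefore \cref{pres-admis-asce} immediately yields ${\prnpar} \subseteq {\prpnpar}$, which handles one direction.

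For the reverse inclusion ${\prpnpar} \subseteq {\prnpar}$, the key ingredient is \cref{Lem-nnil-normal-form}, which states that modulo $\IPC$-provable equivalence $\NNILpar = \pNNILpar^\vee$. Assume $A \prpnpar B$ and let $E \in \NNILpar$ with $\vdash E \to A$. By \cref{Lem-nnil-normal-form}, $E$ is $\IPC$-equivalent to some disjunction $\bigvee_i E_i$ with each $E_i \in \pNNILpar$. For every $i$ we then have $\vdash E_i \to A$, and $A \prpnpar B$ yields $\vdash E_i \to B$. Taking the disjunction gives $\vdash E \to B$, as required.

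Alternatively, and more succinctly, one may simply combine $\NNILpar = \pNNILpar^\vee$ with \cref{vee-pres}, which gives ${\prpnpar} = {\pres{}{\pNNILpar^\vee}} = {\pres{}{\NNILpar}} = {\prnpar}$. There is essentially no obstacle here; the entire work has already been done in proving the normal form \cref{Lem-nnil-normal-form} and the $\vee$-closure property \cref{vee-pres}, so the proof reduces to citing these two results.
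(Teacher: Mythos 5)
Your proof is correct and is essentially the paper's own argument: the paper's proof consists precisely of citing \cref{Lem-nnil-normal-form} ($\NNILpar=\pNNILpar^\vee$ up to $\IPC$-equivalence) and \cref{vee-pres} (${\prtg}={\prtgv}$), which is your ``succinct'' version, and your expanded two-inclusion argument just unfolds the same computation.
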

\begin{proof}
\Cref{Lem-nnil-normal-form}\footnote{Notice that the statement of 
\Cref{Lem-nnil-normal-form} is for the finite language. One may easily observe that its 
proof is not relying on finiteness of atomic formulas.
Nevertheless, it is also easy to 
observe that the case for infinite language can be derived from the finite case.} 
and \Cref{vee-pres}\footnote{Notice that the \Cref{vee-pres} holds with the same proof for 
a language with infinite set of atomics.}. 
\end{proof}

\begin{lemma}\label{VA-validity}
	$B\to C\prnpar \bigvee_{i=1}^{n+m}
	\itp{B}{E_i}$, in which  
	$B=\bigwedge_{i=1}^n (E_i\to F_i)$ and 
	$C=\bigvee_{i=n+1}^{n+m} E_i$.
\end{lemma}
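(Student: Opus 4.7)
The statement is the preservativity analogue of Visser's rule $\VAR$, whose soundness for $\NNILpar$-admissibility was handled inside the proof of \cref{Lem-ARN-implies-arpn}. My plan is to imitate that argument with the substitution $\theta$ set to the identity. First I reduce to the prime case: by \cref{prnp=prpnp} it is enough to prove $B\to C\prpnpar \bigvee_{i=1}^{n+m}\itp{B}{E_i}$, so I fix $E\in\pNNILpar$ with $\vdash E\to(B\to C)$ and work by contraposition, assuming $\nvdash E\to\bigvee_i\itp{B}{E_i}$. For each $i$ this yields a rooted Kripke model $\kcal_i$ (root $w_i^*$) with $\kcal_i\Vdash E$ and $\kcal_i,w_i^*\nVdash \itp{B}{E_i}$; when $E_i\not\in\parr\cup\{\bot\}$ I pass to a generated submodel at a node refuting $E_i$ so that WLOG the root of $\kcal_i$ satisfies $E\wedge B$ and refutes $E_i$.

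Next I use that $E\in\pNNILpar$ is an $\IPC$-component (\cref{component-extendible-prime}), so by \cref{Lemma-NNIL-normal-projective} there is a variant $\kcal$ of the sum $\sum(\{\kcal_i\}_i)$ — the disjoint union of the $\kcal_i$ under a fresh root $w_0$ — with $\kcal\Vdash E$. Then I define $\kcal'$ by deleting from $\kcal$ those copies of $\kcal_i$ for which $E_i\in\parr\cup\{\bot\}$; since $\kcal'\submodeli \kcal$ and $E\in\NNIL$, \cref{Theorem-NNIL-Submodel} guarantees $\kcal'\Vdash E$.

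The remaining step is to show $\kcal',w_0\Vdash B$ and $\kcal',w_0\nVdash C$, which contradicts $\vdash E\to(B\to C)$. The key auxiliary fact is $\kcal',w_0\nVdash E_i$ for every $i\leq n+m$. When $E_i\in\parr\cup\{\bot\}$ the corresponding $\kcal_i$ has been deleted, but the original $\kcal_i,w_i^*\nVdash E_i$ combined with persistence from $w_0$ to $w_i^*$ in the original $\kcal$ (which has the same valuation at $w_0$ as $\kcal'$) forces $\kcal,w_0\nVdash E_i$, hence $\kcal',w_0\nVdash E_i$; when $E_i\not\in\parr\cup\{\bot\}$ the node $w_i^*$ survives in $\kcal'$ and still refutes $E_i$, and persistence again transfers this to $w_0$. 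From $\kcal',w_0\nVdash E_j$ for $j\leq n$ the implications $E_j\to F_j$ hold vacuously at $w_0$, while above $w_0$ every surviving node lies in some $\kcal_i$ whose root already satisfies $B$; thus $\kcal',w_0\Vdash B$. From $\kcal',w_0\nVdash E_j$ for $n<j\leq n+m$ we get $\kcal',w_0\nVdash C$, which finishes the proof.

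The one delicate point, which I expect to be the main obstacle to state cleanly, is the constraint that the fresh root $w_0$ of the variant delivered by \cref{Lemma-NNIL-normal-projective} automatically refutes every parameter refuted at any $w_i^*$ (by persistence), while simultaneously the variant is free to adjust valuations on atomic variables so as to satisfy $E$; this is precisely why the Visser-style axiom $\VAR$ survives the restriction to $\parr$-substitutions and hence to $\NNILpar$-preservativity, so the argument is uniform with \cref{Lem-ARN-implies-arpn} once the identity substitution replaces the general $\theta$.
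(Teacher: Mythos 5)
Your proof is correct, but it takes a genuinely different route from the one in the paper. The paper's proof does not reduce to prime $E$ and does not glue models: it takes an arbitrary $E\in\NNILpar$ with $\nvdash E\to\bigvee_{i}\itp{B}{E_i}$, extracts a \emph{single} rooted countermodel $\kcal$ whose root $w_0$ forces $E$ and refutes every $\itp{B}{E_i}$ simultaneously, and then prunes $\kcal$ down to $w_0$ together with the upward cones of the witnesses $w_j$ (those $j$ with $E_j\notin\parr\cup\{\bot\}$); \cref{Theorem-NNIL-Submodel} keeps $E$ true at $w_0$ after pruning, the parameters and $\bot$ among the $E_i$ stay false at $w_0$ because the valuation there is untouched, and the remaining $E_j$ stay false by monotonicity from the surviving $w_j$, so $w_0$ forces $B$ (vacuously at $w_0$, and via $\kcal,w_j\Vdash B$ above) and refutes $C$. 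You instead pass to $E\in\pNNILpar$ via \cref{prnp=prpnp}, take a separate countermodel for each disjunct, and assemble them under a fresh root using \cref{component-extendible-prime} and \cref{Lemma-NNIL-normal-projective} before carrying out the same pruning — in effect the $\VAR$ case of \cref{Lem-ARN-implies-arpn} with $\theta$ the identity. Both arguments are sound, and your handling of the delicate point (persistence forces the fresh root to refute every parameter refuted above it, while the variant is free on variables) is exactly right; the trade-off is that your route imports the primality reduction and the extendibility machinery, whereas the paper's single-model pruning needs none of it, at the price of not exhibiting the uniformity with the admissibility soundness proof that your version makes explicit.
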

\begin{proof}
	We reason by  contraposition. Let $E\in\NNILpar$
	be such that $\nvdash E\to (\bigvee_{i=1}^{n+m}
	\itp{B}{E_i})$. Hence there is some finite rooted 
	$\kcal=(W,\pce,\V)$ such that 
	$\kcal,w_0\Vdash E$ and $\kcal,w_0\nVdash \bigvee_{i=1}^{n+m}	\itp{B}{E_i}$. Let $I$ be the set of indexes $i$ such that $E_i\in\parr$ or $E_i=\bot$.
	Also let $J$ be the complement of $I$. Thus 
	for every $i\in I$ we have $\kcal,w_0\nVdash E_i$
	and for every $j\in J$, there is some $w_j\sce w_0$
	such that $\kcal,w_j\Vdash B$ and 
	$\kcal,w_j\nVdash E_j$.
	Let $W'$ defined   as follows:
	
	\begin{equation*}
	W':=\{w_0\}\cup \{v\in W: \exists j\in J(w_j\pce v)\}
	\end{equation*}
	and  define $\kcal':=(W',\pce,\V)$.
	Then since $E\in\NNIL$, \Cref{Theorem-NNIL-Submodel}\footnote{Notice that the 
	statement of this theorem is actually independent of the finiteness or infiniteness of the set of atomic formulas in the language.} 
	implies $\kcal',w_0\Vdash E$. 
	Moreover, it is not difficult to observe that 
	$\kcal',w_0\Vdash B$ and $\kcal',w_0\nVdash C$. 
	Thus $\kcal',w_0\nVdash E\to (B\to C)$ and then 
	$ \nvdash E\to (B\to C)  $.
\end{proof}

\begin{lemma}\label{Lem-con1-ipc}
	$\ARNN\vdash A\rhd B$ implies $ A\prnpar B $. 
\end{lemma}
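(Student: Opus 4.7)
The plan is to proceed by induction on the derivation $\ARNN\vdash A\rhd B$, verifying that every axiom and rule of $\ARNN$ is sound for $\prnpar$. The underlying $\BART$-rules $\text{Ax}$, Conj, Cut are immediate from the definition of preservativity: $\vdash A\to B$ trivially yields $A\prnpar B$, and conjunction/cut properties are built into $\prnpar$. For Disj I would invoke the prime decomposition $\nnilpar\equiv\pnnilpar^\vee$ from \cref{Lem-nnil-normal-form}: given $E\in\nnilpar$ with $\vdash E\to(B\vee C)$, write $E$ as a disjunction $\bigvee_j E_j$ with each $E_j\in\pnnilpar$, use $\IPC$-primality of $E_j$ to route each to $B$ or $C$, and apply the induction hypothesis. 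For $\mont(\parr)$ I would use that $p\in\parr\subseteq\nnilpar$ and $\nnilpar$ is $\wedge$-closed, so $\vdash E\to(p\to A)$ rewrites to $\vdash(E\wedge p)\to A$ with $E\wedge p\in\nnilpar$, whence the induction hypothesis $A\prnpar B$ yields $\vdash(E\wedge p)\to B$ and thus $\vdash E\to(p\to B)$.

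The Visser axiom $\VAR$ is the most delicate $\ARN$-case. Rather than reprove it by the Kripke-semantic argument already present in the proof of \cref{Lem-ARN-implies-arpn}, I would chain together existing results: $\ARN$ proves $\VAR$, so \cref{Lem-ARN-implies-arpn} together with \cref{pnnilpv-nnilp} yields $(B\to C)\arpn\bigvee_i\itp{B}{E_i}$; then \cref{arnp-character} converts this to the corresponding $\prdnpar$-statement; and since every $\nnilpar$-formula is trivially $\nnilpar$-projective via the identity substitution, we have $\nnilpar\subseteq\dnnilpar$, so \cref{pres-admis-asce} downgrades $\prdnpar$ to $\prnpar$.

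The only genuinely new axiom in $\ARNN$ is the substitution schema $\VA$, asserting $A\prnpar\theta(A)$, and it is by far the easiest case. The crucial observation is the standing convention that substitutions fix parameters: since every $E\in\nnilpar$ is built only from parameters and the connectives $\wedge,\vee,\to$, we have $\theta(E)=E$ for every admissible $\theta$, so applying $\theta$ to a derivation $\vdash E\to A$ directly produces $\vdash E\to\theta(A)$. The main obstacle is organising the earlier machinery so that $\VAR$ is absorbed cleanly through the chain $\ARN\vdash{\cdot}\Rightarrow{\arpn}\Rightarrow{\prdnpar}\Rightarrow{\prnpar}$ rather than through a duplicated semantic argument; once that is in place, the induction is completely routine.
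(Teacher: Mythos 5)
Your proof is correct, and its overall skeleton (induction on the derivation, the observation that $\theta(E)=E$ for $E\in\NNILpar$ handles $\VA$, primality via $\NNILpar=\pNNILpar^\vee$ handles Disj, and the $\wedge$-closure argument handles $\mont(\parr)$) coincides with the paper's. The one genuine divergence is the Visser axiom. The paper proves $(B\to C)\prnpar\bigvee_i\itp{B}{E_i}$ directly in \cref{VA-validity} by a self-contained Kripke-model argument: given a countermodel to the conclusion, delete the branches witnessing the parametric/false disjuncts and use closure of $\NNIL$ under taking submodels (\cref{Theorem-NNIL-Submodel}) to keep $E$ forced while refuting $B\to C$. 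You instead chain $\ARN\vdash\VAR$ through \cref{Lem-ARN-implies-arpn} to get $\arn$, then through \cref{arnp-character} (really just the easy direction, \cref{pres-admis-rel}) to get $\prdnpar$, and finally downgrade to $\prnpar$ via the inclusion $\NNILpar\subseteq\dNNILpar$ (identity substitution) and \cref{pres-admis-asce}. This chain is sound and non-circular, since \cref{arnp-character} is established in the preceding subsection and does not depend on the present lemma; it buys you the absence of any new semantic argument, at the cost of resting the case on the heavier machinery of projective resolutions underlying \cref{arnp-character}, whereas the paper's direct argument is elementary and keeps \cref{Lem-con1-ipc} independent of that machinery. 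Either route is acceptable.
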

\begin{proof}
	We use induction on complexity of the proof 
	$ \ARNN\vdash A\rhd B $. All steps trivially hold except:
	\begin{itemize}[leftmargin=*]
		\item $\VA$: This axiom holds because $\IPC$
		is closed under substitutions and $ \theta(E)=E $ for every $ E\in\NNILpar $.
		\item $\VAR$: \Cref{VA-validity}.
		\item Disj: 
		Let $A\prnpar C$ and $B\prnpar C$ seeking to show
	    $A\vee B\prnpar C$.  By \Cref{prnp=prpnp} it is sufficient to show 
	    $A\vee B\prpnpar C$.
		Let $E\in \pNNILpar$ such that $\vdash E\to (A\vee B)$. 
		Since $E$ is  prime, either we have 
		$\vdash E\to A$ or $\vdash E\to B$. 
		Then by $A\prnpar C$ and $B\prnpar C$, 
		in either of the cases  we have  
		$\vdash E\to C$.		
		\qedhere
	\end{itemize}
\end{proof}

\section*{Acknowledgement}
We extend our heartfelt gratitude to the following individuals for their 
valuable contributions, stimulating discussions, and communications on topics of preservativity, unification, and admissibility: Mohammad Ardeshir,
Majid Alizadeh,
Philippe Balbiani, Lev Beklemishev, Silvio Ghilardi,
Rosalie Iemhoff,
Emil Jeřábek,
Dick de Jongh, Fedor Pakhomov, 
Konstantinos Papafilippou, Deniz Tahmouresi and Rineke Verbrugge. 
Their valuable insights have significantly enriched the quality of this research, 
and the author is deeply appreciative of their support.

Additionally, we would like to express our sincere appreciation to 
Lev Beklemishev, Albert Visser and the anonymous remainder of the referee team,  
for their diligent efforts in reviewing and enhancing various sections of the paper, 
as well as providing invaluable corrections and suggestions for improved notations.
Their meticulous feedback and suggestions have played a vital role in refining this work, 
making it more robust and comprehensive. We are immensely thankful for their guidance.

\section*{Funding}
This work is partially funded by FWO grant G0F8421N and BOF grant BOF.STG.2022.0042.01.

\newpage
\begin{appendices}
In the following table, we assume that $X$, $X_1,X_2,\ldots, X_n$ are arbitrary sets of formulas.
\renewcommand{\figurename}{Table}
\begin{figure}[h]
\bgroup
\def\arraystretch{1.8}
\begin{center}
\begin{tabular}{|c|c|c|}
    \hline
    \textbf{ Symbol} 
    & \textbf{Description}
    & \textbf{Section}
\\ \hline\hline 
     $\varr$
    & atomic variables
    &  \ref{lang}
\\ \hline
    $\parr$
    & atomic parameters
    & \ref{lang}
\\ \hline
     $\atom$
    & $\parr\cup\varr$
    &  \ref{lang}
\\ \hline    
     $\lcalz(X)$
    &  boolean combinations of formulas in $X$  
    & \ref{lang}
\\ \hline
     $\IPC$
    &  Intuitionistic Propositional Logic 
    &  \ref{lang}
\\ \hline    
     $\vdash$
    &  Derivability in Intuitionistic Logic 
    & \ref{lang}
\\ \hline 
	  $\Gamma^\vee$  
    & Disjunctive closure of $\Gamma$
    &   \ref{notation-set}
\\ \hline    
      $\NNIL$   
    &   set of formulas with No Nested Implications in the Left
    & \ref{notation-set}
\\ \hline
 ${\sf P}$ 
& 
 set of formulas $A$ such that $\IPC+A$ still has Disjunction Property
& \ref{notation-set} 
\\ \hline
	  $\Gamma(X)$
    &  $\Gamma\cap\lcalz(X)$
    &  \ref{notation-set} 
    \\ \hline
 $A$-identity
    &  \mspan{a substitution $\theta$ is $A$-identity if   
    $\theta$ is identity substitution, modulo}{$\IPC+A$-provable eqiovalence}
    &   \ref{Gamma-proj-nonmodal}
    \\ \hline    
   $  A\xratth E $   
    &  $\theta$ is $A$-identity and $\vdash \theta(A)\lr E$ and $E\in\lcalzpar$
    & \ref{Gamma-proj-nonmodal}
\\ \hline    
      $\darrow \Gamma$
    & $\{A:\exists\,\theta\exists\, E\in\Gamma(\parr)\ A\xratth E \}:=$
    the set of $\Gamma$-projective formulas
    & \ref{Gamma-proj-nonmodal}
\\ \hline	
$X_1\ldots X_n$
    & $X_1\cap \ldots\cap X_n$  
    &  \ref{notation-set}
\\ \hline
$\ap\Gamma {} A$
& the greatest lower bound  of $A$ in $\Gamma$ (in the intuitionistic logic)
& \ref{glb}
\\ \hline
	$A\adsm{}{\Gamma} B$ 
    &   $\forall\,\theta$
	 $\forall\, C\in\seventaghir{\Gamma(\parr)}$: 
	$ \vdash C\to\theta(A)$ 
	implies $ \vdash C\to \theta(B)$
    &\ref{pres-admis} 
\\ \hline
	$A\pres{}{\Gamma} B$ 
    &    
	 $\forall\, C\in\seventaghir{\Gamma}$: 
	$ \vdash C\to A$ 
	implies $ \vdash C\to B$
    &\ref{pres-admis}     
\\ \hline
    \end{tabular}
\caption[Table  of symbols and notations]{List of symbols and notations\label{Table-Axioms}}
\end{center}
\egroup
\end{figure}

\newpage

\noindent\textbf{\Large The system $\ARN$:} (see \cref{ARp})
\\[5mm]
\begin{equation*}
\itp{A}{B}:=\begin{cases}
B \quad &: B\in\parr\cup\{\bot\}\\
A\to B &: \text{otherwise}
\end{cases}
\end{equation*}
\\[5mm]
\textbf{Axioms}
\begin{itemize}[leftmargin=1.5cm]
	\item[${\sf Ax}:$] \quad $A\rhd B$, 
	for every $  \vdash A\to B$. 
\item[$\VAR:$]\quad $B\to C\rhd \bigvee_{i=1}^{n+m}
\itp{B}{E_i}$, in which  
$B=\bigwedge_{i=1}^n (E_i\to F_i)$ and 
$C=\bigvee_{i=n+1}^{n+m} E_i$.
\end{itemize}	
\textbf{Rules}
\begin{center}
	\bgroup
	\begin{tabular}{c c}
		\Ax{$A\rhd B$}
		\Ax{$A\rhd C$}
		\RLa{Conj}
		\BI{$A\rhd (B\wedge C)$}
		\DP \quad \quad \quad 
		&
		\Ax{$A\rhd B$}
		\Ax{$B\rhd C$}
		\RLa{Cut}
		\BI{$A\rhd C$}
		\DP  
		\\[5mm]
\Ax{$B\rhd A$}
		\Ax{$C\rhd A$}
		\RLa{Disj}
		\BI{$(B\vee C)\rhd A$}
		\DP
&
\Ax{$A\rhd B$}
\Ax{$p\in\parr$}
\RLa{$\mont(\parr)$}
\BI{$(p\to A)\rhd (p\to B)$}
\DP
	\end{tabular}
	\egroup
\end{center}
\vspace{5mm}
\noindent
\textbf{\Large The system $\ARNN$:} (see \cref{sec-pres-2})

\vspace{5mm}

All axioms and rules listed above plus the following axiom:
\begin{equation*}
\VA:  A\rhd \theta(A) \text{ for every substitution $ \theta $ (which by default is identity on parameters)}.
\end{equation*}
\end{appendices}

\end{document}